\documentclass[reqno,oneside,12pt]{amsart}
\usepackage{
  amsmath,
  amssymb,
  amsthm,
  geometry,
  paralist,
  thmtools,
  tikz,
  tikz-cd,
  verbatim,
  relsize, 
  bbm, 
  dsfont,
  mathtools
}

\usepackage[T1]{fontenc}

\usepackage{caption}
\usepackage{subcaption}
\usetikzlibrary{angles}

\renewcommand\labelenumi{(\roman{enumi})}
\renewcommand\theenumi\labelenumi

\usepackage{tcolorbox}

\RequirePackage{ifpdf}
\ifpdf
  \IfFileExists{pdfsync.sty}{\RequirePackage{pdfsync}}{}
  \RequirePackage[pdftex,
   colorlinks = true,
   urlcolor = myblue, 
   citecolor = mygreen, 
   linkcolor = myred, 
   bookmarksopen=true,
   unicode, psdextra]{hyperref}
\else
  \RequirePackage[hypertex, unicode, psdextra]{hyperref}
\fi
\hypersetup{
    colorlinks=true,
    linkcolor=blue,
    filecolor=magenta,      
    urlcolor=cyan,
    citecolor = blue
} 


\RequirePackage{ae, aecompl, aeguill} 

\usepackage{quiver}

\usepackage{cleveref} 

\usepackage{adjustbox}

\tikzcdset{
	arrows = crossing over
}


\usepackage[maxbibnames=99,
			maxalphanames=10, 
			style=alphabetic]{biblatex}
\addbibresource{ref.bib}

\usepackage{xurl}

\usepackage{dynkin-diagrams}

\usepackage[boxsize=.5em]{ytableau}

\usetikzlibrary{arrows.meta} 

\usepackage[textsize=scriptsize]{todonotes}
\presetkeys{todonotes}{inline}{}


\makeatletter
\providecommand\@dotsep{5}
\makeatother

\newcommand{\bbB}{\mathbb B}

\newcommand{\bbR}{\mathbb R}

\newcommand{\bbW}{\mathbb W}
\newcommand{\bbZ}{\mathbb Z}

\newcommand{\<}{\langle}
\renewcommand{\>}{\rangle}

\DeclareMathOperator{\id}{id}

\DeclareMathOperator{\sgn}{sgn}
\DeclareMathOperator{\even}{even}
\DeclareMathOperator{\FPdim}{FPdim}
\DeclareMathOperator{\reg}{reg}

\DeclareMathOperator{\ra}{\rightarrow}

\DeclareMathOperator{\Hom}{Hom}

\declaretheorem[numberwithin=section]{theorem}
\declaretheorem[sibling=theorem]{lemma}
\declaretheorem[sibling=theorem]{corollary}
\declaretheorem[sibling=theorem]{conjecture}

\declaretheorem[sibling=theorem]{proposition}

\declaretheorem[sibling=theorem, style=remark]{remark}
\declaretheorem[sibling=theorem, style=definition]{definition}
\declaretheorem[sibling=theorem, style=definition]{example}

\newtheorem*{theorem*}{Theorem}

\crefname{lemma}{Lemma}{Lemma}
  \crefname{corollary}{Corollary}{Corollary}
  \crefname{theorem}{Theorem}{Theorem}
  \crefname{definition}{Definition}{Definition}
   \crefname{proposition}{Proposition}{Proposition}
\crefname{question}{Question}{Question}
 \crefname{section}{Section}{Section} 
   \crefname{construction}{Construction}{Construction}
   \crefname{generalization}{Generalization}{Generalization}
  \crefname{construction}{Construction}{Construction}
  \crefname{notation}{Notation}{Notation}
   \crefname{example}{Example}{Example}
  \crefname{remark}{Remark}{Remark}
  \crefname{fact}{Fact}{Fact}
  \crefname{conjecture}{Conjecture}{Conjecture}
  \crefname{motivation}{Motivation}{Motivation}  
  \crefname{figure}{Figure}{Figure}  
  \crefname{assumption}{Assumption}{Assumption}



\setcounter{tocdepth}{1}


\begin{document}

\title[]{Representations of Coxeter groups over fusion rings and hyperplane complements}

\author[]{Edmund Heng}
\address{School of Mathematics and Statistics, University of Sydney, Australia}
\email{edmund.heng@sydney.edu.au}

\author[]{Luis Paris}
\address{Universit\'{e} Bourgogne Europe, CNRS, IMB, UMR 5584, 21000 Dijon, France}
\email{lparis@u-bourgogne.fr}

\keywords{}

\subjclass{}

\begin{abstract}
	We study faithful realisations of Coxeter groups over fusion rings and study Vinberg systems associated to them. 
    We show that they induce embeddings of hyperplane complements, which provide geometrical realisations of certain types of strong admissible (LCM) homomorphisms between Artin--Tits groups.
\end{abstract}

\maketitle

\tableofcontents

\section{Introduction}
\subsection{Hyperplane systems and Coxeter groups}
The geometric representation of a Coxeter group (also known as the Tits representation) is a fundamental tool in understanding Coxeter groups.
One of the many properties it provides us with is the realisation of Coxeter groups as real reflection groups in the sense of Vinberg \cite{Vinberg_reflectiongroups}, obtained through the associated contragredient representation.
Associated to this is a hyperplane system in the (open) Tits cone, where each chamber is labelled by an element of the Coxeter group, and each hyperplane corresponds to a reflection in the Coxeter group.
Not only does this give us a nice geometrical understanding of Coxeter groups, but the corresponding complexified hyperplane complement is the key object that relates Coxeter groups and Artin--Tits groups (also known as generalised braid groups) \cite{VdL_thesis}.

In this paper, we also study hyperplane systems associated with Coxeter groups, where the starting point is instead representations over a special family of rings known as \emph{fusion rings}.
The upshot of this consideration will be that these provide us with nice embeddings of hyperplane complements associated to different Coxeter groups that are related via LCM foldings \cite{Crisp99} and strong admissible foldings \cite{Muhlherr_CoxinCox,Castella_admissible}.

\subsection{Fusion rings}
Fusion rings are certain (unital, associative, but possibly non-commutative) rings that satisfy nice integrality and positivity structures (see \cref{defn:fusionring}).
They are central in the study of a variety of different subjects such as subfactor theory and conformal field theory; they also arise as Grothendieck rings of fusion categories, which are used to construct Turaev--Viro invariants for knots and 3-manifolds.

Our work stems from recent papers by the first-named author and co-authors \cite{heng2023coxeter,EH_fusionquivers,HL24}.
Briefly, while representations of quivers induce representations of Weyl groups defined over $\bbZ$, certain generalised representations of quivers in fusion categories extend this connection beyond Weyl groups to Coxeter groups -- i.e.\ including non-crystallographic types such as $H_3, H_4$ and $I_2(m)$.
Here, the representations of Coxeter groups obtained are instead defined over fusion rings $R$, which nonetheless satisfy similar properties as geometric representations over $\bbR$ -- we call these \emph{geometric representations over $R$}; see \cref{defn:geometricfusionrep} and cf.\ \cref{defn:geometricrealrep}.

\subsection{Main results}
As with geometric representations over $\bbR$, each geometric representation over a fusion ring $R$ induces a contragredient \emph{real representation} $\Theta$, on which $\bbW$ acts as a real reflection group in the sense of Vinberg; see \cref{sec:contragredientfusionrep}.
In fact, $\Theta$ is canonically isomorphic to the contragredient representation of a geometric $\bbR$-representation of $\bbW$.
The novel aspect comes from the integral structure of $R$, which allows us to associate a (typically larger) real vector space $\check{\Theta}$ containing $\Theta$, where $\bbW$ continues to act $\bbR$-linearly, albeit not necessarily as a reflection group (each generator acts as a product of reflections in $\check{\Theta}$).
Instead, there is an ``unfolded'' Coxeter group $\check{\bbW}$ that acts on $\check{\Theta}$ as a real reflection group, and the inclusion $\Theta \subseteq \check{\Theta}$ is $\bbW$-equivariant with respect to an injective group homomorphism $\varphi: \bbW \to \check{\bbW}$; see \cref{sec:unfoldcontragredientfusionrep} and \ref{sec:fusionreptoZrep}.\footnote{Both $\check{\Theta}$ and $\check{\bbW}$ depend on the $R$-representation $R\Lambda$.}

All in all, each geometric $R$-representation provides \emph{two} hyperplane systems.
On one hand, we have the hyperplane system $(T^\circ, \mathcal{A})$ associated to the action of $\bbW$ on $\Theta$; on the other, we have the hyperplane system $(\check{T}^\circ, \check{\mathcal{A}})$ associated to the action of $\check{\bbW}$ on $\check{\Theta}$.
Here, $T^\circ \subseteq \Theta$ and $\check{T}^\circ \subseteq \check{\Theta}$ denote the respective Tits cones, whereas $\mathcal{A}$ and $\check{\mathcal{A}}$ denote the respective sets of hyperplanes.
Our first result shows that the embedding $\Theta \subseteq \check{\Theta}$ restricts to an embedding of their respective real hyperplane complements:
\begin{theorem}[=\cref{thm:hyperplanesystem}]
    The set of hyperplanes $\mathcal{A}$ can be obtained from $\check{\mathcal{A}}$ as follows:
    \[
    \mathcal{A} = \{ \check{H} \cap \Theta \mid \check{H} \in \check{\mathcal{A}}, \ \check{H}\cap T^\circ \neq \emptyset \}.
    \]
    Moreover, $T^\circ \subseteq \check{T}^\circ$, and so the inclusion $\Theta\subseteq \check{\Theta}$ restricts to an embedding of hyperplane complements
    \[
    T^\circ \setminus \bigcup_{H \in \mathcal{A}} H \subseteq \check{T}^\circ \setminus \bigcup_{\check{H} \in \check{\mathcal{A}}} \check{H}.
    \]
\end{theorem}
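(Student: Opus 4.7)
The plan is to reduce everything to the reflection-group structures of $\bbW$ and $\check{\bbW}$. By Vinberg's theory, $\mathcal{A}$ consists exactly of the fixed hyperplanes in $\Theta$ of reflections in $\bbW$, and similarly $\check{\mathcal{A}}$ for $\check{\bbW}$ acting on $\check{\Theta}$. The bridge between the two is the observation, implicit in the construction of the unfolded representation, that for each Coxeter generator $s_i \in \bbW$ the image $\varphi(s_i) \in \check{\bbW}$ is a product of pairwise commuting reflections $\check{s}_{i,1}, \dots, \check{s}_{i,k_i}$, whose fixed hyperplanes $\check{H}_{i,1}, \dots, \check{H}_{i,k_i} \in \check{\mathcal{A}}$ all restrict to the same hyperplane of $\Theta$, namely $\mathrm{Fix}_\Theta(s_i)$. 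I would verify this identity directly at the level of the (co)root data of the geometric $R$-representation and its integral unfolded lift.

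First I would establish the Tits cone inclusion $T^\circ \subseteq \check{T}^\circ$. The closed fundamental chamber $\bar{C} \subseteq \Theta$ is cut out by non-negativity of the fundamental coroots of $\bbW$, and $\bar{\check{C}} \subseteq \check{\Theta}$ by those of $\check{\bbW}$. Each fundamental coroot of $\bbW$ is, by the folding data, a sum of fundamental coroots of $\check{\bbW}$ within its block, so $\bar{C} \subseteq \bar{\check{C}}$; this propagates to $T^\circ \subseteq \check{T}^\circ$ using $\bbW$-equivariance of $\Theta \subseteq \check{\Theta}$ together with the injection $\varphi : \bbW \hookrightarrow \check{\bbW}$.

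For the hyperplane equality, the forward inclusion is immediate from the first paragraph: every $H \in \mathcal{A}$ is $w \cdot \mathrm{Fix}_\Theta(s_i) = \varphi(w)\check{H}_{i,j} \cap \Theta$ for any $j$, and manifestly meets $T^\circ$. For the reverse, suppose $\check{H} \in \check{\mathcal{A}}$ meets $T^\circ$; pick $x \in \check{H} \cap T^\circ$ and write $x = w \cdot y$ with $w \in \bbW$, $y \in \bar{C} \subseteq \bar{\check{C}}$. Then $\varphi(w)^{-1} \check{H}$ is a hyperplane of $\check{\mathcal{A}}$ passing through a point of $\bar{\check{C}}$; by the classical characterization of walls of a fundamental chamber for a Tits cone, $\varphi(w)^{-1} \check{H}$ must be a wall of $\bar{\check{C}}$, i.e.\ one of the $\check{H}_{i,j}$. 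Hence $\check{H} \cap \Theta = w \cdot \mathrm{Fix}_\Theta(s_i) \in \mathcal{A}$. The hyperplane-complement embedding then follows formally: any point of $T^\circ \setminus \bigcup_{H \in \mathcal{A}} H$ lies in $\check{T}^\circ$ by the first inclusion, and is in no $\check{H} \in \check{\mathcal{A}}$ by the description just proved.

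I expect the main obstacle to lie in the first paragraph -- carefully identifying, at the level of (co)root data, the fundamental reflections $\check{s}_{i,j}$ of $\check{\bbW}$ whose product is $\varphi(s_i)$, and verifying that their fixed hyperplanes all meet $\Theta$ in the common hyperplane $\mathrm{Fix}_\Theta(s_i)$. The remaining arguments then become a standard application of Vinberg's theory, with one delicate point in the converse of the hyperplane equality: a hyperplane of $\check{\mathcal{A}}$ passing through a boundary point of $\bar{\check{C}}$ must actually be a wall of $\bar{\check{C}}$, a classical fact about Tits cones that I would invoke rather than reprove.
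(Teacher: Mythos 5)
Your reverse inclusion contains a genuine gap. The ``classical characterization'' you invoke --- that a hyperplane of $\check{\mathcal{A}}$ passing through a point of the closed fundamental chamber $\bar{\check{C}}$ must be a \emph{wall} of $\bar{\check{C}}$ --- is false. The correct classical fact is only that the corresponding reflection lies in the standard parabolic subgroup $\check{\bbW}_{\check{J}(y)}$ stabilising that point; its hyperplane need not be a wall. (Already in type $A_2$ the hyperplane of $sts$ passes through the tip of the cone, a point of $\bar{\check{C}}$, without being a wall.) This is not a repairable imprecision here: \cref{eg:hyperplaneoutsideTitscone} exhibits exactly such a non-wall $\check{H}\in\check{\mathcal{A}}$ meeting $\bar{\check{C}}\cap\Theta$ whose trace $\check{H}\cap\Theta$ is \emph{not} in $\mathcal{A}$. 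Your argument only uses that $x=w\cdot y$ with $y\in\bar{C}$, i.e.\ that $x\in\check{H}\cap T$, so it would ``prove'' the statement for that example as well; the hypothesis $\check{H}\cap T^\circ\neq\emptyset$ must enter in an essential way and it does not in your write-up. The paper's route is different: it first proves only the weak statement that every point of $\check{H}\cap T$ lies on \emph{some} member of $\mathcal{A}$ (via the stabiliser-parabolic argument you gesture at), and then upgrades this using local finiteness of $\mathcal{A}$ in $T^\circ$: if no $H\in\mathcal{A}$ equalled $\check{H}\cap\Theta$, the codimension-one convex cone $\check{H}\cap T^\circ$ would have to be covered by the codimension-$\geq 2$ sets $H\cap\check{H}$, forcing infinitely many hyperplanes of $\mathcal{A}$ near some point of $T^\circ$, a contradiction. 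Some argument of this kind (or an explicit root-theoretic identification) is needed to close your gap.

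A secondary issue is the Tits cone inclusion. From $\bar{C}\subseteq\bar{\check{C}}$ you get $T\subseteq\check{T}$, but the interiors $T^\circ$ and $\check{T}^\circ$ are taken in ambient spaces of different dimensions ($\Theta$ is a proper subspace of $\check{\Theta}$), so $T\subseteq\check{T}$ does not imply $T^\circ\subseteq\check{T}^\circ$; a priori $\Theta$ could lie in the boundary of $\check{T}$. What saves the day is that the \emph{open} chamber inclusion $C^\circ\subseteq\check{C}^\circ$ holds (because $Z(b\alpha_s)=\FPdim(b)Z(\alpha_s)$ with $\FPdim(b)\geq 1$, by \cref{prop:interaction}), combined with the fact that $T^\circ$ is the convex hull of $\bigcup_{w}w\cdot C^\circ$ (\cref{lem:Titsconeconvexhull}); your sketch should be routed through these two points rather than through the closed chambers. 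The forward inclusion of the hyperplane identification and the final deduction of the complement embedding are fine and agree with the paper.
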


In particular, every hyperplane $\check{H} \in \mathcal{\check{A}}$ intersects with $\Theta$ to give a hyperplane $\check{H}\cap \Theta = H \in \mathcal{A}$ -- unless $\check{H} \cap \Theta$ lives outside of the Tits cone $T^\circ$.
Note that if $\bbW$ is finite, then necessarily $T^\circ = \Theta$ and hence all the hyperplanes in $\check{\mathcal{A}}$ intersect $\Theta$ to give the hyperplanes in $\mathcal{A}$.
An example of an embedding obtainable from such consideration is given in \cref{fig:pentagonunfold}, where  $\bbW \cong D_5$ (type $I_2(5)$) and $\check{\bbW} \cong S_5$ (type $A_4$).

\begin{figure}[h]
\footnotesize
\begin{tikzpicture}
\def\R{1.5}

\def\L{1.5}

\draw[thick]
    (0:\R) -- (72:\R) -- (144:\R) -- (216:\R) -- (288:\R) -- cycle;

\foreach [count=\x] \angle in {0, 72, 144, 216, 288} {
	\node (p\x) at (\angle:{\R*\L}) {};
	\node (p'\x) at (180+\angle:{\R*\L}) {};
}

\foreach \x in {1,...,5} {
	\draw[red] (p\x) -- (p'\x);
}

\node[right] at (p1) {$\check{H}_{\alpha_{1}} \cap \check{H}_{\alpha_{3}}$};
\node[right] at (p'4) {$\check{H}_{\alpha_{4}} \cap \check{H}_{\alpha_{2}}$};
\node[right] at (p2) {$\check{H}_{\alpha_{1} + \alpha_{2}} \cap \check{H}_{\alpha_{2} + \alpha_{3} + \alpha_{4}}$};
\node[right] at (p5) {$\check{H}_{\alpha_{3} + \alpha_{2}} \cap \check{H}_{\alpha_{1} + \alpha_{2} + \alpha_{3} + \alpha_{4}}$};
\node[right] at (p'3) {$\check{H}_{\alpha_{3} + \alpha_{4}} \cap \check{H}_{\alpha_{1} + \alpha_{2} + \alpha_{3}}$};

\end{tikzpicture}
\caption{\footnotesize The hyperplane complement $T^\circ \setminus \bigcup_{H \in \mathcal{A}} H = \Theta \setminus \bigcup_{H \in \mathcal{A}} H$ for $\bbW\cong D_5$, with hyperplanes $\check{H}_? \in \check{\mathcal{A}}$ associated to $\check{\bbW} \cong S_5$ indicated. The 10 hyperplanes for $\check{\bbW}$ (labelled by the positive roots) intersect in pairs to give the 5 hyperplanes (lines) for $\bbW$ in $\Theta$.}
\label{fig:pentagonunfold}
\end{figure}
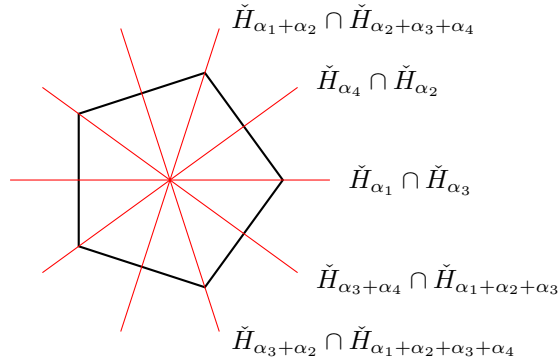

Now let $\Omega_{\reg}$ and $\check{\Omega}_{\reg}$ be the corresponding complexified hyperplane complements associated to $(T^\circ, \mathcal{A})$ and $(\check{T}^\circ, \check{\mathcal{A}})$ respectively.
By the result of Van der Lek \cite{VdL_thesis}, the fundamental groups $\pi_1(\Omega_{\reg}/\bbW)$ and $\pi_1(\check{\Omega}_{\reg}/\check{\bbW})$ are the Artin--Tits groups of $\bbW$ and $\check{\bbW}$ respectively.
\begin{theorem}[=\cref{thm:hyperplanecompembedding}]
    The embedding $\Theta \subseteq \check{\Theta}$ induces an embedding on the complexified hyperplane complements $\Omega_{\reg} \subseteq \check{\Omega}_{\reg}$.
    The induced map on fundamental groups $\tilde{\varphi}: \pi_1(\Omega_{\reg}/\bbW) \to \pi_1(\check{\Omega}_{\reg}/\check{\bbW})$ lifts the homomorphism $\varphi$; i.e.\ the following diagram commutes:
    \[
    \begin{tikzcd}
        \pi_1(\Omega_{\reg}/\bbW) \ar[r,"\tilde{\varphi}"] \ar[d, two heads] & 
            \pi_1(\check{\Omega}_{\reg}/\check{\bbW}) \ar[d, two heads] \\
        \bbW \ar[r,"\varphi"] &
            \check{\bbW}.
    \end{tikzcd}
    \]
\end{theorem}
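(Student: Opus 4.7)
The plan is to complexify the real inclusion furnished by \cref{thm:hyperplanesystem} and then invoke naturality of the monodromy surjection $\pi_1(X/G) \twoheadrightarrow G$ under an equivariant inclusion of $G$-spaces. The essential inputs are \cref{thm:hyperplanesystem} (the real embedding of hyperplane systems) and the $\bbW$-equivariance of $\Theta \subseteq \check{\Theta}$ with respect to $\varphi$, already established earlier in the paper.

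The first step is to check that the real embedding passes to the complexified complements. Writing $\Omega$ and $\check{\Omega}$ for the complexified open Tits cones in $\Theta_\bbC$ and $\check{\Theta}_\bbC$ respectively, the inclusions $\Theta \subseteq \check{\Theta}$ and $T^\circ \subseteq \check{T}^\circ$ from \cref{thm:hyperplanesystem} give $\Omega \subseteq \check{\Omega}$. For the hyperplane part, the bookkeeping splits into two cases for each $\check{H} \in \check{\mathcal{A}}$: if $\check{H} \cap T^\circ \neq \emptyset$, then \cref{thm:hyperplanesystem} yields a hyperplane $H \in \mathcal{A}$ with $\check{H} \cap \Theta = H$, whence $\check{H}_\bbC \cap \Theta_\bbC = H_\bbC$; if instead $\check{H} \cap T^\circ = \emptyset$, then $\check{H}_\bbC \cap \Omega = \emptyset$, since $\check{H}$ being defined over $\bbR$ forces both the real and imaginary parts of any point in $\check{H}_\bbC$ to lie in $\check{H}$, while any point of $\Omega$ has one such part in $T^\circ$. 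Combining the two cases gives $\Omega_{\reg} = \Omega \cap \check{\Omega}_{\reg}$, and hence the desired inclusion $\Omega_{\reg} \subseteq \check{\Omega}_{\reg}$.

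This inclusion is $\bbW$-equivariant (with $\bbW$ acting on $\check{\Omega}_{\reg}$ through $\varphi$), so it descends to a continuous map $\iota \colon \Omega_{\reg}/\bbW \to \check{\Omega}_{\reg}/\check{\bbW}$; set $\tilde{\varphi} := \iota_*$, after fixing a basepoint $y_0 \in \Omega_{\reg}$ and using its image in both quotients. Commutativity of the square then follows from naturality of the Van der Lek monodromy: the right surjection sends a class $[\alpha] \in \pi_1(\check{\Omega}_{\reg}/\check{\bbW})$ to the unique $\check{w} \in \check{\bbW}$ such that the lift of $\alpha$ to $\check{\Omega}_{\reg}$ starting at $y_0$ ends at $\check{w}\cdot y_0$, and analogously for the left. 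Given $[\gamma] \in \pi_1(\Omega_{\reg}/\bbW)$ mapping to $w \in \bbW$, lift $\gamma$ to a path in $\Omega_{\reg}$ ending at $w \cdot y_0$; pushing this lift forward via $\Omega_{\reg} \hookrightarrow \check{\Omega}_{\reg}$ and invoking $\varphi$-equivariance produces a path in $\check{\Omega}_{\reg}$ from $y_0$ to $\varphi(w) \cdot y_0$, which is then the unique lift of the loop $\iota \circ \gamma$ representing $\tilde{\varphi}([\gamma])$. The right surjection therefore sends $\tilde{\varphi}([\gamma])$ to $\varphi(w)$, as required.

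The main subtlety I expect lies entirely in the first step: ensuring that complexification does not \emph{revive} a hyperplane $\check{H}$ whose real locus avoids $T^\circ$. The short argument sketched above is the one place where \cref{thm:hyperplanesystem} must be augmented rather than merely invoked; everything downstream is formal, combining $\varphi$-equivariance with functoriality of $\pi_1$ and the standard monodromy description of the deck group for the orbit-space covering provided by Van der Lek.
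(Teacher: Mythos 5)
Your proof is correct for the statement as given, but it reaches the fundamental-group claim by a genuinely different route from the paper's. On the embedding $\Omega_{\reg} \subseteq \check{\Omega}_{\reg}$ you and the paper agree in substance: the paper first establishes $T^\circ \subseteq \check{T}^\circ$ via $C^\circ \subseteq \check{C}^\circ$ and \cref{lem:Titsconeconvexhull} (note that this containment is proved \emph{inside} the paper's proof of \cref{thm:hyperplanecompembedding} rather than in the body's statement of \cref{thm:hyperplanesystem}, so you should cite \cref{prop:interaction} and \cref{lem:Titsconeconvexhull} for it rather than \cref{thm:hyperplanesystem}), and then simply asserts that the complexified embedding follows from \cref{thm:hyperplanesystem}; your two-case bookkeeping (in the paper's model $\Omega_{\reg}=(T^\circ\times T^\circ)\setminus\bigcup_H (H\times H)$, a pair lies in $\check{H}\times\check{H}$ only if both coordinates lie in $\check{H}$, which is impossible when $\check{H}\cap T^\circ=\emptyset$) is exactly the detail being glossed over, and it is right. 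The real divergence is in the second half. You deduce commutativity of the square abstractly from $\varphi$-equivariance together with the monodromy description of the surjections $\pi_1(\Omega_{\reg}/\bbW)\twoheadrightarrow\bbW$ and $\pi_1(\check{\Omega}_{\reg}/\check{\bbW})\twoheadrightarrow\check{\bbW}$; this is legitimate, since both actions are free and properly discontinuous and the complements are path-connected. The paper instead computes $\tilde{\varphi}$ on generators, constructing explicit paths $\check{\alpha}_{(b,s)},\check{\beta}_{(b,s)}$ and square homotopies to show $\tilde{\varphi}(\sigma_s)=\prod_{b\in\mathfrak{B}}\sigma_{(b,s)}$, from which commutativity of the diagram is immediate. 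Your argument is shorter and cleaner for the diagram alone, but it determines $\tilde{\varphi}(\sigma_s)$ only modulo the kernel of $\pi_1(\check{\Omega}_{\reg}/\check{\bbW})\twoheadrightarrow\check{\bbW}$, i.e.\ up to a pure-braid factor; it therefore does not recover the stronger formulation of \cref{thm:hyperplanecompembedding} in the body of the paper, which is what is actually needed later to identify $\tilde{\varphi}$ with the strong admissible (LCM) homomorphism in \cref{sec:partitionfoldingLCM}.
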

The maps $\varphi$ and $\tilde{\varphi}$ are expected to be induced from a strong admissible partition on the Coxeter graph of $\check{\bbW}$ in the sense of \cite{Muhlherr_CoxinCox,Castella_admissible}, which generalises LCM foldings studied in \cite{Crisp99}.
In the case of categorifiable fusion rings $R$, this is a consequence of results in \cite{EH_fusionquivers}; see \cref{thm:unfoldisLusztig}.
Moreover, one may choose fusion rings $R$ so that $\varphi$ and $\tilde{\varphi}$ agree with (a variant of) LCM homomorphisms used in \cite{CrispParis_2001} to prove the Tits conjecture, and in \cite{Paris_monoidinject} to prove that the Artin--Tits monoid injects into its group; see \cref{sec:partitionfoldingLCM}.


\subsection*{Acknowledgements}
EH would like to thank the IH\'ES, where part of this work was carried out. EH would also like to thank Universit\'e Bourgogne Europe for the research visit, which resulted in this collaboration -- special thanks goes to Federica Gavazzi who made this visit possible.

\section{Preliminaries}
\subsection{Coxeter systems and realisations}
We begin by recalling the definition of a Coxeter system and its associated Artin--Tits group.
\begin{definition} \label{defn:Coxsystem}
Let $S$ be a finite set.
A \emph{Coxeter matrix} over $S$ is a symmetric square matrix $(m_{s,t})_{s,t \in S}$ with diagonal $m_{s,s} = 1$ for all $s\in S$ and $m_{s,t} =m_{t,s} \in \{2,3,..., \infty\}$ for $s \neq t$. 
The corresponding \emph{Coxeter graph} $\Gamma$ of a Coxeter matrix $(m_{s,t})_{s,t \in \Gamma_0}$ is the simplicial graph with set of vertices $\Gamma_0 = S$ and set of labelled edges $\Gamma_1$, where each pair of vertices $s$ and $t$ with $m_{s,t} \geq 3$ is connected by a (unique) edge labelled by $m_{s,t}$ (including $\infty$); two distinct vertices $s$ and $t$ that are not connected by an edge has $m_{s,t} = 2$.
The \emph{Coxeter group} $\bbW \coloneqq \bbW(\Gamma)$ of a Coxeter graph $\Gamma$ is the group with the group presentation:
\[
\bbW = \< S  \mid \forall s,t \in S: s^2 = 1, (st)^{m_{s,t}} = 1\>.
\]
By convention, when $m_{s,t} = \infty$ there are no relations between $s$ and $t$.
The pair $(\bbW,S)$ is also known as a \emph{Coxeter system}.
%

\noindent
The \emph{Artin--Tits group}, or the \emph{generalised braid group}, $\bbB\coloneqq \bbB(\Gamma)$ of a Coxeter graph $\Gamma$ is the group with the group presentation:
\[
\bbB = \< \sigma_s, \  s \in S  \mid \forall s,t \in S: \underbrace{\sigma_s \sigma_t \sigma_s ...}_{m_{s,t} \text{ times}} = \underbrace{\sigma_t \sigma_s \sigma_t ...}_{m_{s,t} \text{ times}} \>.
\]
As before, there are no relations between $\sigma_s$ and $\sigma_t$ when $m_{s,t} = \infty$.
\end{definition}

Throughout this paper, all rings considered will be unital and associative; however, they need not be commutative.
Ring homomorphisms are always unital.
The following definition is adapted from the usual definition of realisations (see e.g.\ \cite{elias_2015}), but we shall allow for non-commutative rings $R$ equipped with an anti-involution; the usual definition for commutative rings can be recovered by choosing the anti-involution to be the identity.
\begin{definition}
Let $R$ be a ring equipped with an anti-involution $?^*:R\to R$.
Let $(\bbW,S)$ be a Coxeter system.
Consider the free left $R$-module of rank $|S|$:
\[
R\Lambda_S := \bigoplus_{s \in S} R\cdot \alpha_s.
\]
A \emph{realisation over $R$}, or an \emph{$R$-realisation}, of $(\bbW, S)$ is a $\bbZ$-bilinear form on $R\Lambda_S$
\[
B_R(-,-): R\Lambda_S \times R\Lambda_S \ra R
\]
satisfying
\begin{enumerate}
\item $B_R(x \cdot \alpha_s, y\cdot \alpha_t) = yB_R(\alpha_s,\alpha_t)x^*$;
\item $B_R(\alpha_s, \alpha_s) = 2$ for all $s \in S$; and
\item for each $s \in S$ and $v \in R\Lambda_S$, the assignment
\begin{equation} \label{eqn:realisationrepresentation}
s(v) := v - B_R(\alpha_s,v)\cdot \alpha_s
\end{equation}
induces a well-defined left $R$-linear action of $\bbW$ on $R\Lambda_S$.
\end{enumerate}
The $|S|\times |S|$ matrix $(r_{s,t})_{s,t\in S}$ with entries $r_{s,t} \coloneqq B_R(\alpha_s, \alpha_t) \in R$ is called the \emph{Cartan matrix} and $B_R(-,-)$ is called a \emph{$R$-sesquilinear form}.
We say the realisation is \emph{faithful} if the representation defined by \eqref{eqn:realisationrepresentation} is a faithful left $R$-linear representation of $\bbW$.
\end{definition}
\begin{remark}[Non-commutative subtlety in matrix representations]
    If we adhere to the usual matrix multiplication rule, matrices have to act on row vectors in order for the action to be \emph{left} $R$-linear for non-commutative rings $R$.
    
    More precisely, fix an order on $S$ so that $\{\alpha_s\}_{s\in S}$ forms an ordered basis for $R\Lambda_S$ and let $A$ denote the Cartan matrix obtained with respect to this order.
    Given $v \in R\Lambda_S$, let $[v]$ denote its \emph{row vector} (not column) with entries in $R$ with respect to this ordered basis. Then we recover the sesquilinear form from $A$ via
    \begin{equation} \label{eqn:cartantobilinear}
    B_R(u,v) = [v]A[u]^\dagger,
    \end{equation}
    where $[u]^\dagger$ is the *-transpose of $[u]$, i.e.\ it is the transpose of $[u]$ with anti-involution $?^*$ applied to all entries.
    The matrix associated with the action of each $s \in S$ is given by $[s] \coloneqq \id - A[\alpha_s]^\dagger[\alpha_s]$, where $\id$ is the identity matrix -- importantly, these matrices act on row vectors, hence act on the right: $[s(v)] = [v][s]$.
    In particular, one checks that the matrix $[ts]$ representing the action of $ts$ is given by $[s][t]$, so that
    \[
    [ts(v)] = [v][ts] = [v][s][t].
    \]
    These subtleties can be ignored if $R$ is commutative.
\end{remark}

\subsection{Realisations over \texorpdfstring{$\bbR$}{real numbers}}
Every Coxeter system $(\bbW,S)$ carries a faithful realisation over the real numbers $\bbR$ -- with identity as involution -- given by the symmetric $\bbR$-bilinear form:
\begin{align*}
B_\bbR(-,-)&: \bbR\Lambda_S \times \bbR\Lambda_S \ra \bbR \\
B_\bbR(\alpha_s,\alpha_t) &= -2\cos(\pi/m_{s,t}),
\end{align*}
where by convention, $-2\cos(\pi/\infty) = -2$.
The corresponding representation of $\bbW$ on $\bbR\Lambda_S$ is commonly known as the \emph{Tits representation}.

In fact, we can generalise this slightly, where for $s, t \in S$ with $m_{s,t} = \infty$ we just require that $B_\bbR(\alpha_s,\alpha_t) = B_\bbR(\alpha_t,\alpha_s) \leq -2$.
This would still define a faithful realisation of $(\bbW,S)$ over $\bbR$.
We record this result for later purposes.
\begin{proposition}[see e.g.\ \protect{\cite[Theorem 4.2.7]{bjorner_anders_brenti_2010}}] \label{prop:realfaithfulrealisation}
Let $(\bbW,S)$ be a Coxeter system.
Consider the bilinear form on $\bbR\Lambda_S$ associated to a $|S|\times|S|$ symmetric real matrix $(r_{s,t})_{s,t\in S}$ satisfying the following conditions:
	\begin{itemize}
    \item for all $s\in S$, $r_{s,s} = 2$;
	\item if $2 \leq m_{s,t} < \infty$, $r_{s,t} = -2\cos(\pi/m_{s,t})$;
	\item if $m_{s,t} = \infty$, $r_{s,t} \leq -2$.
	\end{itemize}
Then the induced representation of $\bbW$ on $\bbR\Lambda_S$ and the corresponding contragredient representation of $\bbW$ on $\Hom_\bbR(\bbR\Lambda_S,\bbR)$ are both faithful.
In particular, $\bbR\Lambda_S$ faithfully realises $(\bbW,S)$ over $\bbR$.
\end{proposition}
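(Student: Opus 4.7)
The plan is to follow the standard Tits-representation proof (as in \cite[Theorem 4.2.7]{bjorner_anders_brenti_2010}), with a small adaptation to accommodate the freedom $r_{s,t} \leq -2$ allowed when $m_{s,t} = \infty$.

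First I would verify that the formula \eqref{eqn:realisationrepresentation} extends to a well-defined representation $\rho: \bbW \to \Aut_\bbR(\bbR\Lambda_S)$. The relations $s^2 = 1$ hold because $r_{s,s} = 2$ forces $\rho(s)$ to be an involution. For $s \neq t$, both $\rho(s)$ and $\rho(t)$ fix a common complement of the invariant subspace $V_{s,t} \coloneqq \bbR\alpha_s \oplus \bbR\alpha_t$ pointwise, so the braid relation $(st)^{m_{s,t}} = 1$ (for $m_{s,t} < \infty$) reduces to a two-dimensional computation: the matrix of $\rho(s)\rho(t)|_{V_{s,t}}$ has determinant $1$ and trace $r_{s,t}^2 - 2 = 2\cos(2\pi/m_{s,t})$, so its eigenvalues are $e^{\pm 2\pi i/m_{s,t}}$, confirming $(\rho(s)\rho(t))^{m_{s,t}} = \id$. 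When $m_{s,t} = \infty$ there is no braid relation to verify.

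I would then establish faithfulness of the contragredient representation $\rho^\vee$ on $V^* \coloneqq \Hom_\bbR(\bbR\Lambda_S,\bbR)$ by the classical fundamental-chamber argument. Setting $C \coloneqq \{f \in V^* \mid f(\alpha_s) > 0 \text{ for all } s \in S\}$, one proves by induction on the Coxeter length $\ell(w)$ that $\rho^\vee(w)(C) \cap C = \emptyset$ whenever $w \neq 1$, which forces $\ker \rho^\vee = \{1\}$. The inductive step is driven by the root-positivity lemma: for any reduced expression $w = s_1 \cdots s_k$ the root $\rho(s_1 \cdots s_{k-1})(\alpha_{s_k})$ lies in the non-negative cone generated by $\{\alpha_s \mid s \in S\}$. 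This in turn reduces to a two-dimensional claim inside each dihedral subgroup $\<s,t\>$: every nontrivial element of $\<\rho(s),\rho(t)\>$ sends the non-negative cone of $V_{s,t}$ either into itself or into its negative. Faithfulness of $\rho$ itself follows at once from that of $\rho^\vee$, since the correspondence $\rho(w) \mapsto (\rho(w)^T)^{-1}$ is injective on $\Aut_\bbR(\bbR\Lambda_S)$, so the two representations share the same kernel.

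The main (and only) technical obstacle lies in verifying the above dihedral claim in the novel case $m_{s,t} = \infty$ with $r_{s,t} \leq -2$, where the bilinear form on $V_{s,t}$ is no longer positive semi-definite. A direct calculation shows that $\rho(s)\rho(t)|_{V_{s,t}}$ has characteristic polynomial $\lambda^2 - (r_{s,t}^2 - 2)\lambda + 1$, and since $r_{s,t}^2 - 2 \geq 2$ its eigenvalues are positive reals $\mu, \mu^{-1}$ with $\mu \geq 1$; the boundary case $\mu = 1$ yields a non-trivial Jordan block. In particular $\rho(s)\rho(t)$ has infinite order, and inspecting the corresponding (generalised) eigenvectors confirms that the non-negative cone of $V_{s,t}$ is preserved with the required sign behaviour, so that $\<\rho(s),\rho(t)\>$ faithfully realises the infinite dihedral group. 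This completes the inductive step, and hence the proof.
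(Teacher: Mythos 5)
The paper offers no proof of this proposition --- it is quoted from \cite[Theorem 4.2.7]{bjorner_anders_brenti_2010} --- and your overall architecture (well-definedness via the rank-two trace and eigenvalue computation on $V_{s,t}$, then faithfulness of the contragredient representation by the fundamental-chamber induction on length, reduced to a dihedral lemma, and finally $\ker\rho=\ker\rho^\vee$) is precisely the standard proof behind that citation. The trace computation, the observation that only the case $m_{s,t}=\infty$, $r_{s,t}\leq -2$ is non-classical, and the eigenvalue analysis showing $\rho(s)\rho(t)$ has infinite order (with a Jordan block when $r_{s,t}=-2$) are all correct.

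However, the step you single out as the only technical obstacle is both misstated and not actually carried out. The dihedral claim ``every nontrivial element of $\langle\rho(s),\rho(t)\rangle$ sends the non-negative cone of $V_{s,t}$ either into itself or into its negative'' is false, already in the classical finite case: for any $r_{s,t}<0$ one has $\rho(s)\rho(t)(\alpha_s)=(r_{s,t}^2-1)\alpha_s-r_{s,t}\alpha_t$, which lies in the non-negative cone, while $\rho(s)\rho(t)(\alpha_t)=r_{s,t}\alpha_s-\alpha_t$ lies in the non-positive cone, so $\rho(s)\rho(t)$ maps the cone neither into itself nor into its negative (take $m_{s,t}=3$, $r_{s,t}=-1$ for a concrete instance). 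For the same reason the concluding assertion that for $r_{s,t}\leq-2$ ``the non-negative cone of $V_{s,t}$ is preserved'' is false, and the spectral data of $\rho(s)\rho(t)$ alone does not deliver the positivity statement your induction needs. The correct rank-two lemma is length-sensitive: for $w$ in the dihedral parabolic and $u\in\{s,t\}$ with $\ell(wu)>\ell(w)$, the vector $\rho(w)(\alpha_u)$ is a non-negative combination of $\alpha_s$ and $\alpha_t$ (dually, $\rho^\vee(w)(C_0)$ lies in $\{f:f(\alpha_s)>0\}$ exactly when $\ell(sw)>\ell(w)$). For $m_{s,t}=\infty$ and $r\coloneqq r_{s,t}\leq-2$ this is proved by expressing the coefficients of $(st)^k(\alpha_s)$, $(st)^{k}s(\alpha_t)$, etc.\ as Chebyshev-type polynomials in $-r$ --- the real analogue of the paper's formula \eqref{eqn:stformula} --- and checking by induction that all of them are non-negative once $-r\geq 2$; the infinite order of $st$, which you do establish, then guarantees these group elements are pairwise distinct. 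With the dihedral lemma corrected in this form, the rest of your argument goes through.
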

\begin{definition} \label{defn:geometricrealrep}
A (faithful) $\bbR$-realisation $\bbR\Lambda_S$ satisfying the conditions in the proposition is called a (symmetric) \emph{geometric $\bbR$-realisation} of $\bbW$; the representation on $\bbR\Lambda$ is also called the (symmetric) \emph{geometric $\bbR$-representation} of $\bbW$.
\end{definition}

\begin{remark}
    If the Coxeter matrix only has off-diagonal entries $2,3,4,6$ and $\infty$, there is also a $\bbZ$-realisation that has a \emph{symmetrisable} Cartan matrix, where the proposition above still holds. (E.g.\ when $\bbW$ is a crystallograhic Coxeter group or a Weyl group associated to a Kac--Moody algebra.) 
\end{remark}

\section{Realisations over fusion rings}

\subsection{Fusion rings}
In addition to faithful realisations over $\bbR$, we will also study faithful realisations over a special type of rings known as \emph{fusion rings}, whose definition we now recall.
We refer the reader to \cite{EGNO15} for more details on fusion rings.
\begin{definition} \label{defn:fusionring}
A \emph{unital $\bbZ_{\geq 0}$-ring $R$ of finite rank $n$} is a free $\bbZ$-module with basis $\mathfrak{B} := \{b_j\}_{j=1}^n$ whose unital ring structure (not necessarily commutative) satisfies
\begin{enumerate}
\item $b_1 = 1$ is the unit of $R$; and
\item (positivity) $b_j\cdot b_k = \sum_{b_i \in \mathfrak{B}} {}_jc_{k,i} b_i$ is non-zero and ${}_jc_{k,i} \in \bbZ_{\geq 0}$. 
\end{enumerate}
A \emph{fusion ring} is a unital $\bbZ_{\geq 0}$-ring $R$ of finite rank equipped with an involution on the basis $\mathfrak{B}$
\[
b_j \mapsto b_{j^*} \eqqcolon b_j^* \in \mathfrak{B}
\]
such that
\begin{enumerate}
\item the induced $\bbZ$-linear map $?^*: R \ra R$ (by extending $\bbZ$-linearly) is an anti-involution; i.e.\ $(r \cdot s)^* = s^* \cdot r^*$; and
\item the unit $b_1 = 1$ appears in $b_j \cdot b_k$ with coefficient $1$ if and only if $k=j^*$; i.e.\ ${}_jc_{k,1} = 1$ if and only if $k=j^*$.
\end{enumerate}
\end{definition}
\begin{remark}
    If we drop the assumption that $1 \in \mathfrak{B}$, we obtain the notion of a (finite rank $\bbZ_{\geq 0}$-)based ring considered by Lusztig \cite{Lusztig_basedring}.
\end{remark}

The multiplicity coefficients ${}_jc_{k,i} \in \bbZ_{\geq 0}$ defined by
\begin{equation}\label{eqn:multcoefficient}
b_j \cdot b_k = \sum_i {}_jc_{k,i}b_i, \quad \forall b_j,b_k \in \mathfrak{B}
\end{equation}
satisfy the following property known as the Frobenius reciprocity:
\begin{lemma}\label{lem:Frobeniusrecip}
The coefficients in \eqref{eqn:multcoefficient} satisfy 
\[
{}_jc_{k,i} = {}_{j^*}c_{i,k} = {}_ic_{k^*,j}.
\]
\end{lemma}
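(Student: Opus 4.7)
The plan is to introduce an ``augmentation'' functional $\tau \colon R \to \bbZ$ that reads off the coefficient of $1 \in \mathfrak{B}$, and to express each structure constant ${}_jc_{k,i}$ as $\tau$ of a suitable triple product $b_j b_k b_{i^*}$. The reciprocity identities will then follow by exploiting two symmetries of $\tau$: a trace (cyclicity) property, and compatibility with the anti-involution $?^*$.

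First, I would define $\tau$ by $\tau\bigl(\sum a_i b_i\bigr) = a_1$. Axiom (ii) of \cref{defn:fusionring} translates directly into $\tau(b_j b_k) = \delta_{k, j^*}$. Two consequences I would record: (a) $\tau$ is a trace, i.e.\ $\tau(xy) = \tau(yx)$ for all $x, y \in R$ (by bilinearity it suffices to check $\tau(b_j b_k) = \tau(b_k b_j)$, which is clear since $k = j^*$ iff $j = k^*$); and (b) $\tau(x^*) = \tau(x)$ for all $x \in R$, using $b_1^* = b_1$ (which follows from axiom (ii) applied to $j = k = 1$).

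The key computation is then
\[
\tau(b_j b_k b_{i^*}) \;=\; \sum_l {}_jc_{k,l}\, \tau(b_l b_{i^*}) \;=\; \sum_l {}_jc_{k,l}\, \delta_{i^*, l^*} \;=\; {}_jc_{k,i}.
\]
Cycling the factors via the trace property gives relations such as ${}_jc_{k,i} = {}_{i^*}c_{j, k^*}$, obtained by expanding $\tau(b_{i^*} b_j b_k)$ in the same way. To obtain the reflection-type identities in the statement, I would apply $\tau = \tau \circ {?^*}$: since $?^*$ is an anti-involution,
\[
(b_j b_k b_{i^*})^* \;=\; b_i\, b_k^*\, b_j^* \;=\; b_i\, b_{k^*}\, b_{j^*},
\]
and expanding its $\tau$-value in the same manner yields ${}_ic_{k^*, j}$. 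This gives ${}_jc_{k,i} = {}_ic_{k^*, j}$ directly, and the remaining identity ${}_jc_{k,i} = {}_{j^*}c_{i, k}$ drops out by combining this with one of the cyclic relations (equivalently, by repeating the trace-plus-anti-involution argument starting from a different cyclic rotation of $b_j b_k b_{i^*}$).

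The argument involves no real obstacle beyond bookkeeping; the only subtlety is that $R$ is not assumed commutative, so I must respect the order of factors throughout and use only the genuine anti-involution property $(xy)^* = y^*x^*$, rather than any commutative shortcut.
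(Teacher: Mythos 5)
Your proof is correct and is essentially the standard argument behind the reference the paper cites (\cite{EGNO15}, Proposition 3.1.6) rather than writing out: the functional $\tau$ reading off the coefficient of $1$, its cyclicity, its $*$-invariance (via $b_1^* = b_1$), and the identity $\tau(b_j b_k b_{i^*}) = {}_jc_{k,i}$ all check out, and the two claimed equalities do follow exactly as you indicate by combining a cyclic rotation with the anti-involution step. The only caveat is that you read axiom (ii) of \cref{defn:fusionring} as $\tau(b_j b_k) = \delta_{k,j^*}$, which is the intended (EGNO) meaning even though the paper's literal phrasing only specifies when that coefficient equals $1$.
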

\begin{proof}
See \cite[Proposition 3.1.6]{EGNO15} and use the fact that ${}_jc_{k,i} = {}_{k^*}c_{j^*,i^*}$ (apply the anti-involution to \eqref{eqn:multcoefficient}).
\end{proof}

\begin{definition}
An element $r$ in a fusion ring $R$ is said to be \emph{non-negative} if $r$ is a non-negative $\bbZ$-linear combination of the basis elements; it is \emph{positive} if $r$ is moreover non-zero.
The set of non-negative and the set of positive elements are denoted by $R_{\geq 0}$ and $R_{>0}$ respectively.
\end{definition}
By definition, both $R_{\geq 0}$ and $R_{>0}$ are closed under addition; moreover, the positivity structure and the definition of the involution ensure that they are also closed under multiplication and involution.

\begin{remark}
Note that fusion rings, being free $\bbZ$-modules, are automatically torsion free; i.e. $r + \cdots + r \neq 0$ for any number of (finite) repeated sum of $r \in R \setminus \{0\}$.
\end{remark}

\begin{example} \label{eg:groupfusionring}
	Let $G$ be a finite group (not necessarily abelian).
	The group ring $\bbZ[G]$ of $G$ with basis $\mathfrak{B} = G$ and involution given by taking the inverse element makes $\bbZ[G]$ a fusion ring.
    The ring of integers $\bbZ$ is therefore also a fusion ring (take $G$ the trivial group).
    Note that if $G$ is non-abelian, then the fusion ring $\bbZ[G]$ is non-commutative.
\end{example}

\begin{example}[Type $A_{n-1}$ Verlinde fusion ring] \label{eg:Verlinde}
The Chebyshev polynomials $\Delta_n(x) \in \bbZ[x]$ are a sequence of polynomials defined recursively by the following:
\begin{align*}
\Delta_0(x) &= 1, \qquad \Delta_1(x) = x, \\
\Delta_{k+1}(x) &= \Delta_1(x)\Delta_k(x) - \Delta_{k-1}(x).
\end{align*}
In particular, we have $\Delta_2(x) = x^2 - 1, \Delta_3(x) = x^3 - 2x, \Delta_4(x) = x^4 - 3x^2 + 1$ and so forth.
Let $R_n := \bbZ[x_n]/ \< \Delta_{n-1}(x_n) \>$ denote the quotient ring.
Using the recurrence relation above, one obtains the following multiplication rule in $R_n$ (for notational simplicity we drop ``$(x_n)$''):
\begin{equation} \label{eqn:multrule}
\Delta_j \cdot \Delta_k = 
	\begin{cases}
	\Delta_{|j-k|} + \Delta_{|j-k| + 2} + \cdots + \Delta_{j+k}, &\text{if } j+k \leq n-2; \\
	\Delta_{|j-k|} + \Delta_{|j-k| + 2} + \cdots + \Delta_{2(n-1)-(j+k)}, &\text{otherwise}.
	\end{cases}
\end{equation}
By choosing the basis $\{\Delta_{j-1}(x_n)\}_{j=1}^{n-1}$ (which contains the unit $1 = \Delta_0(x_n)$) and the involution given by the identity map; i.e.\ $\Delta_{j-1}(x_n)^* := \Delta_{j-1}(x_n)$, it follows that the ring $R_n$ is a (commutative) fusion ring of rank $n-1$.
\end{example}

\begin{example}
Let $R^{\even}_n \subset R_n$ denote the free $\bbZ$-submodule generated by the basis elements $\Delta_{2k}(x_n)$ of even degree.
The multiplication rule in \eqref{eqn:multrule} shows that $R^{\even}_n$ is in fact a subring of $R_n$.
It follows immediately that $R^{\even}_n$ is itself a fusion ring (with basis $\{\Delta_{2k}(x_n) \mid 0 \leq 2k \leq n-2\}$).
\end{example}

\begin{example}[Representation ring of $S_3$] \label{eg:repringS3}
Let $R(S_3)$ denote the representation ring of the symmetric group $S_3$ on 3 elements.
As an abelian group, $R(S_3)$ is free of rank 3, with basis elements given by the 3 isomorphism classes of irreducible representations $\mathfrak{B} := \{1,V,\sgn\}$, denoting the trivial, standard (2-dimensional) and sign representations respectively.
The commutative ring structure is determined by the tensor product of representations, which is given by
\begin{equation} \label{eqn:repS3fusionrule}
\sgn \cdot \sgn = 1, \quad \sgn\cdot V = V = V \cdot \sgn, \quad V \cdot V = 1 + V + \sgn,
\end{equation}
where $1$ is the unit.
The involution on $\mathfrak{B}$ is given by taking the dual representation, which in this case is the identity map.
It follows that $R(S_3)$ is a fusion ring with basis $\mathfrak{B}$.

More generally, the representation ring of any finite group (or any semisimple Hopf algebra) is a fusion ring, with basis the set of isomorphism classes of irreducible representations, multiplication defined by the tensor product of representations, and involution given by taking the dual representation (which need not be the identity).
\end{example}

All of the examples presented above are known as fusion rings that are \emph{categorifiable} \cite[Definition 4.10.1]{EGNO15}.
Namely, they are fusion rings that arise as Grothendieck rings of specific type of monoidal categories known as \emph{fusion categories} \cite[Definition 4.1.1]{EGNO15}.
The following provides examples of fusion rings that are not categorifiable.

\begin{example}
This is essentially an extension of \cref{eg:groupfusionring}, first studied by Tambara--Yamagami in \cite{TY_category}.
Let $G$ be a finite group and let $m$ be a formal new element.
The fusion ring $TY(G)$ is the free abelian group of rank $|G|+1$:
\[
TY(G) \coloneqq \bbZ[G] \oplus \bbZ\cdot m,
\]
with multiplication rule given by ($g,h \in G$):
\[
g\cdot h = gh; \qquad g\cdot m = m = m \cdot g; \qquad m\cdot m = \sum_{g \in G} g.
\]
The involution on $g \in G$ is again the inverse, whereas the involution of $m$ is itself.
The main theorem in \cite[Corollary 3.3]{TY_category} shows that if $G$ is a non-abelian group, then $TY(G)$ is not categorifiable.
\end{example}

\begin{example}[Tensor product of fusion rings]
Let $R$ and $R'$ be two fusion rings with basis $\mathfrak{B}$ and $\mathfrak{B}'$ respectively.
Their tensor product $R\otimes R'$ (over $\bbZ$) is again a fusion ring, with basis $\mathfrak{B} \otimes \mathfrak{B}' := \{ b\otimes b' \mid b \in \mathfrak{B}, b' \in \mathfrak{B}'\}$.
\end{example}

\subsection{Frobenius--Perron dimension}
\begin{definition}\label{defn:FPdim}
Let $R$ be a fusion ring with basis $\mathfrak{B}$.
For each $b_j \in \mathfrak{B}$, the $n\times n$ matrix $({}_jc_{k,\ell})_{1 \leq k, \ell \leq n}$ is a non-negative matrix.
By the Frobenius--Perron theorem, the matrix $({}_jc_{k,\ell})_{1 \leq k, \ell \leq n}$ has a non-negative real eigenvalue dominating the absolute value of all of its other eigenvalues.
The \emph{Frobenius--Perron dimension} $\FPdim(b_j)$ of $b_j$ is defined to be this unique eigenvalue.
\end{definition}

The following results about fusion rings will be crucial; see \cite[Proposition 3.3.6, 3.3.9 \& Corollary 3.3.16]{EGNO15}:
\begin{proposition} 
\label{prop:FPdimproperties}
Let $R$ be a fusion ring with basis $\mathfrak{B}$.
\begin{enumerate}
\item \label{item:FPdimuniqueness} The group homomorphism $\FPdim: R \ra \bbR$ defined by sending each basis element $b_j \in \mathfrak{B}$ to $\FPdim(b_j)$ is moreover a ring homomorphism. 
\item $\FPdim$ is the unique ring homomorphism satisfying the property $\FPdim(b_j) \geq 0$ for all $b_j \in \mathfrak{B}$. Moreover, $\FPdim(b_j) \geq 1$ for all $b_j \in \mathfrak{B}$.
\item $\FPdim(r) = \FPdim(r^*)$ for all $r \in R$.
\item For any basis element $b \in \mathfrak{B}$, we have $\FPdim(b) < 2 \implies \FPdim(b) = 2\cos(\pi/m)$ for some integer $m \geq 3$.
\end{enumerate}
\end{proposition}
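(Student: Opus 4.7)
The plan is to reduce everything to a single structural fact about the left regular representation. In the basis $\mathfrak{B}$, let $L_j = ({}_jc_{k,i})_{k,i}$ be the matrix of left multiplication by $b_j$; the rule \eqref{eqn:multcoefficient} translates into the matrix identities $L_j L_k = \sum_i {}_jc_{k,i}\, L_i$, and by construction $\FPdim(b_j)$ is the Perron--Frobenius eigenvalue of $L_j$. The central claim I would prove is that there exists a single column vector $v$ with strictly positive real entries which is a simultaneous eigenvector of every $L_j$, satisfying $L_j v = \FPdim(b_j)\, v$. All four parts follow quickly from this.

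Indeed, applying $L_j L_k = \sum_i {}_jc_{k,i} L_i$ to $v$ gives $\FPdim(b_j)\FPdim(b_k)\, v = \sum_i {}_jc_{k,i}\FPdim(b_i)\, v$, and cancelling $v$ yields multiplicativity $\FPdim(b_jb_k) = \FPdim(b_j)\FPdim(b_k)$, proving (i). For uniqueness in (ii), any ring homomorphism $\phi:R \to \bbR$ with $\phi(b_j) \geq 0$ produces a non-negative eigenvector $w = (\phi(b_i))_i$ satisfying $L_j w = \phi(b_j)\, w$ simultaneously for every $j$, normalised by $\phi(1) = 1$; Perron--Frobenius uniqueness of the simultaneous eigenvector forces $\phi = \FPdim$. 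The lower bound $\FPdim(b_j) \geq 1$ then follows since $b_j b_{j^*}$ contains $b_1$ with coefficient $1$, giving $\FPdim(b_j)\FPdim(b_{j^*}) \geq 1$, combined with (iii) to yield $\FPdim(b_j)^2 \geq 1$. For (iii), Frobenius reciprocity (\cref{lem:Frobeniusrecip}) identifies $L_{j^*}$ with a conjugate of $L_j^T$ by the permutation induced by $?^*$; conjugation and transposition preserve spectrum, so the Perron--Frobenius eigenvalues coincide. For (iv), $L_b$ has integer entries, so $\FPdim(b)$ is an algebraic integer whose every Galois conjugate is an eigenvalue of $L_b$ and thus bounded in absolute value by the Perron--Frobenius eigenvalue $\FPdim(b) < 2$. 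By the classical Kronecker theorem on algebraic integers all of whose conjugates lie in the disk of radius $2$, $\FPdim(b) = \zeta + \zeta^{-1}$ for some root of unity $\zeta$; the strict positivity and the lower bound $\FPdim(b) \geq 1$ from part (ii) then pin this down to $\FPdim(b) = 2\cos(\pi/m)$ with $m \geq 3$.

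The main obstacle is the construction of the common positive eigenvector $v$, because the $L_j$ need not commute when $R$ is non-commutative, so classical Perron--Frobenius does not apply off the shelf. My strategy is to exploit the commutation between left and right regular actions: letting $R_k$ denote right multiplication by $b_k$, associativity gives $[L_j, R_k] = 0$ for all $j, k$. I would apply Perron--Frobenius to the single non-negative matrix $M := \sum_j L_j$ to obtain a non-negative eigenvector $v$ for its dominant eigenvalue; strict positivity of $v$ (and simplicity of the eigenvalue) follows once one checks that the directed graph underlying $M$ is strongly connected, which is a consequence of $b_1$ appearing in every product $b_j b_{j^*}$. Since each $R_k$ commutes with $M$ and preserves its top eigenline, a dualising argument — combined with the fact that the structure constants controlling $L_j$ and $R_k$ are interchanged by the Frobenius reciprocities of \cref{lem:Frobeniusrecip} — extracts the simultaneous eigenvector equations $L_j v = \lambda_j\, v$, and positivity of $v$ identifies each $\lambda_j$ with the Perron--Frobenius eigenvalue $\FPdim(b_j)$.
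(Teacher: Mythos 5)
The paper itself does not prove this proposition---it simply cites \cite[Propositions 3.3.6, 3.3.9 and Corollary 3.3.16]{EGNO15}---so you are reconstructing the standard argument from scratch. For parts (i)--(iii) your reconstruction is essentially the right one, up to a left/right bookkeeping slip: the Perron--Frobenius eigenvector $v$ of $M=\sum_j L_j$ is a simultaneous eigenvector for the \emph{right} multiplication operators $R_k$ (these are what commute with $M$), not for the $L_j$. The clean repair is to take $v$ to be the Perron--Frobenius eigenvector of $N=\sum_k R_k$ instead (entrywise positive by the same transitivity argument you give), so that each $L_j$ preserves its eigenline, giving $L_jv=\lambda_j v$ with $v>0$; one then identifies $\lambda_j=\FPdim(b_j)$ using the fact that a non-negative matrix admitting a strictly positive eigenvector has that eigenvalue as its spectral radius. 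Your ``dualising argument'' gestures at this, but swapping which sum you diagonalise is all that is needed. With that change, (i), (ii) and (iii) (via $L_{j^*}=L_j^{T}$, which is exactly the content of \cref{lem:Frobeniusrecip}) go through.

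The genuine gap is in (iv). The theorem you invoke---that an algebraic integer all of whose Galois conjugates lie in the complex \emph{disk} of radius $2$ is of the form $\zeta+\zeta^{-1}$ for a root of unity $\zeta$---is false: the real root $\approx 1.3247$ of $x^3-x-1$ has its two other conjugates of modulus $\approx 0.87$, so all conjugates lie well inside that disk, yet it is not $2\cos(\pi q)$ for any rational $q$. Kronecker's theorem needs the conjugates to lie in the real interval $[-2,2]$, i.e.\ $\FPdim(b)$ must be \emph{totally real}, and this does not follow from its conjugates being eigenvalues of the generally non-symmetric matrix $L_b$. The standard fix uses the involution: since $L_{b^*}=L_b^{T}$, one has $L_bL_b^{T}=L_{bb^*}$, so the symmetric integer matrix $\bigl(\begin{smallmatrix}0 & L_b\\ L_b^{T} & 0\end{smallmatrix}\bigr)$ has spectral radius $\sqrt{\rho(L_{bb^*})}=\sqrt{\FPdim(b)^2}=\FPdim(b)$ and admits $\FPdim(b)$ as an eigenvalue; hence all Galois conjugates of $\FPdim(b)$ are real and of absolute value at most $\FPdim(b)<2$. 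Only then does Kronecker apply, and note also that it is the domination of all conjugates by $\FPdim(b)$ (not merely the bound $\FPdim(b)\geq 1$) that rules out values such as $2\cos(2\pi/7)$ and pins the answer down to $2\cos(\pi/m)$.
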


\begin{remark}
    Note that the proposition above implies that any positive element $r \in R_{> 0}$ with $\FPdim(r) < 2$ must be equal to some basis element $b \in \mathfrak{B}$.
\end{remark}

\begin{example}
For $R = \bbZ[G]$, each basis element $g \in G$ has $\FPdim(g) = 1$.
For $R = TY(G)$, we have, moreover, that $\FPdim(m) = \sqrt{|G|}$.
\end{example}

\begin{example}
For each integer $n \geq 2$, the roots of the Chebyshev polynomial $\Delta_{n-1}(x)$ are given by $2\cos(k\pi/n)$ for integers $1 \leq k \leq n-1$.
In particular, the assignment $\Delta_1(x_n) = x_n \mapsto 2\cos(\pi/n)$ is a well-defined ring homomorphism from $R_n$ to $\bbR$.
Moreover, this satisfies the property that $\Delta_j(2\cos(\pi/n)) \geq 0$ for all $0 \leq j \leq n-1$.
To see this, recall that the $k$-th quantum number is defined as
\[
[k]_q = \frac{q^k -q^{-k}}{q-q^{-1}} = q^{k-1} + q^{k-3} + \cdots + q^{-(k-3)} + q^{-(k-1)},
\]
which satisfies the same recurrence relation: $[k+1]_q = [2]_q[k]_q+[k-1]_q$.
Then for all $1 \leq k \leq n$,
\[
\Delta_{k-1}(2\cos(\pi/n)) = [k]_{e^{i\pi/n}} \geq 0.
\]
By the uniqueness result of \cref{prop:FPdimproperties}\ref{item:FPdimuniqueness}, this agrees with the ring homomorphism $\FPdim: R_n \ra \bbR$, i.e.\ $\FPdim(\Delta_j(x_n)) = \Delta_j(2\cos(\pi/n))$ for all basis elements $\Delta_j(x_n) \in R_n$.
\end{example}

\begin{example}
Since $R^{\even}_n$ is a subring of $R_n$ freely generated by a subset of the basis of $R_n$, it follows (again by uniqueness) that $\FPdim: R^{\even}_n \ra \bbR$ is also defined by $\FPdim(\Delta_{2k}(x_n)) = \Delta_{2k}(2\cos(\pi/n))$.
Suppose that $n \geq 2$.
Using \eqref{eqn:multrule}, we have that 
\[
\begin{cases}
\Delta_{n-2}(x_n) \cdot \Delta_{n-2}(x_n) = \Delta_0(x_n) = 1 \\
\Delta_{n-2}(x_n) \cdot \Delta_{n-3}(x_n) = \Delta_1(x_n) = x_n.
\end{cases}
\]
These imply that $\FPdim(\Delta_{n-2}(x_n)) = 1$ and $\FPdim(\Delta_{n-3}(x_n)) = 2\cos(\pi/n)$.
Note that in particular, $R^{\even}_3 = \bbZ\cdot\{\Delta_0(x_3)\} \cong \bbZ$.
\end{example}

\begin{example}
For the representation ring of any finite group (or any semisimple Hopf algebra), the underlying dimension of the representation defines a ring homomorphism that sends each irreducible representation to a positive integer, hence agrees with $\FPdim$ by uniqueness.
In particular, for $R(S_3)$ the representation ring of $S_3$, we have
\[
\FPdim(1) = 1, \quad \FPdim(V) = 2, \quad \FPdim(\sgn) = 1.
\]
\end{example}

\begin{example}
Given a tensor product of two fusion rings $R \otimes R'$, we have that $\FPdim(r \otimes r') = \FPdim(r) \FPdim(r')$.
\end{example}

\subsection{Geometric realisations over fusion rings}
Let $(\bbW,S)$ be a Coxeter system.
We will be mainly interested in realisations over fusion rings satisfying the following conditions.
\begin{definition} \label{defn:geometricfusionrep}
    Let $R\Lambda_S$ be a realisation of $(\bbW,S)$ over the fusion ring $R$, with Cartan matrix $(r_{s,t})_{s,t \in S}$.
    We say that the $R$-realisation $R\Lambda_S$ is \emph{geometric} if
    \begin{enumerate}
		\item for all $s \in S$, $r_{s,s} = 2$ (this is implied by the definition of an $R$-realisation);
    	\item \label{item:*skewsym}for all $s \neq t$, $-r_{s,t} \in R_{\geq 0}$ and $r_{t,s} = r_{s,t}^*$;  and
    	\item each $r_{s,t}$ satisfies    
    		\begin{equation} \label{eqn:FPdimcondition}
    		\FPdim(r_{s,t}) = 
	   			\begin{cases}
			   	2, &\text{ if } s=t; \\
	   			-2\cos(\pi/m_{s,t}), &\text{ if } m_{s,t}<\infty; \\
	   			r_{s,t} \leq -2, &\text{ if } m_{s,t} = \infty.
	   			\end{cases}
    		\end{equation}
	\end{enumerate} 
    The corresponding $R$-representation $R\Lambda_S$ is also called a \emph{geometric $R$-representation} of $\bbW$.
\end{definition}
One readily checks that property \ref{item:*skewsym} of geometric $R$-realisations ensures that the $\bbW$-action on $R\Lambda$ preserves the $R$-sesquilinear form $B_R(-,-)$; i.e.
\[
B_R(s(u),s(v)) = B_R(u,v)
\]
for all $s \in S \subseteq \bbW$.
Note that while bilinear forms of geometric $\bbR$-realisations are, by definition, always symmetric, sesquilinear forms of geometric $R$-realisations are instead *-symmetric with respect to the anti-involution on $R$:
\[
B_R(u,v) = B_R(v,u)^*
\]

Moreover, we see that $(\FPdim(r_{s,t}))_{s,t \in S}$ is the Cartan matrix of a faithful geometric $\bbR$-realisation of $(\bbW,S)$, as in \cref{prop:realfaithfulrealisation}.
We will explicitly relate geometric $R$-realisations and geometric $\bbR$-realisations in later sections.

Examples of geometric realisations of Coxeter systems over \emph{categorifiable fusion rings} can be obtained using the following result in \cite[Theorem 6.6]{EH_fusionquivers}; cf.\ \cref{prop:realfaithfulrealisation}:
\begin{theorem} \label{thm:fusionrealisationcondition}
    Suppose $R$ is a categorifiable fusion ring. Let $(r_{s,t})_{s,t \in S}$ be a matrix with entries in $R$ satisfying the geometric conditions above; i.e
        \begin{enumerate}
		\item for all $s \in S$, $r_{s,s} = 2$;
    	\item for all $s \neq t$, $-r_{s,t} \in R_{\geq 0}$ and $r_{t,s} = r_{s,t}^*$;  and
    	\item each $r_{s,t}$ satisfies
            \[
    		\FPdim(r_{s,t}) = 
	   			\begin{cases}
			   	2, &\text{ if } s=t; \\
	   			-2\cos(\pi/m_{s,t}), &\text{ if } m_{s,t}<\infty; \\
	   			r_{s,t} \leq -2, &\text{ if } m_{s,t} = \infty.
	   			\end{cases}
            \]
	   \end{enumerate} 
    Then the $R$-sesquilinear form on $R\Lambda_S$ defined by this matrix according to \eqref{eqn:cartantobilinear} is a $R$-realisation of $(\bbW,S)$.
    This $R$-realisation -- geometric by definition -- is moreover faithful.
\end{theorem}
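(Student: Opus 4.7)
The plan is to verify two properties separately: \textbf{(a)} the formula $s(v) = v - B_R(\alpha_s, v)\cdot \alpha_s$ extends to a well-defined (left $R$-linear) action of $\bbW$ on $R\Lambda_S$; and \textbf{(b)} this action is faithful. Part (b) will be relatively short once (a) is in hand, so the bulk of the work is in (a).

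For (a), I would reduce to checking the Coxeter relations pairwise. The relation $s^2 = \id$ is an immediate two-line computation from $r_{s,s} = 2$. The braid relation $\underbrace{sts\cdots}_{m_{s,t}} = \underbrace{tst\cdots}_{m_{s,t}}$ for a pair $s\neq t$ only involves the rank-two submodule $R\alpha_s \oplus R\alpha_t$, since $s$ and $t$ both fix the complementary basis vectors on the nose. Inside this rank-two module the action of $s$ and $t$ is given by explicit $2\times 2$ matrices with entries $\pm 1, r_{s,t}, r_{t,s}$; composing, the action of $(st)$ can be written in terms of the single element $c := r_{s,t}r_{t,s}$ (which is $*$-invariant since $c^* = r_{s,t}r_{t,s} = c$). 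An inductive expansion then shows that $(st)^k(\alpha_s)$ and $(st)^k(\alpha_t)$ are expressible via a Chebyshev-like polynomial sequence $p_k(c) \in R$ satisfying the recurrence $p_{k+1}(c) = (c-2)p_k(c) - p_{k-1}(c)$ (up to a sign convention), with initial data coming from $p_0, p_1$ read off the matrices. The braid relation $(st)^{m} = \id$ thus reduces to the vanishing of a specific polynomial $P_m(c) \in R$ (and to a couple of analogous identities for the off-diagonal components).

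The main obstacle, and where categorifiability enters, is showing $P_m(c) = 0$. Here the hypothesis \eqref{eqn:FPdimcondition} forces $\FPdim(-r_{s,t}) = 2\cos(\pi/m)$, which is strictly less than $2$ when $m \geq 3$. By \cref{prop:FPdimproperties}(iv) and the remark following it, this pins $-r_{s,t}$ down to a single basis element $b \in \mathfrak{B}$ with $\FPdim(b) = 2\cos(\pi/m)$. Thus $c = b\cdot b^*$ is a specific fusion product. Since $R$ is categorifiable, $R$ is the Grothendieck ring of a fusion category $\mathcal{C}$ and $b$ lifts to a simple object $X \in \mathcal{C}$ whose Frobenius--Perron dimension is $2\cos(\pi/m)$. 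It is well known that such simple objects generate a fusion subcategory whose Grothendieck ring is a quotient of the Verlinde-type ring $R_m$ of \cref{eg:Verlinde}, and in particular $\Delta_{m-1}(b) = 0$ or more precisely $b$ satisfies the Chebyshev relation dictating that $P_m(c) = P_m(bb^*)$ vanishes. The case $m = \infty$ has no relation to verify. Assembling this gives the braid relations, and hence (a).

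For (b), apply the ring homomorphism $\FPdim : R \to \bbR$ entrywise to the Cartan matrix $(r_{s,t})$. The resulting real matrix satisfies exactly the hypotheses of \cref{prop:realfaithfulrealisation}, so it defines a faithful geometric $\bbR$-representation on $\bbR\Lambda_S$. The $\bbZ$-linear map $\FPdim \otimes \id : R\Lambda_S \to \bbR\Lambda_S$ intertwines the two $\bbW$-actions, so if $w \in \bbW$ acts trivially on $R\Lambda_S$ then it acts trivially on $\bbR\Lambda_S$, forcing $w = 1$. This gives faithfulness and completes the proof.
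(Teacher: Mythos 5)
First, note that the paper does not actually prove \cref{thm:fusionrealisationcondition}: it is imported wholesale from \cite[Theorem~6.6]{EH_fusionquivers}, and the remark following it locates the only use of categorifiability in ``a positivity lemma in relation to (not necessarily commutative) two variable quantum numbers''. Your overall architecture --- reduce to the rank-two submodule, set up a Chebyshev-type recursion, invoke categorifiability for the key vanishing, and deduce faithfulness by pushing forward along $\FPdim$ --- matches that proof in outline, and your part (b) is correct and essentially the argument the paper gives later in \cref{cor:faithfulfusionrealisation}.

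However, there is a genuine gap at the crucial step of part (a). You justify the vanishing $P_m(c)=0$ by asserting that a simple object $X$ with $\FPdim(X)=2\cos(\pi/m)$ generates a fusion subcategory whose Grothendieck ring is a quotient of the Verlinde ring $R_m$. That is false in general: take $m_{s,t}=3$, so $\FPdim(-r_{s,t})=1$ and $-r_{s,t}=b$ is an \emph{invertible} basis element, possibly of order $n>2$; the subcategory it generates is pointed with Grothendieck ring $\bbZ[\bbZ/n]$, which is not a quotient of $R_3$, and indeed $\Delta_2(b)=b^2-1\neq 0$. More generally $b$ need not be self-dual, so the one-variable Chebyshev relation $\Delta_{m-1}(b)=0$ is simply the wrong identity. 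What the braid relation actually requires is the vanishing of the \emph{two-variable} quantum number built from alternating products $b\,b^*\,b\,b^*\cdots$ (equivalently, identities in $b^*b$ and $bb^*$, which are distinct elements in the non-commutative case --- your single $c=r_{s,t}r_{t,s}$ elides this). The proof of that vanishing is: show by induction that each $[k]_{b,b^*}$ for $k\le m$ lies in $R_{\geq 0}$, then observe that $\FPdim([m]_{b,b^*})=[m]_{e^{i\pi/m}}=0$ forces $[m]_{b,b^*}=0$. The inductive positivity is exactly where the fusion \emph{category} (Jones--Wenzl-type objects, not just the ring) is needed, and it is the content of \cite[Lemma~5.7]{EH_fusionquivers}. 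Your proposal identifies the right place for the categorical input but substitutes an incorrect and insufficient justification for it, so as written the braid relations are not established.
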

Note the extra assumption that $R$ is a categorifiable fusion ring -- we expect this assumption to be superfluous:
\begin{conjecture}
    \label{conj:fusioncatsuperfluous}
    \cref{thm:fusionrealisationcondition} holds for any fusion ring $R$.
\end{conjecture}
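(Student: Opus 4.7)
The plan is to verify directly that the formula $s(v) := v - B_R(\alpha_s, v)\alpha_s$ extends to a well-defined left $R$-linear action of $\bbW$ on $R\Lambda_S$, without invoking any categorification. Conditions (i) and (ii) of the definition of an $R$-realisation are built into the hypotheses, so the content lies entirely in condition (iii), namely the verification of the Coxeter relations $s^2 = 1$ and $\underbrace{sts\cdots}_{m_{s,t}} = \underbrace{tst\cdots}_{m_{s,t}}$. The first is immediate from $r_{s,s}=2$.

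For the braid relation with $m := m_{s,t} < \infty$, a direct computation---expressing $w(\alpha_u)$ for $u \notin \{s,t\}$ in terms of the rank-$2$ action of $w$ on $R\alpha_s \oplus R\alpha_t$ and noting that the off-diagonal entries $r_{s,u}, r_{t,u}$ enter left-$R$-linearly---reduces the task to verifying the matrix identity $([s][t])^m = I$ in $M_2(R')$, where $R' \subseteq R$ is the subring generated by $r := r_{s,t} = -b$ and $r^* := r_{t,s} = -b^*$. (The promotion from this rank-$2$ identity to the full braid relation is effected by verifying $I + [s][t] + \cdots + ([s][t])^{m-1} = 0$ in $M_2(R')$, mirroring the standard argument in the $\bbR$-setting via invertibility of $I - [s][t]$.) By the FPdim hypothesis $\FPdim(b) = 2\cos(\pi/m) < 2$, combined with \cref{prop:FPdimproperties}, both $b$ and $b^*$ must be basis elements of $\mathfrak{B}$.

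Expanding $([s][t])^m = I$ unfolds into a system of Chebyshev-like polynomial identities in $b$ and $b^*$. Concretely, define the sequence $c_0 := 1$, $c_1 := b$, and $c_{k+1} := b^{(k+1)} c_k - c_{k-1}$, where $b^{(k)}$ alternates between $b$ and $b^*$. Under $\FPdim$ the $c_k$ map to $\Delta_k(2\cos(\pi/m))$, which vanishes at $k = m-1$. If each $c_k$ lies in $R_{\geq 0}$ for $0 \leq k \leq m-2$, then $\FPdim(c_{m-1}) = 0$ combined with the positivity of $\FPdim$ on $R_{\geq 0}\setminus\{0\}$ forces $c_{m-1} = 0$, giving the desired identity. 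Once the representation is established, faithfulness is automatic: the ring homomorphism $\FPdim\colon R \to \bbR$ intertwines the $R$-representation with the faithful geometric $\bbR$-representation of \cref{prop:realfaithfulrealisation}, so no nontrivial $w \in \bbW$ can act trivially on $R\Lambda_S$.

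The crux of the conjecture is therefore the non-negativity of the Jones--Wenzl-like elements $c_k$. In the categorifiable case of \cref{thm:fusionrealisationcondition}, this follows from the existence of Jones--Wenzl projectors in any fusion category containing an object of Frobenius--Perron dimension $2\cos(\pi/m)$. The natural decategorified attack is induction via Frobenius reciprocity (\cref{lem:Frobeniusrecip}): show that $c_{k-1}$ appears as a basis-summand of $b^{(k)} c_k$ with multiplicity at least one, so that $c_{k+1}$ remains non-negative. Equivalently, one aims to prove that the fusion subring of $R$ generated by $b$ and $b^*$ is a quotient of the appropriate (possibly non-self-dual) Temperley--Lieb fusion ring. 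This is the principal obstacle: a Perron--Frobenius analysis of the multiplication operators by $b$ and $b^*$, tracking the basis decompositions of their iterated products, appears promising but a satisfactory treatment lies beyond the scope of the present paper.
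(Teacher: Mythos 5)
Your proposal is not a proof, and you say so yourself: all of the difficulty is concentrated in the non-negativity of the elements $c_k$ (the non-commutative two-variable quantum numbers built from $b = -r_{s,t}$ and $b^* = -r_{t,s}$), and you leave exactly that step open. This is precisely why the statement is stated as a \emph{conjecture} in the paper rather than a theorem: the remark following it records that the only place where categorifiability of $R$ enters the proof of \cref{thm:fusionrealisationcondition} in \cite{EH_fusionquivers} is a positivity lemma for these same two-variable quantum numbers, proved there using Jones--Wenzl projectors in a fusion category. So your diagnosis of the obstruction coincides with the authors' own, but neither the Frobenius-reciprocity induction nor the Perron--Frobenius analysis you sketch is carried out, and no new ground is gained. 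A minor caveat on the reduction itself: in the non-commutative setting the coefficients of $\alpha_s$ and $\alpha_t$ in $(st)^k(\alpha_u)$ multiply $r_{s,u}$ and $r_{t,u}$ from a specific side (compare the explicit formulas \eqref{eqn:stformula}), so the identity one actually needs is the vanishing of the appropriate one-sided quantum numbers $c_{m-1}$, rather than a bare matrix identity over the subring generated by $b$ and $b^*$; since your $c_k$ are the right objects this is a presentational issue, not an error.

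The one half of the conjecture that the paper does establish unconditionally is faithfulness, as \cref{cor:faithfulfusionrealisation}, and your argument for that half is sound though it takes a slightly different route: you intertwine the $R$-representation directly with the geometric $\bbR$-representation via $\widetilde{\FPdim}$ (which works because the image of $\widetilde{\FPdim}$ contains $\bbZ\Lambda_S$ and hence spans $\bbR\Lambda_S$, so any $w$ acting trivially on $R\Lambda_S$ would act trivially on $\bbR\Lambda_S$, contradicting \cref{prop:realfaithfulrealisation}), whereas the paper passes to the contragredient representation $\Hom_R(R\Lambda_S,\bbR) \cong \Hom_\bbR(\bbR\Lambda_S,\bbR)$. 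The substantive content of the conjecture --- that $s(v) = v - B_R(\alpha_s,v)\alpha_s$ satisfies the braid relations over an arbitrary, possibly non-categorifiable fusion ring --- remains unproved in your proposal, exactly as it remains unproved in the paper.
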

We will later prove the faithfulness part of the conjecture above as \cref{cor:faithfulfusionrealisation}; i.e.\ geometric $R$-realisations over any fusion ring $R$ are faithful.
\begin{remark}
The only part of the proof of \cref{thm:fusionrealisationcondition} in \cite{EH_fusionquivers} that requires the assumption $R$ to be categorifiable is a positivity lemma in relation to (not necessarily commutative) two variable quantum numbers; see \S 5 and Lemma 5.7 in \cite{EH_fusionquivers} for more details.
\end{remark}

\subsection{Main examples}
We provide here some examples of geometric $R$-realisations obtainable from \cref{thm:fusionrealisationcondition}.
For completeness and for simplicity, we will sketch the proofs that these are indeed geometric $R$-realisations.
The following example shows that every Coxeter system has a geometric realisation over some fusion ring (note that $\bbR$ is not a fusion ring, so this does not follow from classical theory).
\begin{example} \label{eg:Verlinderealisation}
Let $(\bbW,S)$ be a Coxeter system and consider the (finite) set
\[
M:= \{ m_{s,t} \mid s \neq t \in S\} \subset \{2,3,4,\ldots\} \cup \{\infty\}.
\]
Let $\Delta_i(x)$ be the $i$-th Chebyshev polynomial; let us also set $\Delta_{\infty-1}(x_\infty) \coloneqq x_\infty - 2$ by convention.
For each $m \in M$, consider the fusion ring
\[
R_m \coloneqq \bbZ[x_m]/\< \Delta_{m-1}(x_m) \>,
\]
which is the Verlinde fusion ring from \cref{eg:Verlinde} for all $m < \infty$, and $R_\infty \coloneq \bbZ[x_\infty]/\<x_\infty-2\> \cong \bbZ$.
We define the type $A$ fusion ring $R_M$ associated to $(\bbW,S)$ as
\[
R_M := \bigotimes_{m \in M} R_m \cong 
	\bbZ[x_m \mid m \in M]/\< \Delta_{m-1}(x_m) \text{ for each } m \in M\>.
\]
(In the degenerate case where $M = \emptyset$, we set $R_M := \bbZ$).
By construction, $R_M$ is a commutative fusion ring whose involution is the identity map.

For notational simplicity, we will use the identification of $R_M$ as the quotient ring of the polynomial ring given above.
As such, every $r \in R_m$ for $m \in M$ is immediately an element of $R_M$.
Moreover, each basis element of $R_M$ is the product of basis elements $\Delta_{j_m}(x_m)$ of each $R_m$ (once again, if $M = \emptyset$ then the basis element of $R_M$ is just $1$).

Consider the (Cartan) matrix $(r_{s,t})_{s,t\in S}$ defined by
\[
r_{s,t} = r_{t,s} \coloneqq
	\begin{cases}
	2, &\text{if } s=t; \\
	-x_{m_{s,t}}, &\text{if } s \neq t;
	\end{cases}
\]
note that $x_2 = \Delta_1(x_2) = 0$ and $x_\infty = 2$ by definition.
The corresponding $R_M$-sesquilinear form on $R_M\Lambda_S$ is therefore defined on the basis elements by
\[
B_{R_M}(\alpha_s,\alpha_t) = r_{s,t}.
\]
Note that this $R_M$-sesquilinear form is in fact a symmetric $R_M$-bilinear form, since the involution $?^*$ is the identity.

\begin{proposition}
    $R_M\Lambda_S$ equipped with $B_{R_M}(-,-)$ is a geometric $R_M$-realisation of $(\bbW,S)$.
\end{proposition}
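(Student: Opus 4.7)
The plan is to verify the three axioms of \cref{defn:geometricfusionrep} for the Cartan matrix $(r_{s,t})_{s,t\in S}$ and then invoke \cref{thm:fusionrealisationcondition} to conclude that the prescribed action of $\bbW$ is indeed well-defined on $R_M\Lambda_S$. Axiom (i) holds by construction. For the symmetry part of axiom (ii), the equality $r_{s,t} = r_{t,s}$ is built into the definition, and since each factor $R_m$ carries the identity involution, so does the tensor product $R_M$; hence $r_{s,t}^* = r_{s,t} = r_{t,s}$. The non-negativity condition $-r_{s,t} \in R_{\geq 0}$ follows by case analysis on $m_{s,t}$: for $s \neq t$ one has $-r_{s,t} = x_{m_{s,t}}$, which equals $0$ when $m_{s,t} = 2$, is the basis element $\Delta_1(x_{m_{s,t}})$ when $3 \leq m_{s,t} < \infty$, and is $2 \cdot 1$ when $m_{s,t} = \infty$.

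For axiom (iii) I would use that $\FPdim \colon R_M \to \bbR$ is a ring homomorphism compatible with the tensor product decomposition, so the computation reduces to evaluating $\FPdim(x_m)$ in each factor $R_m$. These were already handled in earlier examples: for $3 \leq m < \infty$ one has $\FPdim(x_m) = \FPdim(\Delta_1(x_m)) = 2\cos(\pi/m)$, for $m = 2$ the element $x_2 = 0$, and for $m = \infty$ the relation $x_\infty = 2$ in $R_\infty \cong \bbZ$ gives $\FPdim(x_\infty) = 2$. Combining these yields $\FPdim(r_{s,t}) = -2\cos(\pi/m_{s,t})$ whenever $m_{s,t} < \infty$ and $r_{s,t} = -2$ when $m_{s,t} = \infty$, matching \eqref{eqn:FPdimcondition} exactly.

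Having verified the three geometric conditions, I would invoke \cref{thm:fusionrealisationcondition} to conclude that the induced action is a (faithful) $R_M$-realisation. The only non-automatic hypothesis is that $R_M$ must be categorifiable, so I would confirm this by noting that each Verlinde fusion ring $R_m$ is categorified by a well-known fusion category (the semisimplified quantum $\mathfrak{sl}_2$, or Temperley--Lieb--Jones, category at the corresponding root of unity), while $R_\infty \cong \bbZ$ is categorified by the category of finite-dimensional vector spaces; the Deligne tensor product of fusion categories then categorifies the tensor product $R_M = \bigotimes_m R_m$.

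The only step that is not a purely mechanical check is this last appeal to categorifiability, which is where \cref{thm:fusionrealisationcondition} (and hence the results of \cite{EH_fusionquivers}) is used; once \cref{conj:fusioncatsuperfluous} is established -- or at least its faithfulness part cited as \cref{cor:faithfulfusionrealisation} -- this categorifiability check becomes superfluous and the proof collapses to the three routine verifications above.
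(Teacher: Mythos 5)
Your proof is correct, but it takes a genuinely different route from the paper's. You verify the three geometric conditions on the Cartan matrix (all correctly: the case analysis on $x_{m_{s,t}}$ for $m_{s,t}=2$, $3\le m_{s,t}<\infty$, and $m_{s,t}=\infty$ is exactly right, as is the $\FPdim$ computation) and then outsource the only hard part -- that \eqref{eqn:realisationrepresentation} gives a well-defined $\bbW$-action -- to \cref{thm:fusionrealisationcondition}, which requires you to additionally check that $R_M$ is categorifiable via Temperley--Lieb--Jones categories and Deligne tensor products. The paper deliberately avoids this: it states just before the example that the proofs are sketched ``for completeness and for simplicity,'' and its argument is a direct, self-contained verification that $(st)^{m_{s,t}}$ acts as the identity, using the explicit Chebyshev-polynomial formulas \eqref{eqn:stformula} for $(st)^k(\alpha_s)$, $(st)^k(\alpha_t)$, $(st)^k(\alpha_u)$ in the framework of \cite[A.2]{elias_2015} (the geometric conditions being immediate from the construction). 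What your approach buys is brevity and faithfulness for free; what the paper's approach buys is independence from the categorification machinery of \cite{EH_fusionquivers} -- which matters here, since this example is meant to show unconditionally that \emph{every} Coxeter system admits a geometric realisation over some fusion ring, and the paper prefers not to rest that claim on \cref{thm:fusionrealisationcondition}, whose extension to arbitrary fusion rings is still only \cref{conj:fusioncatsuperfluous}. Your closing remark slightly overstates what \cref{cor:faithfulfusionrealisation} gives you: it supplies faithfulness of a realisation already known to be geometric, but not the well-definedness of the action itself, so the appeal to categorifiability cannot be dropped from your argument without replacing it by something like the paper's direct computation.
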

\begin{proof}
    The proof is a direct application of \cite[A.2]{elias_2015}.
    Namely, we only have to show that for each pair of distinct generators $s \neq t \in S$ with $m \coloneqq m_{s,t} < \infty$, $(st)^m$ acts on $R_M\Lambda_S$ by the identity (it is clear that $s^2 = \id$, and if $m_{s,t} = \infty$ there is nothing to check).
    To this end, note that for all $k \geq 1$, we have
    \begin{equation} \label{eqn:stformula}
    \begin{split}
    (st)^k(\alpha_s) &= \Delta_{2k}(x_m)\alpha_s + \Delta_{2k-1}(x_m)\alpha_t; \\
    (st)^k(\alpha_t) &= -\Delta_{2k-1}(x_m)\alpha_s - \Delta_{2k-2}(x_m)\alpha_t; \\
    (st)^k(\alpha_u) &= \alpha_u + (\Delta_{k-1}(x_m)^2r_{s,u} + \Delta_{k-1}(x_m)\Delta_{k}(x_m)r_{t,u})\alpha_s \\
    & \quad - (\Delta_{k-1}(x_m)\Delta_{k-2}(x_m)r_{s,u}+\Delta_{k-1}(x_m)^2r_{t,u})\alpha_t;
    \end{split}
    \end{equation}
    where $u \notin \{s,t\}$.
    In particular, if $k=m$, we see that $(st)^m(\alpha_v) = \alpha_v$ for all $v \in S$.
\end{proof}
\end{example}

\begin{example} \label{eg:modifiedrealisation}
The following modifications on $R_M$ also provides geometric realisations of $(\bbW,S)$ over fusion rings (which are also commutative with $?^*$ the identity):
\begin{itemize}
\item Let $R^e_M$ denote the fusion ring obtained from $R_M$ by replacing $R_m$ with $R_m^{\even}$ for each odd $m$ in the definition of $R_M$ above. Moreover, for each pair of distinct $s$ and $t$ with $m_{s,t}$ odd, we set 
\[
r_{s,t} \coloneqq -\Delta_{m-3}(x_m) 
\]
in place of $-x_m = -\Delta_1(x_m)$.
\item Let $R^\infty_M$ be the fusion ring obtained from $R_M$ by replacing $R_\infty$ by the representation ring $R(S_3)$ of $S_3$ (see \cref{eg:repringS3}).
Moreover, for each pair of distinct $s$ and $t$ with $m_{s,t}=\infty$, we set $r_{s,t} \coloneqq -V$ in place of $-x_\infty = -2$, where $V$ is the two-dimensional standard representation of $S_3$.
\end{itemize}
The fact that $R^e_M\Lambda_S$ is still a geometric realisation of $\bbW$ follows from the same type of computation in \eqref{eqn:stformula}.
On the other hand there is nothing left to check for $R^\infty_M\Lambda_S$ since we only modified the $\infty$ part.
\end{example}

\begin{example} \label{eg:TYrealisation}
For labels $m_{s,t}=4$ and $6$, one could also use $TY\left(\bbZ/\left(\frac{m_{s,t}}{2}\bbZ\right)\right)$ in place of $R_{m_{s,t}}$, and use $m \in TY\left(\bbZ/\left(\frac{m_{s,t}}{2}\bbZ\right)\right)$ in place of $x_{m_{s,t}}$; note that $\FPdim(m) \geq 2$ in $TY(\bbZ/a\bbZ)$ for $a \geq 4$, which will only result in realisations for $m_{s,t}=\infty$.
We leave it to the reader to check that these also provide geometric realisations over fusion rings.
\end{example}

\subsection{From geometric realisations over fusion rings to realisations over integers} \label{sec:fusionreptoZrep}
Throughout this section, let $R$ be a fusion ring with basis $\mathfrak{B}$ and let $R\Lambda_S$ be a geometric $R$-realisation of $(\bbW,S)$, with $(r_{s,t})_{s,t \in S}$ its Cartan matrix. 

Recall that $R$, being a fusion ring, is by definition a free $\bbZ$-module with basis $\mathfrak{B}$.
It follows that $R\Lambda_S$ is a free $\bbZ$-module
with the following canonical decomposition: 
\begin{equation} \label{eqn:Zmodstructure}
R\Lambda_S = \bigoplus_{b \in \mathfrak{B}, s \in S} \bbZ\cdot b \alpha_s,
\end{equation}
where $\{b\alpha_s \mid b \in \mathfrak{B}, s \in S\}$ is the $\bbZ$-basis.

The trivial yet important observation is that any $R$-linear morphism is automatically $\bbZ$-linear, i.e.\ a morphism of abelian groups.
As such, given any realisation over the fusion ring $R$, the corresponding $R$-linear action of $\bbW$ on $R\Lambda_S$ is also $\bbZ$-linear.

We shall describe the $\bbZ$-linear action of $\bbW$ on $R\Lambda_S$ in terms of a ``larger'' Coxeter system realisable by an integral Cartan matrix.
We begin with the following abstract definition:
\begin{definition}
\label{defn:unfoldedCox}
Let $(\bbW, S)$ be a Coxeter system together with a geometric $R$-realisation $R\Lambda_S$, and let $(r_{s,t})_{s,t \in S}$ denote its Cartan matrix.
The \emph{unfolded Coxeter system} $(\check{\bbW}, \check{S})$ with respect to the realisation $R\Lambda_S$ is the Coxeter system corresponding to the Coxeter graph $\check{\Gamma}$ defined as follows:
\begin{itemize}
\item the set of vertices $\check{\Gamma}_0$ is given by $\check{S} \coloneqq \mathfrak{B} \times S$;
\item two distinct vertices $(b_i, s), (b_j,t) \in \check{\Gamma}_0$ are connected by an edge if and only if $s\neq t$ and $b_j$ appears in $(-r_{s,t}) \cdot b_i$ as a non-zero summand; moreover this edge is labeled by $3$ if $b_j$ appears with coefficient 1, otherwise it is labeled by $\infty$.
\end{itemize}
\end{definition}

\begin{example} \label{eg:unfoldedsystems}
Let $R_M$ be the type $A$ fusion ring associated to $(\bbW,S)$ and consider its geometric $R_M$-realisation as in \cref{eg:Verlinderealisation}.
Let us first focus on the case where $M = \{m\}$ is a singleton set, so that $R_M = R_m$.
First consider the case $m < \infty$, so that the basis of $R_m$ is given by $\mathfrak{B} = \{\Delta_0(x_m)=1,\Delta_1(x_m)=x_m,\ldots, \Delta_{m-2}(x_m)\}.$
Recall that (see e.g.\ \eqref{eqn:multrule})
\[
x_m \cdot \Delta_i(x_m) = \Delta_1(x_m) \cdot \Delta_i(x_m) = \begin{cases}
    \Delta_1(x_m), &\text{ if } i=0 \\
    \Delta_{i-1}(x_m) + \Delta_{i+1}(x_m), &\text{ if } 1 \leq i \leq m-3; \\
    \Delta_{i-3}(x_m), &\text{ if } i = m-2.
\end{cases}
\]
From this we see that for any two distinct $s \neq t \in S$, we get two copies of type $A_{m-1}$ chains in the Coxeter graph $\check{\Gamma}$:
\[\begin{tikzcd}[row sep =small]
	{(\Delta_0(x_m),s)} & {(\Delta_1(x_m),s)} & {{}} && {{}} & {(\Delta_{m-2}(x_m),s)} \\
	&&& \cdots \\
	{(\Delta_0(x_m),t)} & {(\Delta_1(x_m),t)} & {{}} && {{}} & {(\Delta_{m-2}(x_m),t)}
	\arrow[no head, from=1-1, to=3-2]
	\arrow[no head, from=1-2, to=3-3]
	\arrow[no head, from=1-5, to=3-6]
	\arrow[no head, from=3-1, to=1-2]
	\arrow[no head, from=3-2, to=1-3]
	\arrow[no head, from=3-5, to=1-6]
\end{tikzcd}.\]
The case where $m = \infty$ is trivial, since $R_M = R_\infty \cong \bbZ$ and the Coxeter graph $\check{\Gamma}$ is the same as the Coxeter graph $\Gamma$.

Now consider the general case where $M$ need not be a singleton set. 
The basis elements of $R_M$ is given by a product $\prod_{m \in M} \Delta_{i_m}(x_m)$ for each possible sequence $(i_m)_{m \in M}$ with $0 \leq i_m \leq m-2$ if $m < \infty$, and $i_\infty = 0$.
Let $s \neq t \in S$ be such that $m = m_{s,t}$.
Since each $x_m$ only interacts with the $\Delta_{i_m}(x_m)$ part, for each $\prod_{\substack{m' \in M \\ m' \neq m}} \Delta_{i_{m'}}(x_{m'})$, we obtain two copies of type $A_{m-1}$ chains as above when $m < \infty$:
\[\begin{tikzcd}[row sep =small, column sep = small]
	{(\Delta_0(x_m)\prod_{\substack{m' \in M \\ m' \neq m}} \Delta_{i_{m'}}(x_{m'}),s)} &  {{}} && {{}} & {(\Delta_{m-2}(x_m)\prod_{\substack{m' \in M \\ m' \neq m}} \Delta_{i_{m'}}(x_{m'}),s)} \\
	&&& \cdots \\
	{(\Delta_0(x_m)\prod_{\substack{m' \in M \\ m' \neq m}} \Delta_{i_{m'}}(x_{m'}),t)} & {{}} && {{}} & {(\Delta_{m-2}(x_m)\prod_{\substack{m' \in M \\ m' \neq m}} \Delta_{i_{m'}}(x_{m'}),t)}
	\arrow[no head, from=1-1, to=3-2]
	\arrow[no head, from=1-4, to=3-5]
	\arrow[no head, from=3-1, to=1-2]
	\arrow[no head, from=3-4, to=1-5]
\end{tikzcd};\]
otherwise when $m = \infty$, we just get a copy of affine $\widetilde{A}_1$.
Note that if we use the geometric realisation over the fusion ring $R^\infty_M$ from \cref{eg:modifiedrealisation} instead, this affine $\widetilde{A}_1$ graph is replaced with an affine $\widetilde{D}_5$ Coxeter graph 
(cf.\ \eqref{eqn:repS3fusionrule}):
\[\begin{tikzcd}[column sep=small, row sep = large]
	(1\cdot\prod_{\substack{m' \in M \\ m' \neq \infty}} \Delta_{i_{m'}}(x_{m'}),s) & (V\cdot\prod_{\substack{m' \in M \\ m' \neq \infty}} \Delta_{i_{m'}}(x_{m'}),s) &  (\sgn\cdot\prod_{\substack{m' \in M \\ m' \neq \infty}} \Delta_{i_{m'}}(x_{m'}),s) \\
	(1\cdot\prod_{\substack{m' \in M \\ m' \neq \infty}} \Delta_{i_{m'}}(x_{m'}),t) &  (V\cdot\prod_{\substack{m' \in M \\ m' \neq \infty}} \Delta_{i_{m'}}(x_{m'}),t) &  (\sgn\cdot\prod_{\substack{m' \in M \\ m' \neq \infty}} \Delta_{i_{m'}}(x_{m'}),t)
	\arrow[no head, from=1-1, to=2-2]
	\arrow[no head, from=1-2, to=2-2]
	\arrow[no head, from=1-2, to=2-3]
	\arrow[no head, from=2-1, to=1-2]
	\arrow[no head, from=2-2, to=1-3]
\end{tikzcd}.\]
In particular, $\check{\Gamma}$ with respect to $R^\infty_M$ is always a simply-laced Coxeter graph.
\end{example}

\begin{example} \label{eg:pentagonunfolding}
    We record also the following specific case, as it will be the smallest (in rank) non-trivial example that we will refer to later.
    Let $\Gamma = I_2(5) = \begin{tikzcd}[column sep=small] s \ar[r, no head, "5"] & t \end{tikzcd}$, so that $\bbW$ is the group of isometries of a regular pentagon.
    Then $R^e_M \cong \bbZ[x]/\langle x^2 = 1+x\rangle$ (where the isomorphism identifies $\Delta_2(x_5) \mapsto x$) and $\check{\Gamma}$ with respect to the geometric $R^e_M$-realisation is given by
    \[\begin{tikzcd}
	{(1,s)} & {(x,s)} \\
	{(1,t)} & {(x,t)}
	\arrow[no head, from=1-1, to=2-2]
	\arrow[no head, from=1-2, to=2-1]
	\arrow[no head, from=2-2, to=1-2]
    \end{tikzcd},\]
    which is of type $A_4$.
\end{example}

Consider the integral matrix $(\check{r}_{(b_i,s), (b_j,t)})_{(b_i,s), (b_j,t) \in \check{S}}$ with entries defined as follows.
\begin{itemize}
\item For $s=t$,
    \[
    \check{r}_{(b_i,s),(b_j,s)} :=
        \begin{cases}
            2, &\text{if } i=j; \\
            0, &\text{if } i\neq j.
        \end{cases}
    \]
\item For $s \neq t$, $\check{r}_{(b_i,s), (b_j,t)}$ is defined by 
    \[
    b_j\cdot r_{s,t} = \sum_{b_i \in \mathfrak{B}} \check{r}_{(b_i,s), (b_j,t)} b_i.
    \]
    (This is well-defined since $\mathfrak{B}$ is a $\bbZ$-basis for $R$.)
\end{itemize}
Our assumption that the $R$-realisation $R\Lambda_S$ is geometric says that $-r_{s,t} \in R_{\geq 0}$ when $s\neq t$, hence $\check{r}_{(b_i,s), (b_j,t)} \in \bbZ_{\leq 0}$.
\begin{lemma} \label{lem:Zfaithful}
The $\bbZ$-bilinear form defined by the integral matrix 
\begin{equation}\label{eqn:unfoldCartan}
(\check{r}_{(b_i,s), (b_j,t)})_{(b_i,s), (b_j,t) \in \check{S}}
\end{equation}
is symmetric, and moreover it faithfully realises $(\check{\bbW}, \check{S})$ over $\bbZ$.
\end{lemma}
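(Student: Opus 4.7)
The proof proceeds in three steps: symmetry, identification of the matrix with the Cartan data of $\check{\Gamma}$, and faithfulness.

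First, to establish symmetry for $s \neq t$, expand $r_{s,t} = \sum_k c_k b_k$ with $c_k \in \bbZ_{\leq 0}$. Unwinding the definitions gives
\[
\check{r}_{(b_i,s),(b_j,t)} = \sum_k c_k \cdot {}_j c_{k,i},
\]
and since $r_{t,s} = r_{s,t}^* = \sum_k c_k b_{k^*}$,
\[
\check{r}_{(b_j,t),(b_i,s)} = \sum_k c_k \cdot {}_i c_{k^*,j}.
\]
Frobenius reciprocity (\cref{lem:Frobeniusrecip}) yields ${}_j c_{k,i} = {}_i c_{k^*,j}$, and the two sums coincide. The $s = t$ case is immediate from the definition.

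Second, I would check that $\check{r}$, viewed as a symmetric real matrix, satisfies the hypotheses of \cref{prop:realfaithfulrealisation} for $(\check{\bbW}, \check{S})$. The diagonal entries equal $2$ by construction. For $(b_i, s) \neq (b_j, t)$, the entries are non-positive integers: $0$ when $s = t$ by construction, and non-positive when $s \neq t$ since $-r_{s,t} \in R_{\geq 0}$ and $\mathfrak{B}$ is closed under multiplication. The Coxeter graph $\check{\Gamma}$ is defined precisely so that the label $\check{m}_{(b_i,s),(b_j,t)}$ is $2$, $3$, or $\infty$ according as $\check{r}_{(b_i,s),(b_j,t)}$ is $0$, $-1$, or $\leq -2$, matching the required values $-2\cos(\pi/\check{m})$ (or $\leq -2$ when $\check{m}=\infty$).

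Third, \cref{prop:realfaithfulrealisation} now delivers a faithful realisation of $(\check{\bbW}, \check{S})$ over $\bbR$ on $\bbR \otimes_{\bbZ} \check{\Lambda}_{\check{S}}$. Because $\check{r}$ has integer entries, the reflection formula \eqref{eqn:realisationrepresentation} preserves the $\bbZ$-sublattice $\check{\Lambda}_{\check{S}}$, giving a well-defined $\bbZ$-realisation. Faithfulness over $\bbZ$ is inherited from faithfulness over $\bbR$ via the inclusion $\check{\Lambda}_{\check{S}} \hookrightarrow \bbR \otimes_{\bbZ} \check{\Lambda}_{\check{S}}$: any $w \in \check{\bbW}$ acting trivially on the integral lattice also acts trivially on its real span.

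The main obstacle is the second step, specifically reconciling the combinatorial definition of $\check{\Gamma}$ (formulated via the coefficient of $b_j$ in $(-r_{s,t}) \cdot b_i$) with the matrix entry $\check{r}_{(b_i,s),(b_j,t)}$ (formulated via the coefficient of $b_i$ in $b_j \cdot r_{s,t}$). In the commutative case the identification is tautological; in general one must invoke Frobenius reciprocity together with the symmetry established in Step 1 to verify that these two descriptions produce the same non-negative integer, so that the Coxeter labels of $\check{\Gamma}$ really are read off from the matrix $\check{r}$.
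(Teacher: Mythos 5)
Your proof is correct and follows the paper's own argument essentially verbatim: symmetry of the integral matrix via Frobenius reciprocity combined with $r_{t,s}=r_{s,t}^*$, then verification that the matrix satisfies the hypotheses of \cref{prop:realfaithfulrealisation}, with integrality ensuring the reflections preserve the lattice $\bbZ\Lambda_{\check{S}}\subset\bbR\Lambda_{\check{S}}$ so that faithfulness descends. The only point where you go beyond the paper is the reconciliation flagged in your last paragraph, and there your proposed fix slightly overclaims: Frobenius reciprocity gives ${}_kc_{i,j}={}_{j^*}c_{k,i^*}$, so the coefficient of $b_j$ in $(-r_{s,t})\cdot b_i$ used to define $\check{\Gamma}$ equals $-\check{r}_{(b_{i^*},s),(b_{j^*},t)}$ rather than $-\check{r}_{(b_i,s),(b_j,t)}$ — hence for a non-commutative $R$ with non-self-dual basis elements the two descriptions agree only after the vertex relabelling $(b,s)\mapsto(b^*,s)$ of $\check{S}$, which is an isomorphism of Coxeter graphs and therefore harmless for the lemma (and invisible in all the commutative examples, where $?^*=\id$).
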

\begin{proof}
Using \cref{lem:Frobeniusrecip} combined with $r_{t,s} = r_{s,t}^*$, we get $\check{r}_{(b_i,s), (b_j,t)} = \check{r}_{(b_j,s), (b_i,t)}$, so the integral matrix is symmetric.
The lemma now follows from the observation that the integral matrix $(\check{r}_{(b_i,s), (b_j,t)})_{(b_i,s), (b_j,t) \in \check{S}}$ satisfies the conditions stated in \cref{prop:realfaithfulrealisation} -- since the bilinear form is integral, the restriction to $\bbZ\Lambda_{\check{S}} \subset \bbR\Lambda_{\check{S}}$ does not matter. 
\end{proof}
\begin{definition}
The $\bbZ$-realisation of $(\check{\bbW}, \check{S})$ with Cartan matrix in \eqref{eqn:unfoldCartan} is called the \emph{unfolded $\bbZ$-realisation} of $(\check{\bbW}, \check{S})$ with respect to $R\Lambda_S$.
The $\bbZ$-bilinear form is denoted by $B_\bbZ(-,-)$ and the realisation is denoted by $\bbZ\Lambda_{\check{S}}$.
\end{definition}
\begin{remark}
If one views $\bbZ$ as a fusion ring, the $\bbZ$-realisation above is also geometric.
\end{remark}

\begin{proposition} \label{prop:unfoldedidentification}
Let $R\Lambda_S$ be a geometric $R$-realisation of $(\bbW,S)$ and let $\bbZ\Lambda_{\check{S}}$ be the unfolded $\bbZ$-realisation of the unfolded Coxeter system $(\check{\bbW}, \check{S})$ over $\bbZ$.
The map defined on the generators by
\begin{align*}
    \varphi: \bbW &\rightarrow \check{\bbW} \\
    s &\mapsto \prod_{b_i \in \mathfrak{B}} (b_i,s)
\end{align*}
is a well-defined group homomorphism.
Moreover, the $\bbZ$-linear isomorphism defined on the $\bbZ$-basis elements via:
\begin{align*}
    \Psi:R\Lambda_S &\xrightarrow{\cong} \bbZ\Lambda_{\check{S}}:= \bigoplus_{(b_i,s)\in \check{S}} \bbZ \cdot \alpha_{b_i,s} \\
    b_i \alpha_s &\mapsto \alpha_{b_i,s}
\end{align*}
is $\bbW$-equivariant with respect to $\varphi$.
\end{proposition}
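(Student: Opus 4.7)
The plan is to reduce everything to the single intertwining identity $\Psi \circ s = \tilde s \circ \Psi$ for each generator $s \in S$, where $\tilde s \coloneqq \prod_{b_i \in \mathfrak B}(b_i,s) \in \check\bbW$. Once this is established at the level of generators, the faithfulness of the unfolded $\bbZ$-realisation (\cref{lem:Zfaithful}) upgrades it into the well-definedness of $\varphi$ as a group homomorphism, after which the $\bbW$-equivariance of $\Psi$ is automatic.

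First I would observe that $\tilde s$ is a well-defined involution, independent of the order in the product: by \cref{defn:unfoldedCox}, the graph $\check\Gamma$ has edges only between $(b_i,s)$ and $(b_j,t)$ with $s \ne t$, so for fixed $s$ the generators $\{(b_i,s)\}_{b_i \in \mathfrak B}$ mutually commute in $\check\bbW$. Next I would carry out the direct computation on $\bbZ$-basis vectors. On the fusion side, sesquilinearity gives $B_R(\alpha_s, b_j\alpha_t) = b_j \cdot r_{s,t}$, and expanding $b_j \cdot r_{s,t} = \sum_i \check r_{(b_i,s),(b_j,t)}\,b_i$ (valid for $s \ne t$ by the definition of $\check r$, and for $s = t$ by $r_{s,s} = 2$ together with $\check r_{(b_i,s),(b_j,s)} = 2\delta_{i,j}$) produces
\[
s(b_j\alpha_t) \;=\; b_j\alpha_t - \sum_{b_i \in \mathfrak B} \check r_{(b_i,s),(b_j,t)}\, b_i\alpha_s.
\]
On the unfolded side, computing $\tilde s(\alpha_{(b_j,t)})$ by applying the commuting reflections $(b_i,s)$ one by one and using that $(b_{i'},s)$ fixes $\alpha_{(b_i,s)}$ for $i \ne i'$ (since $\check r_{(b_{i'},s),(b_i,s)} = 0$), the contributions accumulate additively to
\[
\tilde s(\alpha_{(b_j,t)}) \;=\; \alpha_{(b_j,t)} - \sum_{b_i \in \mathfrak B} \check r_{(b_i,s),(b_j,t)}\,\alpha_{(b_i,s)},
\]
which is precisely $\Psi$ applied to the previous identity.

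Finally, I would promote this to the statement of the proposition. Since $\bbW$ acts on $R\Lambda_S$, the operators $s$ obey $s^2 = 1$ and $(st)^{m_{s,t}} = 1$ as automorphisms of $R\Lambda_S$; conjugating through the isomorphism $\Psi$ transports these to the corresponding equalities for $\tilde s$ and $(\tilde s\tilde t)^{m_{s,t}}$ as automorphisms of $\bbZ\Lambda_{\check S}$. Faithfulness of the unfolded $\bbZ$-realisation then lifts these equalities of automorphisms to equalities inside the group $\check\bbW$, so the assignment $s \mapsto \tilde s$ respects the Coxeter presentation and extends to a group homomorphism $\varphi: \bbW \to \check\bbW$. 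The $\bbW$-equivariance of $\Psi$ holds on generators by the intertwining and propagates to all of $\bbW$ by multiplicativity.

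I expect the main technical hurdle to be the unfolded-side bookkeeping — verifying that the action of $\tilde s$ on $\alpha_{(b_j,t)}$ really is the additive sum of the independent contributions of each $(b_i,s)$, rather than an iterated composition with side effects. This reduces to the two elementary identities $\check r_{(b_i,s),(b_{i'},s)} = 2\delta_{i,i'}$ and the commutativity of the $(b_i,s)$, but some care is needed in the induction to ensure that the order chosen to expand the product does not matter.
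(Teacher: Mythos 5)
Your proposal is correct and follows essentially the same route as the paper's proof: verify the intertwining identity $\Psi(s\cdot b_j\alpha_t) = \bigl(\prod_{b_i\in\mathfrak{B}}(b_i,s)\bigr)\cdot\alpha_{b_j,t}$ by direct computation on $\bbZ$-basis vectors, using that the $(b_i,s)$ pairwise commute and that $B_\bbZ(\alpha_{b_i,s},\alpha_{b_{i'},s})=0$ for $i\neq i'$ so the contributions add without side effects, and then invoke the faithfulness of the unfolded $\bbZ$-realisation (\cref{lem:Zfaithful}) to conclude both that $\varphi$ is a well-defined homomorphism and that $\Psi$ is equivariant. The bookkeeping concern you flag is resolved exactly as you suggest, and is handled the same way in the paper.
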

\begin{proof}
    It is clear that $\Psi$ is in fact a $\bbZ$-linear isomorphism.
    We therefore obtain a $\bbZ$-linear action of $\bbW$ on $\bbZ\Lambda_{\check{S}}$ induced by $\Psi$; namely each standard generator $s \in S \subset \bbW$ acts on $v \in \bbZ\Lambda_{\check{S}}$ by
    \[
    v \mapsto \Psi(s\cdot\Psi^{-1}(v)).
    \]
    
    We claim that for each $s \in S$, the induced action above agrees with the action of $\prod_{b_i \in \mathfrak{B}} (b_i,s) \in \check{\bbW}$.
    Since the action of $\check{\bbW}$ on $\bbZ\Lambda_{\check{S}}$ is faithful by \cref{lem:Zfaithful}, it then follows that $\varphi$ is well-defined, and that $\Psi$ is indeed $\bbW$-equivariant with respect to $\varphi$.

    Let us now prove the claim.
    Firstly, note that given $r \in R$ with
    \[
    r = \sum_{b_i \in \mathfrak{B}}c_{r,b_i} b_i, \qquad c_{r,b_i} \in \bbZ,
    \]
    the $\bbZ$-linear isomorphism $\Psi:R\Lambda_S \xrightarrow{\cong} \bbZ\Lambda_{\check{S}}$ just sends
    \[
    r \alpha_s \mapsto \sum_{b_i \in \mathfrak{B}}c_{r,b_i} \alpha_{b_i,s}
    \]
    In particular, for all $s,t \in S$ and all $b_j \in \mathfrak{B}$, it follows from the definition of $\check{r}_{(b_i,s), (b_j,t)}$ that $\Psi$ sends
    \begin{equation} \label{eqn:unfoldcoeff}
    (b_j\cdot r_{s,t})\alpha_s \mapsto \sum_{b_i \in \mathfrak{B}} \check{r}_{(b_i,s), (b_j,t)}\alpha_{b_i,s}
    \end{equation}
    Now $B_\bbZ(\alpha_{b_i,s},\alpha_{b_j,s})=0$ for all $i \neq j$ (the element $\prod_{b_i \in \mathfrak{B}} (b_i,s)$ is a product of pairwise commutative elements in $\bbW$), so we have
    \[
    \left(\prod_{b_i \in \mathfrak{B}} (b_i,s) \right) \cdot v
        = v - \sum_{b_i\in\mathfrak{B}}B_\bbZ(\alpha_{b_i,s},v)\alpha_{b_i,s}.
    \]
    For $v = \alpha_{b_j,t}$, we get
    \[
    \left(\prod_{b_i \in \mathfrak{B}} (b_i,s) \right) \cdot \alpha_{b_j,t}
        = \alpha_{b_j,t} - \sum_{b_i\in\mathfrak{B}}\check{r}_{(b_i,s), (b_j,t)}\alpha_{b_i,s}
    \]
    On the other hand, the action of $s \in S$ on $b_j\cdot\alpha_t \in R\Lambda_S$ is given by
    \[
    s \cdot (b_j\alpha_t) = 
        b_j\alpha_t - B_R(\alpha_s,b_j\alpha_t)\alpha_s =
        b_j\alpha_t - (b_j\cdot r_{s,t})\alpha_s.
    \]
    Using \eqref{eqn:unfoldcoeff}, we get that for each $s \in S \subset \bbW$ and each $v \in \bbZ\Lambda_{\check{S}}$,
    \[
    \Psi\left(s\cdot \Psi^{-1}(v)\right) = \left(\prod_{b_i\in\mathfrak{B}} (b_i,s)\right)\cdot v,
    \]
    which proves our claim.
\end{proof}

\begin{definition}\label{defn:unfoldinghomomorphism}
    The group homomorphism 
        \begin{align*}
        \varphi: \bbW &\rightarrow \check{\bbW} \\
        s &\mapsto \prod_{b_i \in \mathfrak{B}} (b_i,s)
        \end{align*}
    in \cref{prop:unfoldedidentification} is called the \emph{unfolding homomorphism} with respect to $R\Lambda_S$.
\end{definition}
We will show later in \cref{cor:Coxgroupinjection} that the unfolding homomorphism is always injective.

\section{Vinberg systems and embeddings of hyperplane complements}
Throughout this section, $(\bbW,S)$ is a Coxeter system and we fix $R\Lambda_S$ to be a geometric realisation of $(\bbW,S)$ over a fusion ring $R$ with basis $\mathfrak{B}$, with Cartan matrix $(r_{s,t})_{s,t\in S}$.
\subsection{Contragredient representations from fusion rings} \label{sec:contragredientfusionrep}
We endow $\bbR$ with the structure of a left $R$-module via the ring homomorphism
\[
\FPdim:R \ra \bbR
\]
given by the Frobenius--Perron dimension map (see \cref{defn:FPdim} and \cref{prop:FPdimproperties}).
Using this, the space of $R$-linear morphisms from $R\Lambda_S$ to $\bbR$ is well-defined, which we shall denote by $\Hom_R(R\Lambda_S, \bbR)$.
We can therefore consider the $\bbR$-linear representation of $\bbW$ on $\Hom_R(R\Lambda_S, \bbR)$, namely
\[
(w\cdot Z)(v) = Z(w^{-1}\cdot v), \qquad \text{for each } w\in \bbW,
\]
where $w^{-1}\cdot v$ is now defined by the action of $\bbW$ on $R\Lambda_S$.

Let $\bbR\Lambda_S$ be the geometric $\bbR$-realisation with Cartan matrix $(\FPdim(r_{s,t}))_{s,t\in S}$ -- recall that it is geometric since $R\Lambda_S$ is geometric.
Let $\Hom_\bbR(\bbR\Lambda_S,\bbR)$ be the corresponding (dual) contragredient representation; the definition of the $\bbW$-action on $\Hom_\bbR(\bbR\Lambda_S,\bbR)$ is similar as before.
We shall relate the representations $\Hom_R(R\Lambda_S,\bbR)$ and $\Hom_\bbR(\bbR\Lambda_S,\bbR)$ via the following map:
\begin{definition}
The group homomorphism $\widetilde{\FPdim}: R\Lambda_S \ra \bbR\Lambda_S$ is defined by sending $r\cdot \alpha_s \mapsto \FPdim(r)\alpha_s$ for each $s \in S$, and extend $\bbZ$-linearly.
\end{definition}
Note that $\widetilde{\FPdim}$ relates the two bilinear forms $B_\bbR(-,-)$ and $B_R(-,-)$ as follows:
\begin{equation} \label{eqn:bilinearformsFPdim}
\FPdim(B_R(u,v)) = B_\bbR(\widetilde{\FPdim}(u),\widetilde{\FPdim}(v))    
\end{equation}
for all $u,v \in R\Lambda_S$.
Indeed, the following are also immediate since $B_R$ and $B_\bbR$ are both bilinear forms:
\begin{align*}
    \FPdim(B_R(\alpha_s + \alpha_t, \alpha_u)) &= B_\bbR(\widetilde{\FPdim}(\alpha_s+\alpha_t),\widetilde{\FPdim}(\alpha_u)) \\
    \FPdim(B_R(\alpha_s,\alpha_t+\alpha_u)) &= B_\bbR(\widetilde{\FPdim}(\alpha_s),\widetilde{\FPdim}(\alpha_t + \alpha_u)). 
\end{align*}
Moreover, since $\FPdim(r) = \FPdim(r^*)$, we have
\begin{align*}
\FPdim(B_R(r_1\cdot \alpha_s,r_2\cdot \alpha_t)) 
    &= \FPdim(r_2B_R(\alpha_s,\alpha_t)r_1^*) \\
    &= \FPdim(r_2)\FPdim(B_R(\alpha_s,\alpha_t))\FPdim(r_1),    
\end{align*}
which agrees with
\begin{align*}
B_\bbR(\widetilde{\FPdim}(r_1\cdot \alpha_s),\widetilde{\FPdim}(r_2 \cdot \alpha_t)) 
    &= B_\bbR(\FPdim(r_1)\alpha_s, \FPdim(r_2)\alpha_t) \\
    &= \FPdim(r_1)\FPdim(r_2)B_\bbR(\alpha_s,\alpha_t).
\end{align*}

However, we warn the reader that $\widetilde{\FPdim}$ need not be an injective group homomorphism, since $\FPdim$ itself need not be injective.
\begin{proposition} \label{prop:identifywithdualspace}
Let $R\Lambda_S$ be a geometric realisation of $(\bbW,S)$ over $R$.
Then the $\bbR$-linear map 
\begin{align*}
? \circ \widetilde{\FPdim}: \Hom_\bbR(\bbR\Lambda_S,\bbR) &\ra \Hom_R(R\Lambda_S,\bbR) \\
Z &\mapsto \widetilde{Z}:= Z\circ \widetilde{\FPdim}
\end{align*} 
is a $\bbW$-equivariant isomorphism.
\end{proposition}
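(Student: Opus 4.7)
The plan is to reduce everything to a single key fact: that the map $\widetilde{\FPdim} : R\Lambda_S \to \bbR\Lambda_S$ is itself $\bbW$-equivariant. Once that is established, the proposition follows by routine unwinding of the definitions.

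\textbf{Step 1 (the image lies in $\Hom_R(R\Lambda_S,\bbR)$ and the map is $\bbR$-linear).} Given $Z \in \Hom_\bbR(\bbR\Lambda_S,\bbR)$, one first checks that $\widetilde{\FPdim}(r\cdot v) = \FPdim(r)\,\widetilde{\FPdim}(v)$ for all $r \in R$ and $v \in R\Lambda_S$; this is immediate from the definition of $\widetilde{\FPdim}$ on the basis elements $r'\alpha_s$ together with the ring-homomorphism property of $\FPdim$. Hence $\widetilde{Z} = Z \circ \widetilde{\FPdim}$ satisfies $\widetilde{Z}(r\cdot v) = \FPdim(r)\cdot Z(\widetilde{\FPdim}(v)) = r\cdot \widetilde{Z}(v)$, where $\bbR$ carries its $R$-module structure through $\FPdim$. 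So $\widetilde{Z} \in \Hom_R(R\Lambda_S,\bbR)$, and $Z \mapsto Z \circ \widetilde{\FPdim}$ is manifestly $\bbR$-linear.

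\textbf{Step 2 ($\bbW$-equivariance of $\widetilde{\FPdim}$).} It suffices to verify $\widetilde{\FPdim}(s(v)) = s(\widetilde{\FPdim}(v))$ for each generator $s \in S$ and each $v \in R\Lambda_S$. Using the reflection formula \eqref{eqn:realisationrepresentation}, the fact that $\widetilde{\FPdim}(r\alpha_s) = \FPdim(r)\alpha_s$, and the identity \eqref{eqn:bilinearformsFPdim} applied to $(\alpha_s, v)$ (noting $\widetilde{\FPdim}(\alpha_s) = \alpha_s$), one computes
\[
\widetilde{\FPdim}(s(v)) = \widetilde{\FPdim}(v) - \FPdim(B_R(\alpha_s,v))\,\alpha_s = \widetilde{\FPdim}(v) - B_\bbR(\alpha_s,\widetilde{\FPdim}(v))\,\alpha_s = s(\widetilde{\FPdim}(v)).
\]
From this, $\bbW$-equivariance of $?\circ\widetilde{\FPdim}$ is formal: for any $w \in \bbW$ and $v \in R\Lambda_S$, $(w\cdot Z)\circ \widetilde{\FPdim}(v) = Z(w^{-1}\widetilde{\FPdim}(v)) = Z(\widetilde{\FPdim}(w^{-1}v)) = (w\cdot \widetilde{Z})(v)$.

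\textbf{Step 3 (bijectivity).} For injectivity, if $Z\circ\widetilde{\FPdim} = 0$, then evaluating at $\alpha_s$ gives $0 = Z(\widetilde{\FPdim}(\alpha_s)) = Z(\alpha_s)$ for every $s \in S$, so $Z = 0$ by $\bbR$-linearity. For surjectivity, given $\widetilde{Z} \in \Hom_R(R\Lambda_S,\bbR)$, define $Z \in \Hom_\bbR(\bbR\Lambda_S,\bbR)$ by $Z(\alpha_s) \coloneqq \widetilde{Z}(\alpha_s)$ and extend $\bbR$-linearly; then $Z \circ \widetilde{\FPdim}(r\alpha_s) = \FPdim(r)\,\widetilde{Z}(\alpha_s) = \widetilde{Z}(r\alpha_s)$ using $R$-linearity of $\widetilde{Z}$, and $\bbZ$-linearity extends the equality to all of $R\Lambda_S$.

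The only nontrivial content is Step 2, but it is not really an obstacle since the formula \eqref{eqn:bilinearformsFPdim} was already observed in the text; everything else is a direct unwinding of definitions.
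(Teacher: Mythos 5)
Your proposal is correct and follows essentially the same route as the paper: both arguments hinge on the identity \eqref{eqn:bilinearformsFPdim} relating $B_R$ and $B_\bbR$ via $\FPdim$ and $\widetilde{\FPdim}$ for equivariance, and both establish bijectivity by noting that morphisms on either side are determined by their values on the $\alpha_s$. You merely spell out in more detail what the paper asserts briefly -- in particular the intermediate observation that $\widetilde{\FPdim}$ is itself $\bbW$-equivariant, and the check that $\widetilde{Z}$ really lands in $\Hom_R(R\Lambda_S,\bbR)$ -- which is a reasonable expansion rather than a different method.
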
 
\begin{proof}
It is immediate that the linear map defined is an isomorphism of  $\bbR$-vector spaces.
Indeed, the dimensions (over $\bbR$) are equal since in both cases the morphisms are completely determined by the images of $\alpha_s$ for all $s \in S$, and the map is clearly surjective.
What remains is to show that the linear map is equivariant with respect to the $\bbW$-actions.

Recall from \eqref{eqn:bilinearformsFPdim} that the bilinear form $B_R(-,-)$ is identified with the bilinear form $B_\bbR(-,-)$ via $\FPdim$ and $\widetilde{\FPdim}$.
Since the two $\bbW$-actions are defined on each generator $s \in \bbW$ using the respective bilinear forms, and the $R$-module structure on $\bbR$ is defined precisely via $\FPdim$, it follows that the linear map is moreover $\bbW$-equivariant.
\end{proof}

As a corollary, we have the following, which says that every geometric $R$-realisation is in fact faithful.
\begin{corollary}\label{cor:faithfulfusionrealisation}
    Let $R\Lambda_S$ be a geometric $R$-realisation of $(\bbW,S)$.
    Then the contragredient $\bbW$-representation on $\Hom_R(R\Lambda_S,\bbR)$ is faithful, and the realisation $R\Lambda_S$ is also faithful.
\end{corollary}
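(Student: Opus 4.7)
The plan is to deduce both faithfulness statements in one stroke by chaining together the two preceding results.

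First, I would invoke the fact that the contragredient representation of a geometric $\bbR$-realisation is faithful. The hypothesis that $R\Lambda_S$ is a geometric $R$-realisation, combined with condition \eqref{eqn:FPdimcondition} in \cref{defn:geometricfusionrep} and the fact that $\FPdim$ is a ring homomorphism (\cref{prop:FPdimproperties}), guarantees that the real Cartan matrix $(\FPdim(r_{s,t}))_{s,t \in S}$ satisfies the hypotheses of \cref{prop:realfaithfulrealisation}. Hence $\bbR\Lambda_S$ is a geometric $\bbR$-realisation, and its contragredient representation on $\Hom_\bbR(\bbR\Lambda_S,\bbR)$ is faithful.

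Next, I would transport this faithfulness across the isomorphism of \cref{prop:identifywithdualspace}. Since $? \circ \widetilde{\FPdim}$ is a $\bbW$-equivariant $\bbR$-linear isomorphism, and the source $\Hom_\bbR(\bbR\Lambda_S,\bbR)$ is a faithful $\bbW$-representation, the target $\Hom_R(R\Lambda_S,\bbR)$ is also a faithful $\bbW$-representation. This proves the first assertion.

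Finally, for the second assertion, I would use the standard fact that faithfulness of a contragredient representation implies faithfulness of the original: if $w \in \bbW$ acts as the identity on $R\Lambda_S$, then for every $Z \in \Hom_R(R\Lambda_S,\bbR)$ and every $v \in R\Lambda_S$ we have $(w \cdot Z)(v) = Z(w^{-1} \cdot v) = Z(v)$, so $w$ acts as the identity on $\Hom_R(R\Lambda_S,\bbR)$. By the faithfulness just established, $w = 1$, hence $R\Lambda_S$ is faithful. There is no serious obstacle here; the whole argument is a short chain of implications, and the only thing to verify carefully is that the hypotheses of \cref{prop:realfaithfulrealisation} are genuinely met by $(\FPdim(r_{s,t}))_{s,t \in S}$, which is immediate from \cref{defn:geometricfusionrep}.
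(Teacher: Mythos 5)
Your proposal is correct and follows essentially the same route as the paper: both deduce faithfulness of $\Hom_R(R\Lambda_S,\bbR)$ by transporting the known faithfulness of the contragredient geometric $\bbR$-representation across the $\bbW$-equivariant isomorphism of \cref{prop:identifywithdualspace}, and then pass to $R\Lambda_S$ itself. Your spelled-out final step (an element acting trivially on $R\Lambda_S$ acts trivially on its dual, hence is the identity) is exactly the implication the paper leaves implicit.
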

\begin{proof}
    Using \cref{prop:identifywithdualspace}, we have that the $\bbW$-representation $\Hom_R(R\Lambda_S,\bbR)$ is faithful since the contragredient geometric representation $\Hom_\bbR(\bbR\Lambda_S,\bbR)$ is faithful.
    The faithfulness of the $\bbW$-representation $R\Lambda_S$ follows from the faithfulness on $\Hom_R(R\Lambda_S, \bbR)$.
\end{proof}


\subsection{Unfolding for the contragredient representation} \label{sec:unfoldcontragredientfusionrep}
Recall again that $R$ is a free $\bbZ$-module and that any $R$-linear morphism is automatically $\bbZ$-linear (i.e.\ a morphism of abelian groups).
This leads to the following immediate consequences:
\begin{enumerate}
    \item We have a $\bbR$-linear subspace inclusion
    \[
    \Hom_R(R\Lambda_S, \bbR) \subseteq \Hom_\bbZ(R\Lambda_S, \bbR).
    \]
    (This is $\Theta \subseteq \check{\Theta}$ in the Introduction.)
    \item The contragredient action of $\bbW$ on $\Hom_R(R\Lambda_S,\bbR)$ extends to the larger space $\Hom_\bbZ(R\Lambda_S, \bbR)$.
\end{enumerate}

The following (dual) proposition follows immediately from \cref{prop:unfoldedidentification}:
\begin{proposition} \label{prop:dualunfoldedidentification}
Let $R\Lambda_S$ be a geometric realisation of $(\bbW,S)$ over $R$, and let $\bbZ\Lambda_{\check{S}}$ be the unfolded realisation of the unfolded Coxeter system $(\check{\bbW}, \check{S})$ over $\bbZ$.
The $\bbR$-linear inclusion
\begin{align*}
    \Hom_R(R\Lambda_S, \bbR) \subseteq \Hom_\bbZ(R\Lambda_S, \bbR) &\xrightarrow{\cong} \Hom_\bbZ(\bbZ\Lambda_{\check{S}}, \bbR)
\end{align*}
is $\bbW$-equivariant with respect to the unfolding homomorphism $\varphi: \bbW \to \check{\bbW}$ (see \cref{defn:unfoldinghomomorphism}).
\end{proposition}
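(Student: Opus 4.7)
The plan is to deduce the statement essentially by dualising Proposition \ref{prop:unfoldedidentification}, since that proposition already provides all the hard work (the existence of $\varphi$ and a $\bbW$-equivariant identification of underlying $\bbZ$-modules). What needs to be checked here is that the equivariance survives when we pass to the contravariant side and restrict along the inclusion $\Hom_R(R\Lambda_S,\bbR) \subseteq \Hom_\bbZ(R\Lambda_S,\bbR)$.

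First I would take the $\bbZ$-linear isomorphism $\Psi : R\Lambda_S \xrightarrow{\cong} \bbZ\Lambda_{\check{S}}$ from \cref{prop:unfoldedidentification} and apply the contravariant functor $\Hom_\bbZ(-,\bbR)$ to it. This yields an $\bbR$-linear isomorphism
\[
\Psi^* \colon \Hom_\bbZ(\bbZ\Lambda_{\check{S}},\bbR) \xrightarrow{\cong} \Hom_\bbZ(R\Lambda_S,\bbR), \qquad f \mapsto f\circ \Psi.
\]
The identification announced in the statement is precisely $(\Psi^*)^{-1}$ composed with the inclusion of $\Hom_R$ into $\Hom_\bbZ$. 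The $\bbW$-equivariance of $\Psi^*$ is then a direct translation of the $\bbW$-equivariance of $\Psi$: for $w \in \bbW$, $f \in \Hom_\bbZ(\bbZ\Lambda_{\check{S}},\bbR)$ and $v \in R\Lambda_S$,
\[
\Psi^*(\varphi(w)\cdot f)(v) = f(\varphi(w)^{-1}\Psi(v)) = f(\Psi(w^{-1}\cdot v)) = (w\cdot \Psi^*(f))(v),
\]
where the middle equality uses $\Psi(w^{-1}\cdot v) = \varphi(w)^{-1}\cdot \Psi(v)$ from \cref{prop:unfoldedidentification}.

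Next I would verify that the inclusion $\Hom_R(R\Lambda_S,\bbR) \subseteq \Hom_\bbZ(R\Lambda_S,\bbR)$ is itself $\bbW$-equivariant. This is essentially tautological: both $\bbW$-actions are defined by precomposition with $w^{-1}$ acting on $R\Lambda_S$, and that action is $R$-linear, so the $R$-linearity of a functional is preserved under the $\bbW$-action, making the inclusion a $\bbW$-equivariant embedding of $\bbR$-vector spaces. Composing this inclusion with $(\Psi^*)^{-1}$ yields the map in the statement, and it is $\bbW$-equivariant with respect to $\varphi$ as the composition of two equivariant maps.

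The only minor subtlety I would be careful about is bookkeeping the direction of equivariance (the action on the source is via $\varphi$ and on the target is the natural $\bbW$-action), and distinguishing the $\bbR$-linear structure on $\Hom_\bbZ(R\Lambda_S,\bbR)$ (inherited from $\bbR$ in the target) from any residual $R$-module structure — none is needed, since we work entirely with $\bbZ$-linear functionals at this stage. No additional computation beyond the dualisation sketched above is required, so I do not expect any real obstacle.
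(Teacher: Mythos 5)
Your proposal is correct and matches the paper's approach: the paper simply asserts that the proposition "follows immediately from Proposition \ref{prop:unfoldedidentification}", and your argument is exactly the intended dualisation, spelling out that applying $\Hom_\bbZ(-,\bbR)$ to the $\bbW$-equivariant isomorphism $\Psi$ transports the equivariance, and that the inclusion $\Hom_R(R\Lambda_S,\bbR)\subseteq\Hom_\bbZ(R\Lambda_S,\bbR)$ is tautologically equivariant because the $\bbW$-action on $R\Lambda_S$ is $R$-linear. No gaps.
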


\begin{corollary} \label{cor:Coxgroupinjection}
    The unfolding homomorphism $\varphi: \bbW \rightarrow \check{\bbW}$
    is injective. 
\end{corollary}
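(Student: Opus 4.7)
The plan is to derive injectivity of $\varphi$ from the faithfulness of the contragredient representation $\Hom_R(R\Lambda_S, \bbR)$, together with the $\bbW$-equivariance of the inclusion established in \cref{prop:dualunfoldedidentification}. Everything is already in place — this is the punchline of the preceding setup rather than a separate calculation.

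More concretely, suppose $w \in \bbW$ satisfies $\varphi(w) = 1_{\check{\bbW}}$. First I would invoke \cref{prop:dualunfoldedidentification}: the inclusion
\[
\iota: \Hom_R(R\Lambda_S, \bbR) \hookrightarrow \Hom_\bbZ(R\Lambda_S, \bbR) \cong \Hom_\bbZ(\bbZ\Lambda_{\check{S}}, \bbR)
\]
intertwines the contragredient $\bbW$-action on the left with the contragredient $\check{\bbW}$-action on the right along $\varphi$. Thus for every $Z \in \Hom_R(R\Lambda_S, \bbR)$,
\[
\iota(w \cdot Z) = \varphi(w) \cdot \iota(Z) = 1_{\check{\bbW}} \cdot \iota(Z) = \iota(Z).
\]
Since $\iota$ is injective, this forces $w \cdot Z = Z$ for all such $Z$.

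Next I would apply \cref{cor:faithfulfusionrealisation}, which says that the contragredient $\bbW$-representation on $\Hom_R(R\Lambda_S, \bbR)$ is faithful. The conclusion $w \cdot Z = Z$ for every $Z$ then yields $w = 1_\bbW$, establishing that $\ker(\varphi)$ is trivial.

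There is no real obstacle here, since the two nontrivial inputs — the faithfulness of the contragredient fusion representation and the equivariance of the inclusion — have already been proved. The only thing to be careful about is to use the \emph{contragredient} representation rather than $R\Lambda_S$ itself: the equivariance statement in \cref{prop:dualunfoldedidentification} lives on the dual side, and it is crucial to combine it with the faithfulness on the dual side rather than on $R\Lambda_S$, in order not to need any further hypothesis on the unfolded realization $\bbZ\Lambda_{\check{S}}$.
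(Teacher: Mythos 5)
Your proof is correct and follows essentially the same route as the paper: both arguments combine the $\bbW$-equivariance of the inclusion from \cref{prop:dualunfoldedidentification} with the faithfulness of the contragredient representation on $\Hom_R(R\Lambda_S,\bbR)$ from \cref{cor:faithfulfusionrealisation} to kill any $w$ with $\varphi(w)=1$. The only cosmetic difference is that the paper additionally cites \cref{lem:Zfaithful}, which your (slightly more streamlined) argument shows is not actually needed for injectivity of $\varphi$.
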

\begin{proof}
    By \cref{cor:faithfulfusionrealisation} we have that the action of $\bbW$ on $\Hom_R(R\Lambda_S, \bbR)$ is faithful.
    Similarly, by \cref{lem:Zfaithful}, the action of $\check{\bbW}$ on $\Hom_\bbZ(R\Lambda_S,\bbR) \cong \Hom_\bbZ(\bbZ\Lambda_{\check{S}},\bbR)$ is faithful.
    The statement now follows from the fact that the (forgetful) map which sends $R$-linear automorphisms to $\bbZ$-linear automorphisms is injective.
\end{proof}

\begin{remark}\label{rem:unfoldinginjective}
    We expect the unfolding homomorphism $\varphi$ to be a map induced by a strong admissible partition of $\check{\Gamma}$, where injectivity is implied; see \cref{sec:partitionfoldingLCM} for more details.
\end{remark}

\subsection{Vinberg systems, hyperplane systems and hyperplane complements}
We first recall the definition of a Vinberg system; see e.g.\ \cite{paris2012kpi1}.
\begin{definition}
Let $V$ be a finite-dimensional real vector space.
Let $C \subset V$ be a closed polyhedral cone with nonempty interior, and let $C^\circ$ be its interior.
A \emph{wall} of $C^\circ$ is the support of a codimension one face of $C^\circ$, i.e. the hyperplane corresponding to that face.
Let $H_1, \dots, H_n$ be the walls of $C^\circ$.
Choose $S:= \{s_1, \dots, s_n\}$ to be a set of reflections in $V$, so that each $s_i$ fixes the hyperplane $H_i$. 
Note that there is no orthogonality condition in this definition -- there are infinitely many reflections that fix a given $H_i$, and we choose one $s_i$ among them.
Let $W$ denote the subgroup of $GL(V)$ generated by $S$.
If $w \cdot C^\circ \cap C^\circ = \emptyset \Leftrightarrow w = \id$, then $(C^\circ, S)$ is called a \emph{Vinberg pair}, and $C^\circ$ is called the \emph{fundamental chamber}.
The pair $(W,S)$ is also called a \emph{Vinberg system}.
\end{definition}

The following theorem of Vinberg is crucial:
\begin{theorem}[\cite{Vinberg_reflectiongroups}] 
    Let $(W,S)$ be a Vinberg system with fundamental chamber $C^\circ$.
    Set $T \coloneqq \bigcup_{w \in W} w \cdot C \subseteq V$.
    Then the following statements hold.
    \begin{enumerate}
        \item $T$ is a convex cone with non-empty interior.
        \item The interior $T^\circ$ of $T$ is invariant under the action of $W$, and the action of $W$ on $T^\circ$ is properly discontinuous.
        \item Let $x \in T$. Then $x \in T^\circ$ if and only if the stabiliser of $x$ is a finite subgroup of $W$.
        \item Let $x \in T^\circ$ be a point with non-trivial stabiliser. Then there exists a reflection $r \in \mathcal{R} \coloneqq \{ ws_iw^{-1} \in W \mid w\in W, s_i \in S\}$ such that $r\cdot x = x$.
    \end{enumerate}
    Moreover, $(W,S)$ is a Coxeter system and we call $T^\circ$ the Tits cone of $(W,S)$.
\end{theorem}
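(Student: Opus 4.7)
The plan is to reproduce Vinberg's original strategy from \cite{Vinberg_reflectiongroups}, proceeding in three stages: establishing the Coxeter structure on $(W,S)$, proving convexity of $T$ together with the openness of $T^\circ$, and finally analysing stabilisers and proper discontinuity on $T^\circ$.

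First, I would verify that $(W,S)$ is a Coxeter system by reducing to rank two. For each pair $s_i \neq s_j \in S$, the walls $H_i, H_j$ bound a two-dimensional sector in the quotient $V/(H_i \cap H_j)$ that contains the image of $C^\circ$. The Vinberg condition $w \cdot C^\circ \cap C^\circ = \emptyset \Leftrightarrow w = \id$, applied to words in $s_i, s_j$, forces $\langle s_i, s_j \rangle$ to act as a standard dihedral group on this plane with fundamental chamber the image of $C^\circ$; reading off the order $m_{ij}$ of $s_i s_j$ (possibly $\infty$) yields the defining Coxeter relations. This equips $W$ with a length function $\ell$, the set of reflections $\mathcal{R} = \{w s_i w^{-1}\}$, and gallery combinatorics on the chamber system $\{wC\}_{w \in W}$.

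For part (i), I would induct on $n$ to show that $T_n \coloneqq \bigcup_{\ell(w) \leq n} wC$ is convex. The inductive step rests on local convexity across each wall: if $\ell(sw) = \ell(w)+1$ with $s \in S$, then the closed chambers $wC$ and $swC$ meet along the face $w(C \cap H_s)$, and their union is convex there because $s$ is a linear reflection fixing that hyperplane. The main obstacle is patching these local convexity statements into a global one: one must show that the line segment joining $x \in wC$ to $y \in w'C$ traverses a sequence of chambers matching some reduced gallery from $w$ to $w'$, and never exits $T$. This requires the exchange condition in $(W,S)$ together with a careful geometric analysis of how a generic line meets the wall arrangement. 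Once $T$ is convex and contains $C^\circ$, its interior $T^\circ$ is automatically an open convex cone, and $W$-stability of $T^\circ$ follows from the $W$-action on the chamber system.

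For parts (ii)--(iv), I would establish local finiteness: every $x \in T^\circ$ lies in only finitely many closed chambers $wC$. Granted this, the action on $T^\circ$ is properly discontinuous, and the stabiliser $W_x$ permutes the finite set of chambers through $x$ and is therefore finite. Conversely, for $x \in T \setminus T^\circ$ one exhibits reflections fixing $x$ along a boundary face whose iterates generate an infinite subgroup of $W_x$, proving (iii). Local finiteness itself I would prove by contradiction: an accumulation of translates $w_k C$ at $x \in T^\circ$ would produce infinitely many walls through arbitrarily small neighbourhoods of $x$, contradicting the polyhedral finiteness of the walls of $C$ combined with the Vinberg condition via a compactness argument. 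Finally, (iv) follows from standard Coxeter-group theory applied to the finite group $W_x$: acting by reflections on $V$, it is generated by the reflections it contains, each of which must fix some wall $wH_i$ through $x$ and hence lies in $\mathcal{R}$.
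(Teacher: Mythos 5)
The paper does not prove this statement: it is quoted verbatim from Vinberg's paper \cite{Vinberg_reflectiongroups} (with the Tits--Bourbaki case appearing again as \cref{thm:Vinbergfromreal}), so there is no in-paper argument to compare yours against. Judged on its own terms, your outline does follow the standard Vinberg/Bourbaki strategy and identifies the right ingredients: rank-two analysis of pairs of walls, induction on word length for convexity of $T$, local finiteness and parabolic stabilisers for (ii)--(iv).

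There is, however, one structural gap worth naming. You establish the Coxeter presentation of $(W,S)$ \emph{first}, by reading off the orders $m_{ij}$ from the rank-two quotients, and only then run the convexity induction ``using the exchange condition.'' But computing the order of each $s_is_j$ does not by itself show that the dihedral relations are \emph{defining} relations, and the exchange condition is not available until that is known. In Vinberg's actual argument these are not separable: the central lemma, proved by a single induction on word length, is that each chamber $wC^\circ$ lies strictly on one side of every wall $H_s$, with the side determined by whether $\ell(sw)>\ell(w)$; the Coxeter presentation, the exchange condition, the freeness of the action on chambers, and the convexity of $T$ are all extracted from this one statement. Your plan implicitly assumes the conclusion of that lemma (a length function with the exchange property) as an input to the step that is supposed to prove it. Similarly, in (iii) the direction ``finite stabiliser implies interior point'' is not really a matter of exhibiting infinitely many reflections at boundary points; the standard route is to show that for $x\in C$ with $W_x=W_J$ finite, the finitely many chambers $wC$ with $w\in W_J$ cover a neighbourhood of $x$ (because the Tits cone of a finite reflection group is the whole space), and your sketch does not supply that. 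None of this makes the plan wrong, but these are precisely the points where the proof lives, and they are the ones left as ``careful geometric analysis'' in your write-up.
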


Given a Vinberg system $(W,S)$, for each reflection $r \in \mathcal{R} \subset W$, let $H_r \subset V$ denote the hyperplane fixed by $r$.
Then $\mathcal{A} \coloneqq \{ H_r \mid r \in \mathcal{R}\}$ is a \emph{hyperplane system} in $T^\circ$ -- this means that $\mathcal{A}$ is a (not necessarily finite) subset of hyperplanes in $V$ such that
\begin{enumerate}
	\item for all $H \in \mathcal{A}$, $H \cap T^\circ \neq \emptyset$; and 
	\item (locally finite) every point $x \in T^\circ$ has an open neighbourhood $U_x$ such that $U_x$ only intersects with finitely many hyperplanes in $\mathcal{A}$.
\end{enumerate}
\begin{definition} \label{defn:complexhyperplanecomplement}
Let $(W,S)$ be a Vinberg system with Tits cone $T^\circ$ and hyperplane system $\mathcal{A}$.
The (complexified) \emph{hyperplane complement} $\Omega_{\reg}$ associated to $(W,S)$ is defined as
\[
\Omega_{\reg} \coloneqq (T^\circ \times T^\circ) \setminus \cup_{H \in \mathcal{A}} (H \times H)
\]
\end{definition}

It follows from Vinberg's result above that the $W$-action on $\Omega_{\reg}$ is free and properly discontinuous.
Moreover, Van der Lek \cite{VdL_thesis} shows that
\[
\pi_1(\Omega_{\reg}/W) \cong \bbB(W,S)
\]
for any Vinberg system, where $\bbB(W,S)$ is the Artin--Tits group associated to the Coxeter system $(W,S)$ (see \cref{defn:Coxsystem}).

The following is a classical result by Tits and Vinberg, which shows that every Coxeter system $(\bbW,S)$ can be viewed as a Vinberg system using the contragredient (dual) representation associated to a geometric $\bbR$-realisation.
\begin{theorem}[\cite{Bourbaki_456,Vinberg_reflectiongroups}] \label{thm:Vinbergfromreal}
    Let $(\bbW,S)$ be a Coxeter system, $\bbR\Lambda_S$ a geometric $\bbR$-realisation and $\Hom_\bbR(\bbR\Lambda_S,\bbR)$ its corresponding contragredient representation.
    Let $C_\bbR := \{ Z \in \Hom_\bbR(\bbR\Lambda_S,\bbR) \mid Z(\alpha_s) \geq 0 \text{ for all } s \in \Gamma_0 \}$.
    The following statements hold:
    \begin{enumerate}
        \item $C_\bbR$ is a closed convex polyhedral cone with non-empty interior
        \[
        C_\bbR^\circ = \{ Z \in \Hom_\bbR(\bbR\Lambda_S,\bbR) \mid \forall s\in S : Z(\alpha_s) > 0 \}.
        \]
        \item The walls of $C_\bbR^\circ$ are given by $H_{\alpha_s} \coloneqq \{Z \in \Hom_\bbR(\bbR\Lambda_S,\bbR) \mid Z(\alpha_s) = 0\}$ for each $s \in S$.
        \item Each $s \in S$ fixes exactly $H_{\alpha_s}$.
        \item $w \cdot C_\bbR^\circ \cap C_\bbR^\circ \neq \emptyset$ if and only if $w = \id$.
    \end{enumerate}
    In particular, the contragredient representation of $\bbW$ makes $(\bbW,S)$ into a Vinberg system with fundamental chamber $C_\bbR^\circ$.
\end{theorem}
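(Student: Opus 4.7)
\emph{Proof proposal.} Items (i)--(iii) are direct verifications; the Tits lemma (iv) is the core of the argument.

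For (i), $C_\bbR$ is cut out by finitely many closed half-spaces $\{Z(\alpha_s) \geq 0\}$, so it is a closed convex polyhedral cone. Since $\{\alpha_s\}_{s\in S}$ is an $\bbR$-basis of $\bbR\Lambda_S$ by construction, the dual basis $\{\alpha_s^\vee\}_{s\in S}$ of $\Hom_\bbR(\bbR\Lambda_S,\bbR)$ exists, and $\sum_{s \in S} \alpha_s^\vee$ lies in $C_\bbR^\circ = \{Z : Z(\alpha_s) > 0 \text{ for all } s \in S\}$, which is therefore non-empty. For (ii), the same linear independence of the functionals $Z \mapsto Z(\alpha_s)$ ensures that the hyperplanes $H_{\alpha_s}$ are pairwise distinct and each supports a codimension-one facet of $C_\bbR^\circ$. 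For (iii), compute the contragredient action directly:
\[
(s \cdot Z)(v) \;=\; Z(s \cdot v) \;=\; Z(v) - Z(\alpha_s) \, B_\bbR(\alpha_s, v).
\]
Hence $s \cdot Z = Z$ if and only if $Z(\alpha_s) \, B_\bbR(\alpha_s, -) \equiv 0$, which, since $B_\bbR(\alpha_s, \alpha_s) = 2 \neq 0$, is equivalent to $Z(\alpha_s) = 0$, i.e.\ $Z \in H_{\alpha_s}$.

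The plan for (iv) is the classical Tits argument: prove by induction on $\ell(w)$ the following stronger statement. For every $w \in \bbW$ and every $s \in S$,
\[
w \cdot C_\bbR^\circ \subseteq U_s \text{ if } \ell(sw) = \ell(w) + 1, \qquad w \cdot C_\bbR^\circ \subseteq -U_s \text{ if } \ell(sw) = \ell(w) - 1,
\]
where $U_s \coloneqq \{Z : Z(\alpha_s) > 0\}$ and $-U_s \coloneqq \{Z : Z(\alpha_s) < 0\}$. Granted this, if $w \neq \id$ one picks any $s \in S$ with $\ell(sw) < \ell(w)$, yielding $w \cdot C_\bbR^\circ \subseteq -U_s$ while $C_\bbR^\circ \subseteq U_s$, so that $w \cdot C_\bbR^\circ \cap C_\bbR^\circ = \emptyset$.

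The base case $w = \id$ is immediate. The inductive step writes a reduced expression $w = t w'$ with $\ell(w) = \ell(w') + 1$ and reduces, via the exchange condition applied to the pair $s, t$, to controlling the action of the rank-two dihedral subgroup $\langle s, t \rangle$ on the plane $\bbR \alpha_s \oplus \bbR \alpha_t \subseteq \bbR\Lambda_S$ and its dual. I expect this dihedral positivity computation to be the main obstacle: when $m_{s,t} < \infty$, the restriction of the dihedral action to $\bbR \alpha_s \oplus \bbR \alpha_t$ is conjugate to a rotation of angle $\pi/m_{s,t}$ precisely because $B_\bbR(\alpha_s, \alpha_t) = -2\cos(\pi/m_{s,t})$, so the $2m_{s,t}$ chambers cut by the lines $H_{w\alpha_s} \cap (\bbR \alpha_s^\vee \oplus \bbR \alpha_t^\vee)$ are genuinely disjoint and their cyclic arrangement matches the word combinatorics of $\langle s, t\rangle$; when $m_{s,t} = \infty$, the bound $B_\bbR(\alpha_s, \alpha_t) \leq -2$ yields infinitely many disjoint sub-chambers by the same style of explicit calculation. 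Once this dihedral case is in hand, standard reduced-word bookkeeping delivers the inductive step, and the final sentence of the theorem follows because all axioms of a Vinberg pair have been verified.
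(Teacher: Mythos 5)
The paper does not prove this theorem at all --- it is stated as a citation to Bourbaki and Vinberg --- and your proposal is precisely the classical Tits--Vinberg argument from those sources: the direct verifications (i)--(iii) are correct as written, and the induction on $\ell(w)$ via the stronger half-space statement (with the reduction to the rank-two parabolic $\langle s,t\rangle$) is the standard and correct route to (iv). The one place your sketch stops short of a proof is the dihedral positivity lemma itself, which you rightly flag as the crux; note that for $m_{s,t}=\infty$ with $B_\bbR(\alpha_s,\alpha_t)\leq -2$ the restriction of the form to $\bbR\alpha_s\oplus\bbR\alpha_t$ is degenerate or indefinite, so $st$ acts as a parabolic or hyperbolic element rather than a rotation, and the ``infinitely many disjoint sub-chambers'' claim requires the explicit eigenvector computation carried out, e.g., in \cite[Theorem 4.2.7]{bjorner_anders_brenti_2010}, which the paper already invokes in \cref{prop:realfaithfulrealisation}.
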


In the next subsection we will use the following useful lemma, which is probably well-known to the experts.
\begin{lemma}\label{lem:Titsconeconvexhull}
Let $T^\circ$ be the Tits cone of a Vinberg system $(W,S)$ with fundamental chamber $C^\circ$.
Then $T^\circ$ is the convex hull of $\bigcup_{w \in W} w\cdot C^\circ$.
\end{lemma}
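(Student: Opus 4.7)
The plan is to prove the two inclusions separately. For the inclusion $\mathrm{conv}\bigl(\bigcup_{w\in W} w\cdot C^\circ\bigr)\subseteq T^\circ$, I would first note that since $w\cdot T=T$ for every $w\in W$ (by definition of $T$ as a $W$-stable union), applying the homeomorphism $w$ sends the interior of $C$ into the interior of $T$, so each $w\cdot C^\circ\subseteq T^\circ$. Since $T$ is a convex cone by Vinberg's theorem, its interior $T^\circ$ is convex, so it contains the convex hull of the union.

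For the reverse inclusion $T^\circ\subseteq \mathrm{conv}\bigl(\bigcup_{w\in W} w\cdot C^\circ\bigr)$, I would fix $x\in T^\circ$ and exhibit $x$ as the midpoint of a segment with endpoints in two (possibly equal) chambers $w\cdot C^\circ$. Concretely, pick an open ball $B(x,\varepsilon)\subseteq T^\circ$; by local finiteness of $\mathcal{A}$, only finitely many hyperplanes of $\mathcal{A}$ meet the closed ball $\overline{B}(x,\varepsilon/2)$. Choose a unit vector $v$ avoiding the (finite) union of their directions, and set $y^{\pm}:= x\pm \tfrac{\varepsilon}{2}v$. Then $y^\pm \in T^\circ$, neither lies on any hyperplane of $\mathcal{A}$, and $x=\tfrac12(y^++y^-)$.

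The key remaining step is to show that any regular point $y\in T^\circ\setminus \bigcup_{H\in\mathcal{A}} H$ lies in some $w\cdot C^\circ$. By definition of $T$, we have $y\in w\cdot C$ for some $w\in W$. The walls of $w\cdot C$ are the images $w\cdot H_{s_i}$, each fixed by the reflection $ws_iw^{-1}\in\mathcal{R}$, so they all belong to $\mathcal{A}$; since $y$ lies on none of these, $y$ lies in the topological interior $w\cdot C^\circ$ of the polyhedral cone $w\cdot C$. Applying this to $y^+$ and $y^-$ produces the required convex combination.

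The main subtlety is the last identification, namely that not being on any reflection hyperplane guarantees membership in the interior of a chamber rather than on a lower-dimensional face; I expect this to be straightforward from the fact that $w\cdot C$ is a polyhedral cone whose boundary is the union of its walls, but it is the one place where the hyperplane system $\mathcal{A}$ (rather than just the cone $T$) enters the argument. The density/genericity step is routine given the local finiteness built into the definition of a hyperplane system.
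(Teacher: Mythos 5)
Your proof is correct and follows essentially the same route as the paper: the easy inclusion via convexity of $T^\circ$, and the reverse inclusion via local finiteness of $\mathcal{A}$ near $x$ together with the fact that regular points of $T^\circ$ lie in open chambers $w\cdot C^\circ$. The only difference is presentational — the paper observes that the convex hull of $U_x\setminus\bigcup_i H_i$ is all of $U_x$, while you realise $x$ explicitly as the midpoint of a generic segment with regular endpoints, and you spell out the chamber-membership step that the paper leaves implicit.
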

\begin{proof}
It is clear that $\bigcup_{w \in W} w\cdot C^\circ \subseteq T^\circ$, since $T^\circ$ is the interior of $T=\bigcup_{w \in W} w\cdot C$ and $C^\circ$ is an open cone.
Since $T^\circ$ is convex, it also contains the convex hull of $\bigcup_{w \in W} w\cdot C^\circ$.

Now suppose $x \in T^\circ$.
The hyperplane system $\mathcal{A}$ is locally-finite in $T^\circ$, so there exists an open ball $U_x$ (in $T^\circ$) containing $x$ such that only finitely many hyperplanes $\{H_1,...,H_m\} \subseteq \mathcal{A}$ intersect non-trivially with $U_x$.
In particular, $U_x \setminus \left( \cup_{1 \leq i \leq m} H_i \right)$ is contained in $\bigcup_{w \in W} w\cdot C^\circ$.
The convex hull of $U_x \setminus \left( \cup_{1 \leq i \leq m} H_i \right)$ is $U_x$, which therefore contains $x$, as required.
\end{proof}

\subsection{Two Vinberg systems from one fusion realisation}
We will show that any geometric realisation $R\Lambda_S$ of $(\bbW,S)$ over a fusion ring $R$ in fact produces \emph{two} Vinberg systems: one for $(\bbW,S)$ and one for its corresponding unfolded Coxeter system $(\check{\bbW},\mathfrak{B}\times S)$ (see \cref{defn:unfoldedCox}).
\begin{proposition}\label{prop:twoVinbergsystems}
Let $R\Lambda_S$ be a $R$-geometric realisation of $(\bbW,S)$, and let $(\check{\bbW}, \mathfrak{B}\times S)$ be the corresponding unfolded Coxeter system.
Define 
\begin{itemize}
    \item $\check{C} \coloneqq \{Z \in \Hom_\bbZ(R\Lambda_S,\bbR) \mid \forall b \in \mathfrak{B}, \forall s \in S:Z(b\alpha_{s}) \geq 0 \}$; and 
    \item $C \coloneqq \{Z \in \Hom_R(R\Lambda_S, \bbR) \mid  \forall s \in S:Z(\alpha_{s}) \geq 0 \}$.
\end{itemize}
The following statements hold:
\begin{enumerate}
    \item Both $\check{C}$ and $C$ are closed convex polyhedral cones, with interiors 
    \begin{itemize}
    \item $\check{C}^\circ =  \{Z \in \Hom_\bbZ(R\Lambda_S,\bbR) \mid \forall b \in \mathfrak{B}, \forall s \in S:Z(b\alpha_{s}) >0 \}$; and 
    \item $C^\circ = \{Z \in \Hom_R(R\Lambda_S, \bbR) \mid \forall s \in S:Z(\alpha_{s}) > 0 \}$
    \end{itemize}
    respectively.
    \item The walls of $\check{C}$ and $C$ are given respectively by 
    \begin{itemize}
    \item $\check{H}_{b\alpha_s} \coloneqq \{ Z \in \Hom_\bbZ(R\Lambda_S, \bbR) \mid Z(b\alpha_s) = 0\}$ for each $s \in S, b \in \mathfrak{B}$; and 
    \item $H_{\alpha_{s}} \coloneqq \{ Z \in \Hom_R(R\Lambda_S, \bbR) \mid Z(\alpha_s) = 0\}$ for each $s \in S$.
    \end{itemize}
    \item Each $(b,s) \in \mathfrak{B} \times S \subset \check{\bbW}$ fixes exactly $\check{H}_{b\cdot \alpha_s}$ in $\Hom_\bbZ(R\Lambda_S,\bbR)$, and each $s \in S$ fixes exactly $H_{\alpha_s}$ in $\Hom_R(R\Lambda_S,\bbR)$.
    \item For all $w \in \check{\bbW}$, we have $w \cdot \check{C}^\circ \cap \check{C}^\circ \neq \emptyset$ if and only if $w = \id$; similarly for all $w \in \bbW$, we have $w \cdot C^\circ \cap C^\circ \neq \emptyset$ if and only if $w = \id$.
\end{enumerate}
In particular, the Vinberg pairs $(\check{C}^\circ,\mathfrak{B}\times S)$ and $(C^\circ,S)$ make $(\check{\bbW},\mathfrak{B}\times S)$ and $(\bbW,S)$ into Vinberg systems respectively.
\end{proposition}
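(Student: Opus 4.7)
The plan is to reduce both claims to the classical Tits--Vinberg theorem (Theorem~\ref{thm:Vinbergfromreal}) by transporting $C$ and $\check{C}$ along already-established equivariant isomorphisms to fundamental cones of two geometric $\bbR$-realisations.

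\medskip

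\noindent\textbf{Step 1 (the cone $C$).} Proposition~\ref{prop:identifywithdualspace} gives a $\bbW$-equivariant $\bbR$-linear isomorphism $Z \mapsto Z \circ \widetilde{\FPdim}$ from $\Hom_\bbR(\bbR\Lambda_S,\bbR)$ to $\Hom_R(R\Lambda_S,\bbR)$. Since $\widetilde{\FPdim}(\alpha_s) = \alpha_s$, the inequalities $Z(\alpha_s) \geq 0$ cutting out the fundamental cone $C_\bbR$ of the geometric $\bbR$-realisation $\bbR\Lambda_S$ (whose Cartan matrix is $(\FPdim(r_{s,t}))_{s,t \in S}$) correspond exactly to the inequalities cutting out $C$. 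All four items of the proposition for $(\bbW,S)$ then transport directly from Theorem~\ref{thm:Vinbergfromreal} applied to $\bbR\Lambda_S$.

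\medskip

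\noindent\textbf{Step 2 (the cone $\check{C}$).} Proposition~\ref{prop:unfoldedidentification} furnishes a $\bbZ$-linear bijection $\Psi: R\Lambda_S \to \bbZ\Lambda_{\check{S}}$, $b \alpha_s \mapsto \alpha_{b,s}$, intertwining the natural $\check{\bbW}$-action on $\Hom_\bbZ(R\Lambda_S, \bbR)$ (which extends the $\bbW$-action along $\varphi$, as in Proposition~\ref{prop:dualunfoldedidentification}) with the contragredient $\check{\bbW}$-action on $\Hom_\bbZ(\bbZ\Lambda_{\check{S}}, \bbR) = \Hom_\bbR(\bbR\Lambda_{\check{S}}, \bbR)$. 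Under the dual $\Psi^*$, the defining inequalities $Z(b\alpha_s) \geq 0$ of $\check{C}$ pull back to $Z'(\alpha_{b,s}) \geq 0$, which cut out the fundamental cone of the geometric $\bbR$-realisation $\bbR\Lambda_{\check{S}}$ obtained by scalar extension from the faithful integral realisation of Lemma~\ref{lem:Zfaithful}. Applying Theorem~\ref{thm:Vinbergfromreal} to $\bbR\Lambda_{\check{S}}$ then yields all four items for $(\check{\bbW}, \check{S})$.

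\medskip

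\noindent\textbf{Obstacles.} There is essentially no substantive obstacle: the argument is a pure transport of structure from two applications of the classical Tits--Vinberg theorem. The only bookkeeping is (a) matching the defining half-space conditions of $C$ and $\check{C}$ with those of the fundamental cones of $\bbR\Lambda_S$ and $\bbR\Lambda_{\check{S}}$ under the relevant equivariant isomorphisms, which is immediate from $\widetilde{\FPdim}(\alpha_s) = \alpha_s$ and $\Psi(b\alpha_s) = \alpha_{b,s}$; and (b) observing that $\{\alpha_s\}_{s \in S}$ and $\{b\alpha_s\}_{(b,s) \in \check{S}}$ are bases of $R\Lambda_S$ as an $R$-module and as a $\bbZ$-module respectively, so both cones are simplicial and full-dimensional in their ambient dual spaces, making the hyperplanes $H_{\alpha_s}$ and $\check{H}_{b\alpha_s}$ precisely the supporting walls.
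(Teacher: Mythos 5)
Your proposal is correct and follows essentially the same route as the paper: the authors likewise identify $\Hom_R(R\Lambda_S,\bbR)$ with $\Hom_\bbR(\bbR\Lambda_S,\bbR)$ via Proposition~\ref{prop:identifywithdualspace} and $\Hom_\bbZ(R\Lambda_S,\bbR)$ with $\Hom_\bbR(\bbR\Lambda_{\check{S}},\bbR)$ via Proposition~\ref{prop:dualunfoldedidentification}, and then invoke Theorem~\ref{thm:Vinbergfromreal} for the two geometric $\bbR$-realisations. Your bookkeeping of the half-space conditions is exactly the verification the paper leaves to the reader.
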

\begin{proof}
These statements follow directly from the relation of these spaces with the ones constructed from geometric $\bbR$-realisations.

More precisely, let $\bbR\Lambda_{\check{S}}$ be the geometric $\bbR$-realisation of $(\check{\bbW},\check{S})$ using the same Cartan matrix for the $\bbZ$-realisation $\bbZ\Lambda_{\check{S}}$.
On the other hand, let $\bbR\Lambda_S$ be the geometric $\bbR$-realisation of $(\bbW,S)$ given by the Cartan matrix $(\FPdim(r_{s,t}))_{s,t\in S}$.
We have identifications (first one is from \cref{prop:dualunfoldedidentification}, second is obvious):
\[
\Hom_\bbZ(R\Lambda_S,\bbR) \cong \Hom_\bbZ(\bbZ\Lambda_{\check{S}}, \bbR) \cong \Hom_\bbR(\bbR\Lambda_{\check{S}}, \bbR)
\]
as $\check{\bbW}$-representations over $\bbR$; on the other hand the identification
\[
\Hom_R(R\Lambda_S,\bbR) \cong \Hom_\bbR(\bbR\Lambda_S,\bbR)
\]
as $\bbW$-representations over $\bbR$ is shown in \cref{prop:identifywithdualspace}.
We leave it to the reader to check that under the identification above, the polyhedral cones and the walls defined in this proposition agree with those obtained from the contragradient $\bbR$-representation as in \cref{thm:Vinbergfromreal}.
\end{proof}

The objects associated to the two Vinberg systems above interact as follows.
\begin{proposition} \label{prop:interaction}
    \begin{enumerate}
        \item \label{item:chamberintersect} $C = \check{C} \cap \Hom_R(R\Lambda_S,\bbR)$ and $C^\circ = \check{C}^\circ \cap \Hom_R(R\Lambda_S,\bbR)$.
        \item \label{item:wallintersect} For all $s \in S, b \in \mathfrak{B}$, $H_{\alpha_s}=\check{H}_{b\alpha_s} \cap \Hom_R(R\Lambda_S,\bbR)$.
        Moreover, $H_{\alpha_s} = \Hom_R(R\Lambda_S,\bbR) \cap \left(\bigcap_{b \in \mathfrak{B}} \check{H}_{b\alpha_s} \right)$.
        \item \label{item:restrictedaction} For all $s \in S \subset \bbW$, the action of $\varphi(s) = \prod_{b\in \mathfrak{B}} (b,s) \in \check{\bbW}$ restricted to $\Hom_R(R\Lambda_S,\bbR)$ agrees with the action of $s$, and therefore fixes the same hyperplane $H_{\alpha_s}$.
    \end{enumerate}
\end{proposition}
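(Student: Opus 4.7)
The plan is to reduce everything to the observation that the $R$-module structure on $\bbR$ used to define $\Hom_R(R\Lambda_S,\bbR)$ is the one induced by the Frobenius--Perron dimension ring homomorphism $\FPdim: R \to \bbR$. Consequently, for any $Z \in \Hom_R(R\Lambda_S,\bbR)$, any $s \in S$, and any $b \in \mathfrak{B}$, one has
\[
Z(b\alpha_s) = \FPdim(b)\,Z(\alpha_s),
\]
and by \cref{prop:FPdimproperties} the scalar $\FPdim(b)$ satisfies $\FPdim(b) \geq 1 > 0$. Thus, for such $Z$, the sign (positive, non-negative, or zero) of $Z(b\alpha_s)$ is entirely determined by that of $Z(\alpha_s)$, uniformly in $b$.

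For part (i), I would simply unfold the defining inequalities of $\check{C}$ and $C$ using this equality. A morphism $Z \in \Hom_R(R\Lambda_S,\bbR)$ lies in $\check{C}$ iff $\FPdim(b)Z(\alpha_s) \geq 0$ for all $(b,s)$; since $\FPdim(b) > 0$ and $1 \in \mathfrak{B}$, this is equivalent to $Z(\alpha_s) \geq 0$ for all $s \in S$, i.e.\ $Z \in C$. Replacing $\geq$ by $>$ throughout yields the analogous statement for the interiors. For part (ii), the same observation gives $Z(b\alpha_s) = 0 \iff Z(\alpha_s) = 0$ for each $Z \in \Hom_R(R\Lambda_S,\bbR)$, yielding $H_{\alpha_s} = \check{H}_{b\alpha_s} \cap \Hom_R(R\Lambda_S,\bbR)$ for each fixed $b$; the ``moreover'' part follows immediately by intersecting these equalities over all $b \in \mathfrak{B}$.

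For part (iii), the first claim is a direct restatement of \cref{prop:dualunfoldedidentification}, which shows that the inclusion $\Hom_R(R\Lambda_S,\bbR) \subseteq \Hom_\bbZ(R\Lambda_S,\bbR)$ is $\bbW$-equivariant with respect to $\varphi$; in particular, the action of $\varphi(s)$ on any $Z \in \Hom_R(R\Lambda_S,\bbR)$ coincides with that of $s$. Since $s$ fixes $H_{\alpha_s}$ pointwise by \cref{prop:twoVinbergsystems}, so does $\varphi(s)$, and the equality of hyperplanes established in (ii) confirms that this is indeed the ``same'' $H_{\alpha_s}$. I do not anticipate a genuine obstacle here: every step reduces to previously established facts, with the only care required being the bookkeeping between the $R$-linear and $\bbZ$-linear structures and the positivity of $\FPdim$ on the basis $\mathfrak{B}$.
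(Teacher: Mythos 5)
Your proposal is correct and follows exactly the paper's own argument: both reduce parts (i) and (ii) to the identity $Z(b\alpha_s) = \FPdim(b)Z(\alpha_s)$ together with $\FPdim(b)\geq 1 > 0$ from \cref{prop:FPdimproperties}, and both deduce part (iii) from the $\bbW$-equivariance in \cref{prop:dualunfoldedidentification}. The paper's proof is simply a terser version of what you wrote.
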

\begin{proof}
    Note that if $Z \in \Hom_R(R\Lambda_S,\bbR)$, then $Z(b\alpha_s) = \FPdim(b)Z(\alpha_s)$.
    By \cref{prop:FPdimproperties}, for all $b \in \mathfrak{B}$ we get $\FPdim(b) \geq 1 > 0$, so statements in \ref{item:chamberintersect} and \ref{item:wallintersect} hold.
    Statement \ref{item:restrictedaction} is a consequence of \cref{prop:dualunfoldedidentification}.
\end{proof}

\subsection{Main results}
Following \cref{prop:twoVinbergsystems}, for the rest of this section we shall view both $(\check{\bbW}, \check{S})$ and $(\bbW, S)$ as Vinberg systems.
Associated to the Vinberg system $(\check{\bbW}, \mathfrak{B} \times S)$, we denote
\begin{itemize}
	\item the Tits cone by $\check{T}^\circ \subseteq \Hom_\bbZ(R\Lambda,\bbR)$;
	\item the hyperplane system by $\check{\mathcal{A}}$; and 
	\item the hyperplane complement by $\check{\Omega}_{\reg} \coloneqq (\check{T}^\circ \times \check{T}^\circ) \setminus \cup_{\check{H} \in \check{\mathcal{A}}} \left(\check{H} \times \check{H} \right)$.
\end{itemize}
Similarly, associated to the Vinberg system $(\bbW, S)$, we denote
\begin{itemize}
	\item the Tits cone by $T^\circ \subseteq \Hom_R(R\Lambda,\bbR)$;
	\item the hyperplane system by $\mathcal{A}$; and 
	\item the hyperplane complement by $\Omega_{\reg} \coloneqq (T^\circ \times T^\circ) \setminus \cup_{H \in \mathcal{A}} \left(H \times H \right)$.
\end{itemize}

Our first aim is to relate the two hyperplane systems $\mathcal{A}$ and $\mathcal{\check{A}}$.
Note that \cref{prop:interaction} implies that $\check{H} \cap \Hom_R(R\Lambda_S,\bbR)$ is a hyperplane in $\Hom_R(R\Lambda_S, \bbR)$ for all $\check{H}\in \check{\mathcal{A}}$.
Moreover,
\begin{equation} \label{eqn:hyperplaneinclu}
\mathcal{A} \subseteq \{ \check{H} \cap \Hom_R(R\Lambda_S,\bbR) \mid \check{H} \in \check{\mathcal{A}} \}.
\end{equation}
\begin{example}
    Let $(\bbW,S)$ be the Coxeter system associated to $\Gamma = I_2(5)$ and consider its geometric $R^e_M$-realisation as in \cref{eg:pentagonunfolding}, so that $(\check{\bbW},\check{S})$ is of type $A_4$.
    Here the inclusion in \eqref{eqn:hyperplaneinclu} is in fact an equality, with the 10 hyperplanes of $A_4$ intersecting $T^\circ = \Hom_{R^e_M}(R^e_M\Lambda_S,\bbR)$ to give the 5 reflection lines of the pentagon as in \cref{fig:pentagonunfold}.
\end{example}

The following example shows that the inclusion in \eqref{eqn:hyperplaneinclu} can be strict.
\begin{example} \label{eg:hyperplaneoutsideTitscone}
Let $(\bbW,S)$ be the Coxeter system associated to the Coxeter graph of affine 
$\widetilde{A}_1 = 
\begin{tikzcd}[column sep=small]
s \ar[r, no head, "\infty"] & t
\end{tikzcd}
$ 
type and let $R = R(S_3)$ be the representation ring of $S_3$ with basis $\mathfrak{B} = \{1, V, \sgn\}$.
Consider the geometric $R$-realisation $R\Lambda_S$ with Cartan matrix given by
\[
\begin{bmatrix}
	2  & -V \\
	-V &  2 
\end{bmatrix}.
\]
The corresponding unfolded Coxeter system is of affine $\widetilde{D}_5$ type:
\[\begin{tikzcd}[every arrow/.append style = {shorten <= -.5em, shorten >= -.5em}]
    (\sgn,s) & (\sgn,t) \\
	(V,s)    & (V,t) \\
	(1,s)    & (1,t)
	\arrow[no head, from=1-1, to=2-2]
	\arrow[no head, from=2-1, to=1-2]
	\arrow[no head, from=2-1, to=2-2]
	\arrow[no head, from=2-1, to=3-2]
	\arrow[no head, from=3-1, to=2-2]
\end{tikzcd}\]
Then 
\[
\check{H} \coloneqq
	\{
	Z \in \Hom_\bbZ(R\Lambda_S, \bbR) \mid Z(\alpha_{s} + V\alpha_{t} + V\alpha_{s} + \sgn \alpha_{t}) = 0
	\}
\]
is a hyperplane in $\check{\mathcal{A}}$.
If in addition $Z \in \Hom_R(R\Lambda_S,\bbR)$, note that 
\[
Z(\alpha_{s} + V\alpha_{t} + V\alpha_{s} + \sgn \alpha_{t}) = Z(\alpha_s)+2Z(\alpha_t) + 2Z(\alpha_s) + Z(\alpha_t)
\]
(recall $\FPdim(\sgn)=1, \FPdim(V)=2$).
As such,
\[
\check{H} \cap \Hom_R(R\Lambda_S,\bbR)
	= \{ Z \in \Hom_R(R\Lambda_S,\bbR) \mid Z(\alpha_s) + Z(\alpha_t) = 0\},
\]
and the RHS is \emph{not} a hyperplane in $\mathcal{A}$ (in fact, it is the hyperplane associated to the imaginary root $\alpha_s + \alpha_t$).
\end{example}

Although the above example presents a hyperplane $\check{H}$ such that $\check{H} \cap \Hom_R(R\Lambda_S, \bbR)$ is not a hyperplane in $\mathcal{A}$, notice that it lives outside of the Tits cone $T^\circ$.
Our main theorem is to show that this is the only possibility.
\begin{theorem}\label{thm:hyperplanesystem}
	The hyperplane system $\mathcal{A}$ can be identified as follows:
	\[
	\mathcal{A} = \{ \check{H}_r \cap \Hom_R(R\Lambda_S, \bbR) \mid \forall \check{H}_r \in \check{\mathcal{A}}: \check{H}_r \cap T^\circ \neq \emptyset \}.
	\]
	In other words, the hyperplane system in $T^\circ$ induced from $\check{\mathcal{A}}$ through the intersection with $\Hom_R(R\Lambda_S,\bbR)$ agrees with $\mathcal{A}$ (noting that hyperplanes of a hyperplane system in $T^\circ$ must intersect $T^\circ$ non-trivially by definition).
\end{theorem}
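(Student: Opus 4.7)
The plan is to establish the set equality by proving each inclusion. As a preliminary, I first show $T^\circ \subseteq \check{T}^\circ$. By \cref{lem:Titsconeconvexhull}, $T^\circ$ is the convex hull of $\bigcup_{w \in \bbW} w \cdot C^\circ$. Since $C^\circ = \check{C}^\circ \cap \Hom_R(R\Lambda_S,\bbR) \subseteq \check{C}^\circ$ by \cref{prop:interaction}(i), and the action of $w \in \bbW$ on $\Hom_R(R\Lambda_S,\bbR)$ is the restriction of $\varphi(w) \in \check{\bbW}$ by \cref{prop:dualunfoldedidentification}, each $w \cdot C^\circ$ lies in $\varphi(w) \cdot \check{C}^\circ \subseteq \check{T}^\circ$. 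Convexity of $\check{T}^\circ$ then yields the inclusion.

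For $\mathcal{A} \subseteq$ RHS, each $H_r \in \mathcal{A}$ has the form $H_r = w \cdot H_{\alpha_s}$ for some $w \in \bbW$ and $s \in S$. Taking $b = 1 \in \mathfrak{B}$ in \cref{prop:interaction}(ii) gives $H_{\alpha_s} = \check{H}_{\alpha_s} \cap \Hom_R(R\Lambda_S,\bbR)$, and applying $\varphi(w)$ together with the $\bbW$-invariance of $\Hom_R(R\Lambda_S,\bbR)$ produces $\check{H} := \varphi(w) \cdot \check{H}_{\alpha_s} \in \check{\mathcal{A}}$ with $\check{H} \cap \Hom_R(R\Lambda_S,\bbR) = H_r$. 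Since $H_r \subseteq \check{H}$ and $H_r \cap T^\circ \neq \emptyset$, also $\check{H} \cap T^\circ \neq \emptyset$.

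The reverse inclusion is the heart of the argument. Fix $\check{H} = \check{H}_{\check{r}} \in \check{\mathcal{A}}$ with $\check{H} \cap T^\circ \neq \emptyset$ and set $H' := \check{H} \cap \Hom_R(R\Lambda_S,\bbR)$. First, $H'$ is a proper hyperplane of $\Hom_R(R\Lambda_S,\bbR)$: otherwise $\check{r}$ would fix every $Z \in C^\circ \subseteq \check{C}^\circ$, which by \cref{prop:twoVinbergsystems}(iv) would force $\check{r} = \id$, contradicting that $\check{r}$ is a reflection. Next, I claim that every $x \in H' \cap T^\circ$ has non-trivial $\bbW$-stabilizer. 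The crucial identity is: for $x \in \Hom_R(R\Lambda_S,\bbR)$,
\[
x \in w \cdot C^\circ \iff x \in \varphi(w) \cdot \check{C}^\circ,
\]
which follows from $Z(b \cdot v) = \FPdim(b) Z(v)$ together with $\FPdim(b) > 0$ for all $b \in \mathfrak{B}$. If the $\bbW$-stabilizer of $x$ were trivial, then by Vinberg's theorem $x$ would lie in some chamber $w \cdot C^\circ$, hence in the interior $\varphi(w) \cdot \check{C}^\circ$ of a $\check{\bbW}$-chamber, which cannot meet the wall $\check{H}$. Therefore $x$ has non-trivial $\bbW$-stabilizer, and Vinberg's theorem applied to $(\bbW,S)$ produces a reflection $r \in \bbW$ with $x \in H_r \in \mathcal{A}$.

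To conclude $H' \in \mathcal{A}$, pick any $x_0 \in H' \cap T^\circ$ together with a neighbourhood $U$ of $x_0$ in $T^\circ$ meeting only finitely many hyperplanes $H_{r_1}, \ldots, H_{r_k}$ of $\mathcal{A}$ (local finiteness). By the previous step, $H' \cap U \subseteq \bigcup_i H_{r_i}$. Since $H' \cap U$ is a non-empty relatively open subset of the codimension-one subspace $H'$, and a real vector space cannot be covered by finitely many proper subspaces, we must have $H' = H_{r_i}$ for some $i$. In particular $H' \in \mathcal{A}$, completing the proof. The main obstacle is the stabilizer dichotomy in the previous paragraph; it is exactly here that the positivity $\FPdim(b) > 0$ across all basis elements $b \in \mathfrak{B}$, and hence the fusion-ring structure, plays the essential role.
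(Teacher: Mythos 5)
Your proof is correct and follows essentially the same strategy as the paper's: show that every point of $\check{H} \cap T^\circ$ lies on some hyperplane of $\mathcal{A}$ via a stabiliser argument using the chamber/wall compatibility of \cref{prop:interaction} (the paper argues this directly by moving the point into $\check{C}$, you argue the contrapositive via open chambers), and then conclude $\check{H}\cap \Hom_R(R\Lambda_S,\bbR) \in \mathcal{A}$ by local finiteness, since a non-empty open subset of a hyperplane cannot be covered by finitely many proper subspaces. Your additional checks (that $\check{H}\cap\Hom_R(R\Lambda_S,\bbR)$ is a proper hyperplane, and the inclusion $\mathcal{A}\subseteq$ RHS) correspond to facts the paper establishes in the surrounding text via \cref{prop:interaction} and \eqref{eqn:hyperplaneinclu}.
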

We start with the following lemma, which is slightly weaker than the theorem.
\begin{lemma}
    Suppose $\check{H}\cap T \neq \emptyset$. Then for all $x \in \check{H}\cap T$, $x \in H_x$ for some $H_x \in \mathcal{A}$.
\end{lemma}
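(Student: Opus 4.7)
The plan is to transport $x$ into the closed fundamental chamber $C$ via the $\bbW$-action and then exploit the positivity structure of $\FPdim$. Since $x \in T = \bigcup_{w \in \bbW} w \cdot C$, write $x = w \cdot y$ with $y \in C$ and $w \in \bbW$. Using the embedding $\varphi: \bbW \hookrightarrow \check{\bbW}$ from \cref{cor:Coxgroupinjection} together with the $\check{\bbW}$-invariance of $\check{\mathcal{A}}$, the hyperplane $w^{-1} \cdot \check{H}$ still lies in $\check{\mathcal{A}}$ and contains $y$. If I can produce a wall $H_{\alpha_s} \in \mathcal{A}$ passing through $y$, then $w \cdot H_{\alpha_s} \in \mathcal{A}$ (as $\mathcal{A}$ is $\bbW$-invariant) and contains $x$. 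So it is enough to treat the case $x = y \in C$.

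Next, I would identify $\check{H}'  := w^{-1}\cdot \check{H}$ as the orthogonal hyperplane of a ``positive root''. Every reflection in $\check{\bbW}$ fixes a hyperplane of the form $\{Y \in \Hom_\bbZ(R\Lambda_S,\bbR) \mid Y(v) = 0\}$, where $v \in \bbZ \Lambda_{\check{S}} \cong R \Lambda_S$ lies in the $\check{\bbW}$-orbit of a simple root $\alpha_{(b,s)} = b\alpha_s$. By the standard fact that in any Coxeter system every root is either a non-negative or non-positive $\bbZ$-linear combination of simple roots --- which applies here thanks to the faithfulness of $\bbZ\Lambda_{\check{S}}$ established in \cref{lem:Zfaithful} --- I may replace $v$ by $-v$ if necessary and write
\[
v = \sum_{b \in \mathfrak{B},\, s \in S} c_{b,s}\, b\alpha_s, \qquad c_{b,s} \in \bbZ_{\geq 0}, \quad v \neq 0.
\]

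Now the positivity argument closes things quickly. Since $y \in C \subseteq \Hom_R(R\Lambda_S,\bbR)$, the $R$-linearity of $y$ (with $\bbR$ viewed as an $R$-module via $\FPdim$) gives $y(b\alpha_s) = \FPdim(b)\,y(\alpha_s)$, and by \cref{prop:FPdimproperties} we have $\FPdim(b) \geq 1$ for all $b \in \mathfrak{B}$. Together with $y(\alpha_s) \geq 0$ (from $y \in C$), every term in
\[
0 \;=\; y(v) \;=\; \sum_{b,\,s} c_{b,s}\, \FPdim(b)\, y(\alpha_s)
\]
is non-negative. Hence $y(\alpha_s) = 0$ for every $s$ such that some $c_{b,s} > 0$, and at least one such $s$ exists because $v \neq 0$. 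This means $y \in H_{\alpha_s}$, and $H_{\alpha_s} \in \mathcal{A}$ since it is fixed by $s \in \bbW$ (\cref{prop:twoVinbergsystems}). Reversing the reduction of the first paragraph delivers the desired $H_x \in \mathcal{A}$ containing $x$.

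The only non-elementary input is the positive/negative dichotomy for roots of $(\check{\bbW},\check{S})$, which is classical. All other steps --- the reduction to $C$, the expression of $\check{H}$ via a root, and the final bookkeeping --- are forced directly by the definitions and the strict positivity $\FPdim(b) \geq 1$; no serious obstacle is anticipated.
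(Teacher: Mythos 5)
Your proof is correct, and it follows the same overall skeleton as the paper's: translate $x$ into the closed fundamental chamber $C$ by some $w \in \bbW$, show that the translated point lies on a wall $H_{\alpha_s}$, and translate back. The difference is in the mechanism for producing the wall. The paper observes that $\varphi(w)\cdot x \in \check{C}$ has non-trivial $\check{\bbW}$-stabiliser (it lies on a hyperplane of $\check{\mathcal{A}}$, hence is fixed by the corresponding reflection) and invokes the standard fact that points of the closed fundamental chamber with non-trivial stabiliser lie on a wall $\check{H}_{b_i\alpha_s}$; it then uses $\check{H}_{b_i\alpha_s} \cap T = H_{\alpha_s}\cap T$ from \cref{prop:interaction}. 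You instead write the translated hyperplane as the vanishing locus of a root of $(\check{\bbW},\check{S})$, invoke the positive/negative dichotomy for roots of the unfolded $\bbZ$-realisation, and conclude via the strict positivity $\FPdim(b)\geq 1$ that every summand of $y(v)=0$ vanishes. Both arguments are sound; the paper's has the advantage of using only facts already quoted (Vinberg's theorem and \cref{prop:interaction}), whereas yours imports the root dichotomy for non-standard geometric representations — classical, but valid in this generality only because the unfolded Cartan matrix satisfies the hypotheses of \cref{prop:realfaithfulrealisation}, a point worth flagging explicitly. In exchange, your route makes the role of $\FPdim$-positivity transparent and identifies concretely which wall contains $y$ (any $s$ with $c_{b,s}>0$), which is mildly more informative than the paper's existence statement.
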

\begin{proof}
    Let $x \in \check{H}\cap T \neq \emptyset$.
    Then there exists $w \in \bbW$ such that $w\cdot x \in C \cap T$, and so $\varphi(w)\cdot x \in  \check{C}$.
    Recall that the only points in $\check{C}$ with non-trivial $\check{\bbW}$-stabilisers are points on the walls.
    It follows that $\varphi(w)\cdot x \in \check{H}_{b_i\alpha_s}$ for some $b_i \in \mathfrak{B}, s \in S$.
    Since $\check{H}_{b_i\alpha_s} \cap T = H_{\alpha_s} \cap T$, we have that $x \in H_x \coloneqq w^{-1}\cdot H_{\alpha_s}$ as required.
\end{proof}
\begin{proof}[Proof of \cref{thm:hyperplanesystem}]
Suppose $\check{H} \cap T^\circ \neq \emptyset$.
By the lemma above, there exists $H \in \mathcal{A}$ such that $H \cap \left( \check{H} \cap T^\circ \right) \neq \emptyset$.
We claim that $H$ can moreover be chosen so that $H = \check{H}\cap \Hom_R(R\Lambda_S,\bbR)$.

Suppose to the contrary that $H \neq \check{H} \cap \Hom_R(R\Lambda_S,\bbR)$ for all $H \in \mathcal{A}$.
Since $T^\circ \subseteq \Hom_R(R\Lambda_S,\bbR)$ is an open convex cone and $\check{H} \cap \Hom_R(R\Lambda_S,\bbR)$ is a hyperplane, $\check{H} \cap T^\circ$ is a convex cone of codimension 1 (in $\Hom_R(R\Lambda_S,\bbR)$).
But for all $x \in \check{H}\cap T^\circ$ we have some $H_x \in \mathcal{A}$ containing $x$, so this would imply that any open subset of $\check{H} \cap T^\circ$ has to intersect with infinitely many hyperplanes $H \in \mathcal{A}$.
In particular, there exists $x \in \check{H}\cap T^\circ \subset T^\circ$ with the property that any open set $U_x \ni x$ will intersect with infinitely many hyperplanes in $\mathcal{A}$, contradicting the locally-finite property of $\mathcal{A}$ in $T^\circ$.
\end{proof}

The following corollary is an immediate consequence of the theorem above.
\begin{corollary}
Suppose $(\bbW,S)$ is a finite Coxeter system. 
Then the inclusion in \eqref{eqn:hyperplaneinclu} is an equality.
\end{corollary}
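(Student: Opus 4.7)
The plan is to deduce the corollary as an immediate consequence of \cref{thm:hyperplanesystem}, which characterises $\mathcal{A}$ as the set of intersections $\check{H}\cap\Hom_R(R\Lambda_S,\bbR)$ where $\check{H}\in\check{\mathcal{A}}$ satisfies $\check{H}\cap T^\circ \neq \emptyset$. Since \eqref{eqn:hyperplaneinclu} omits this non-emptiness condition, the corollary reduces to verifying that $\check{H}\cap T^\circ\neq\emptyset$ automatically for every $\check{H}\in\check{\mathcal{A}}$ when $\bbW$ is finite.

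First I would establish that for a finite Coxeter group the Tits cone fills the entire contragredient space, i.e.\ $T^\circ = \Hom_R(R\Lambda_S,\bbR)$. This goes through the $\bbW$-equivariant identification $\Hom_R(R\Lambda_S,\bbR)\cong\Hom_\bbR(\bbR\Lambda_S,\bbR)$ of \cref{prop:identifywithdualspace}, under which the Vinberg system of $(\bbW,S)$ described in \cref{prop:twoVinbergsystems} corresponds to the classical Vinberg system of \cref{thm:Vinbergfromreal} associated to the geometric $\bbR$-realisation with Cartan matrix $(\FPdim(r_{s,t}))_{s,t\in S}$. Since $\bbW$ is finite, every point has finite stabiliser, so Vinberg's criterion forces $T = T^\circ$; combined with the classical fact that a finite reflection group's chamber translates cover the whole ambient space (equivalently, using \cref{lem:Titsconeconvexhull} and the finiteness of $\bbW$), this gives $T^\circ$ equal to the full dual space.

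With this in hand, for any $\check{H}\in\check{\mathcal{A}}$ we have $\check{H}\cap T^\circ = \check{H}\cap\Hom_R(R\Lambda_S,\bbR)$, which contains the origin and is therefore non-empty. Feeding this into \cref{thm:hyperplanesystem} then upgrades the inclusion \eqref{eqn:hyperplaneinclu} to the desired equality. The only mildly delicate point is verifying that the isomorphism of \cref{prop:identifywithdualspace} identifies the fundamental chambers, and hence the Tits cones, of the two Vinberg systems; this is essentially a matter of tracking definitions, since the isomorphism is built to intertwine the two geometric realisations and therefore preserves the positivity conditions cutting out $C$ and $C_\bbR$. I do not expect any substantive obstacle beyond this bookkeeping.
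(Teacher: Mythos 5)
Your proposal is correct and follows exactly the route the paper intends: the paper states the corollary as an immediate consequence of Theorem \ref{thm:hyperplanesystem}, the point being (as noted in the introduction) that for finite $\bbW$ the Tits cone $T^\circ$ is all of $\Hom_R(R\Lambda_S,\bbR)$, so every $\check{H}\in\check{\mathcal{A}}$ meets $T^\circ$ (e.g.\ at the origin) and the non-emptiness condition in the theorem is vacuous. Your verification that $T^\circ$ fills the dual space via \cref{prop:identifywithdualspace}, Vinberg's stabiliser criterion, and the classical covering property of finite reflection groups is exactly the bookkeeping the paper leaves implicit.
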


Next, we prove our main theorem on embeddings of hyperplane complements:
\begin{theorem}\label{thm:hyperplanecompembedding}
    The natural embedding $\Hom_R(R\Lambda_S,\bbR) \subseteq \Hom_\bbZ(R\Lambda_S,\bbR)$ induces an embedding of hyperplane complements $\Omega_{\reg} \subseteq \check{\Omega}_{\reg}$, which moreover is $\bbW$-equivariant with respect to $\varphi: \bbW \to \check{\bbW}$.
    The homomorphism between fundamental groups $\tilde{\varphi}:\pi_1(\Omega_{\reg}/\bbW) \to \pi_1(\check{\Omega}_{\reg}/\check{\bbW})$ induced by the embedding is defined by sending each standard generator $\sigma_s \mapsto \prod_{b \in \mathfrak{B}} \sigma_{(b,s)}$.
\end{theorem}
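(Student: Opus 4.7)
The plan is to proceed in three stages: first construct the topological embedding $\Omega_{\reg} \hookrightarrow \check{\Omega}_{\reg}$, then descend to orbit spaces using the $\bbW$-equivariance already established in \cref{prop:dualunfoldedidentification}, and finally identify the image of the standard Artin generator $\sigma_s$ via a local meridian calculation. I expect the third stage to be the main technical obstacle.

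For the first stage, I would establish $T^\circ \subseteq \check{T}^\circ$: for each $w \in \bbW$, the chamber $w\cdot C^\circ$ coincides with $\varphi(w)\cdot C^\circ$ by \cref{prop:interaction}\ref{item:restrictedaction} and is contained in $\varphi(w) \cdot \check{C}^\circ \subseteq \check{T}^\circ$ by \cref{prop:interaction}\ref{item:chamberintersect}; since $\check{T}^\circ$ is convex and, by \cref{lem:Titsconeconvexhull}, $T^\circ$ is the convex hull of $\bigcup_{w\in\bbW} w\cdot C^\circ$, the inclusion follows. For the complement side, suppose $(x,y) \in \Omega_{\reg}$ lies in $\check{H}\times\check{H}$ for some $\check{H}\in\check{\mathcal{A}}$. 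Then $x \in \check{H}\cap T^\circ \neq \emptyset$, so by \cref{thm:hyperplanesystem} the hyperplane $H \coloneqq \check{H}\cap\Hom_R(R\Lambda_S,\bbR)$ lies in $\mathcal{A}$ and contains both $x$ and $y$, contradicting $(x,y)\in\Omega_{\reg}$. For the second stage, the embedding is $\bbW$-equivariant with respect to $\varphi$, so descends to a continuous map on orbit spaces; fixing a basepoint in $C^\circ$ (which lies in both $\Omega_{\reg}$ and $\check{\Omega}_{\reg}$) induces $\tilde{\varphi}$ on $\pi_1$, and the commutative diagram with $\varphi$ follows from the naturality of the short exact sequence $1\to\pi_1(\Omega_{\reg})\to\pi_1(\Omega_{\reg}/\bbW)\to\bbW\to 1$ together with its analogue for $\check{\bbW}$.

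For the third stage, I would use the standard Van der Lek representative of $\sigma_s$: the path $\gamma(t) = ((1-t)x_0 + t\cdot s\cdot x_0,\, y_0)$, which remains in $\Omega_{\reg}$ because $y_0 \in C^\circ$ avoids every wall in $\mathcal{A}$, and whose image in $\check{\Omega}_{\reg}$ terminates at $\varphi(s) \cdot (x_0, y_0)$ by \cref{prop:interaction}\ref{item:restrictedaction}. To compute the resulting loop class, I would localize near the midpoint, where the first coordinate traverses $H_{\alpha_s}$ at a point chosen to lie in no other wall of $\mathcal{A}$; in local coordinates on $\check{\Theta}_{\bbC}$ making each $\check{H}_{b\alpha_s}$ the $b$-th coordinate hyperplane, the embedded path traces $w\mapsto(\FPdim(b)\,w)_{b\in\mathfrak{B}}$ as $w$ encircles the origin, giving winding number $+1$ around each $\check{H}_{b\alpha_s}$. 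Since the generators $(b,s)$ for $b\in\mathfrak{B}$ pairwise commute in $\check{\bbW}$ by \cref{defn:unfoldedCox}, the local parabolic Artin group is $\bbZ^{|\mathfrak{B}|}$, in which the winding vector represents $\prod_{b\in\mathfrak{B}}\sigma_{(b,s)}$; the parabolic inclusion then delivers the claimed formula globally. The delicate point is to rule out additional walls $\check{H}_r$ arising from non-simple reflections $r \in \check{\bbW}$ which might pass through the chosen point of $H_{\alpha_s}$ and contribute extra meridian factors; this exclusion is precisely the content of the strong admissibility of the unfolding partition $\check{S}=\bigsqcup_{s\in S}\{(b,s):b\in\mathfrak{B}\}$, established for categorifiable fusion rings in \cref{thm:unfoldisLusztig}.
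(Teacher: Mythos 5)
Your first two stages coincide with the paper's argument: the inclusion $T^\circ \subseteq \check{T}^\circ$ via \cref{lem:Titsconeconvexhull} and \cref{prop:interaction}, the exclusion of pairs $(x,y)\in\check{H}\times\check{H}$ via \cref{thm:hyperplanesystem}, and the descent to orbit spaces are all exactly what the paper does. The divergence, and the problem, is in your third stage.

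Two concrete issues there. First, the path $\gamma(t)=((1-t)x_0+t\,s\cdot x_0,\,y_0)$ is not a loop in $\Omega_{\reg}/\bbW$: its endpoint $(s\cdot x_0,y_0)$ is not in the $\bbW$-orbit of $(x_0,y_0)$ (applying $s$ gives $(x_0,s\cdot y_0)$). In the model $\Omega_{\reg}=(T^\circ\times T^\circ)\setminus\bigcup(H\times H)$ the Van der Lek generator $\sigma_s$ needs a two-segment representative, moving the first coordinate from $x_0$ to $s\cdot x_0$ and then the second coordinate likewise; this is the loop $\gamma_s=\pi\circ(\alpha_s(1,1)\beta_s(s,1))$ the paper uses. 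Second, and more seriously, your exclusion of extra walls through the localization point is misattributed. You invoke \cref{thm:unfoldisLusztig}, which is only available for categorifiable $R$; the theorem you are proving is stated for arbitrary fusion rings, and the paper explicitly leaves strong admissibility in the general case as a conjecture, so as written your argument does not establish the stated result. Moreover, strong admissibility is not the relevant mechanism anyway: what you actually need is that the only hyperplanes of $\check{\mathcal{A}}$ containing a point $p$ of the open face $\{Z\in\check{C}\mid Z(b\alpha_s)=0\ \forall b,\ Z(b\alpha_t)>0\ \forall b,\forall t\neq s\}$ are the $\check{H}_{b\alpha_s}$, $b\in\mathfrak{B}$. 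This follows unconditionally from Vinberg's theorem (the stabiliser of a point of $\check{C}$ is the standard parabolic generated by the simple reflections whose walls contain it, here $\langle(b,s)\rangle_{b\in\mathfrak{B}}\cong(\bbZ/2)^{|\mathfrak{B}|}$) together with \cref{prop:interaction}. With that substitution, and with a citation of Van der Lek's parabolic-inclusion result to transport the local meridian computation in $\bbZ^{|\mathfrak{B}|}$ into $\pi_1(\check{\Omega}_{\reg}/\check{\bbW})$, your local route can be repaired. The paper avoids all of this local analysis: it writes $\prod_b\sigma_{(b,s)}$ as an explicit concatenation $\theta$ of segments joining adjacent chambers of $\check{\bbW}$, observes that each such segment crosses only the single common wall $\check{H}_{b_i\alpha_s}$, performs explicit square homotopies to commute the $\alpha$- and $\beta$-segments past each other, and then uses convexity of $\check{T}^\circ\times\{x_0\}$ to identify the result with $\gamma_s$ -- an argument valid for every fusion ring with no appeal to admissibility.
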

\begin{proof}
We first show that $T^\circ$ indeed sits inside $\check{T}^\circ$.
Indeed, by \cref{prop:interaction}, $C^\circ \subseteq \check{C}^\circ$ and
\[
\bigcup_{w \in \bbW} w \cdot C^\circ \subseteq \bigcup_{w \in \bbW} \varphi(w) \cdot \check{C}^\circ \subseteq \bigcup_{\check{w} \in \check{\bbW}} \check{w} \cdot \check{C}^\circ.
\]
Taking the convex hulls of the sets above, we get $T^\circ \subseteq \check{T}^\circ$ by \cref{lem:Titsconeconvexhull} as desired.
It now follows from \cref{thm:hyperplanesystem} that $\Omega_{\reg} \subseteq \check{\Omega}_{\reg}$.
Moreover, the fact that the embedding $\Omega_{\reg} \subseteq \check{\Omega}_{\reg}$ is $\bbW$-equivariant with respect to $\varphi$ is a consequence of \ref{prop:interaction}.

Now, we must show that the induced group homomorphism $\tilde{\varphi}: \pi_1 (\Omega_{\reg} / \bbW) \to \pi_1 (\check{\Omega}_{\reg} / \check{\bbW})$ sends each standard generator $\sigma_s$ to $\prod_{b \in \mathfrak{B}} \sigma_{(b,s)}$.
We denote by $\pi : \Omega_{\reg} \to \Omega_{\reg} / \bbW$ and by $\check{\pi} : \check{\Omega}_{\reg} \to \check{\Omega}_{\reg} / \check{\bbW}$ the canonical quotients, we choose a point $x_0 \in C^\circ \subseteq \check C^\circ$, and we take $z_0 \coloneqq \pi (x_0,x_0)$ as base point for $\Omega_{\reg} / \bbW$ and $\check{z}_0 \coloneqq \check \pi (x_0,x_0)$ as base point for $\check{\Omega}_{\reg} / \check \bbW$.

\smallskip\noindent
For $w_1, w_2 \in \bbW$ and $s \in S$ we define the paths $\alpha_s (w_1,w_2)$ and $\beta_s (w_1,w_2)$ in $\Omega_{\reg}$ by:
\begin{gather*}
\alpha_s (w_1, w_2) (t) = ((1-t) (w_1 \cdot x_0) + t (w_1s \cdot x_0), w_2 \cdot x_0)\,, \\
\beta_s (w_1, w_2) (t) = (w_1 \cdot x_0, (1-t) (w_2 \cdot x_0) + t (w_2 s \cdot x_0))\,.
\end{gather*}
Note that the loop $\gamma_s = \pi \circ (\alpha_s (w,w) \beta_s (ws,w))$ in $\Omega_{\reg} / \bbW$ does not depend on $w$ and represents the generator $\sigma_s$.
On the other hand, for $\check w_1, \check w_2 \in \check \bbW$ and $(b,s) \in \mathfrak{B} \times S$, we define the paths $\check \alpha_{(b,s)} (\check w_1,\check w_2)$ and $\check \beta_{(b,s)} (\check w_1,\check w_2)$ in $\check \Omega_{\reg}$ by:
\begin{gather*}
\check \alpha_{(b,s)} (\check w_1, \check w_2) (t) = ((1-t) (\check w_1 \cdot x_0) + t (\check w_1 (b,s) \cdot x_0), \check w_2 \cdot x_0)\,,\\
\check \beta_{(b,s)} (\check w_1, \check w_2) (t) = (\check w_1 \cdot x_0, (1-t) (\check w_2 \cdot x_0) + t (\check w_2 (b,s) \cdot x_0))\,.
\end{gather*}
Note that the loop $\check \gamma_{(b,s)} = \check \pi \circ (\check \alpha_{(b,s)} (\check w,\check w) \check \beta_{(b,s)} (\check w (b,s),\check w))$ in $\check \Omega_{\reg} / \check \bbW$ does not depend on $\check w$ and represents $\sigma_{(b,s)}$.

\smallskip\noindent
We order $\mathfrak{B} = \{b_1, \dots, b_q\}$ and we set:
\begin{gather*}
\theta =
\check \alpha_{(b_1,s)} (1,1) \check \beta_{(b_1,s)} ((b_1,s), 1)
\check \alpha_{(b_2,s)} ((b_1,s), (b_1,s)) \check \beta_{(b_2,s)} ((b_1,s)(b_2,s), (b_1,s)) \cdots\\
\check \alpha_{(b_q,s)} ((b_1,s) \cdots (b_{q-1},s), (b_1,s) \cdots (b_{q-1},s)) \check \beta_{(b_q,s)} ((b_1,s) \cdots (b_{q-1},s)(b_q,s),(b_1,s) \cdots (b_{q-1},s))\,.
\end{gather*}
Then, by the above, $\prod_{b \in \mathfrak{B}} \sigma_{(b,s)}$ is represented by the loop $\check \pi \circ \theta$ in $\check \Omega_{\reg} / \check \bbW$.

Now, we fix $s \in S$.
For $b_i \in \mathfrak{B}$ and $\check w \in \langle (b_1,s), \dots, (b_q,s) \rangle$, the only hyperplane of $\check{\mathcal{A}}$ that the segment $[0,1] \to \Hom_\bbZ (R \Lambda_S, \bbR)$, $t \mapsto (1-t) (\check w \cdot x_0) + t (\check w (b_i,s) \cdot x_0)$, intersects is $\check H_{b_i \cdot \alpha_s}$.
It follows that, for $i \neq j$ and $\check w_1, \check w_2 \in \langle (b_1,s), \dots, (b_q,s) \rangle$, we have an embedding $[0,1] \times [0,1] \to \check \Omega_{\reg}$ defined by
\[
(t_1,t_2) \mapsto ((1-t_1) (\check w_1 \cdot x_0) + t_1 (\check w_1 (b_i,s) \cdot x_0), (1-t_2) (\check w_2 \cdot x_0) + t_2 (\check w_2 (b_j,s) \cdot x_0)\,.
\]
The boundary of this square is
\[
\check \alpha_{(b_i,s)} (\check w_1, \check w_2)
\check \beta_{(b_j,s)} (\check w_1 (b_i,s), \check w_2)
\check \alpha_{(b_i,s)} (\check w_1, \check w_2 (b_j,s))^{-1}
\check \beta_{(b_j,s)} (\check w_1, \check w_2)^{-1}\,,
\]
hence $\check \alpha_{(b_i,s)} (\check w_1, \check w_2) \check \beta_{(b_j,s)} (\check w_1 (b_i,s), \check w_2)$ is homotopic to $\check \beta_{(b_j,s)} (\check w_1, \check w_2) \check \alpha_{(b_i,s)} (\check w_1, \check w_2 (b_j,s))$ in $\check \Omega_{\reg}$ with respect to the endpoints.
Set
\begin{gather*}
\theta'_a =
\check \alpha_{(b_1,s)} (1,1)
\check \alpha_{(b_2,s)} ((b_1,s),1) \cdots
\check \alpha_{(b_q,s)} ((b_1,s) \cdots (b_{q-1},s),1)\,,\\
\theta'_b =
\check \beta_{(b_1,s)}( (b_1,s) \cdots (b_q,s),1 )
\check \beta_{(b_2,s)}( (b_1,s) \cdots (b_q,s), (b_1,s) )
\cdots\\
\quad\quad\check \beta_{(b_q,s)}( (b_1,s) \cdots (b_q,s),(b_1,s) \cdots (b_{q-1},s))\,.
\end{gather*}
Then, by iterating the above homotopy, we get that $\theta$ is homotopic to $\theta'_a \theta'_b$ in $\check \Omega_{\reg}$ with respect to the endpoints.

\smallskip\noindent
$\theta'_a$ and $\alpha_s (1,1)$ are two paths in $\check T^o \times \{x_0\} \subseteq \check \Omega_{\reg}$ whose endpoints are $(x_0, x_0)$ and $((b_1,s) \cdots (b_q,s) \cdot x_0, x_0) = (s \cdot x_0, x_0)$.
Since $\check T^o \times \{x_0\}$ is convex, these two paths are homotopic relative to the endpoints.
Similarly, $\theta'_b$ and $\beta_s (s,1)$ are homotopic relative to the endpoints. 
So, $\check \pi \circ \theta$ is homotopic to $\gamma_s$ in $\check \Omega_{\reg} / \check \bbW$, hence the induced group homomorphism $\pi_1 (\Omega_{\reg} / \bbW) \to \pi_1 (\check \Omega_{\reg} / \check \bbW)$ sends $\sigma_s$ to $\prod_{b \in \mathfrak{B}} \sigma_{(b,s)}$.
\end{proof}

\begin{remark}
    The fact that $\tilde{\varphi}:\pi_1(\Omega_{\reg}/\bbW) \to \pi_1(\check{\Omega}_{\reg}/\check{\bbW})$ sends each standard generator $\sigma_s \mapsto \prod_{b \in \mathfrak{B}} \sigma_{(b,s)}$ can also be proven using the notion of (direct) pregalleries in \cite{VdL_thesis}; the details are left to the interested reader.
\end{remark}

Finally, we also deduce finiteness relation between the two Coxeter systems $(\check{\bbW},\check{S})$ and $(\bbW,S)$.
Given a subset $J \subseteq S$ (resp.\ $\subseteq \check{S}$), let $\bbW_J$ (resp.\ $\check{\bbW}_J$) denote the standard parabolic subgroup generated by $J$.
\begin{proposition}\label{prop:finiteiffunfoldedfinite}
Let $J \subseteq S$, so that $\mathfrak{B}\times J \subseteq \check{S}$.
The Coxeter group $\bbW_J$ is finite if and only if $\check{\bbW}_{\mathfrak{B}\times J}$ is also finite. 
In particular, $\bbW$ is finite if and only if $\check{\bbW}$ is finite.
\end{proposition}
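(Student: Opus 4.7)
The plan is to apply the classical Tits--Vinberg criterion for finiteness of a Coxeter system: $(\bbW_J, J)$ (resp.\ $(\check{\bbW}_{\mathfrak{B}\times J}, \mathfrak{B}\times J)$) is finite if and only if its Cartan matrix $C^{\mathrm{FP}}\coloneqq (\FPdim(r_{s,t}))_{s,t\in J}$ (resp.\ $C^{\bbZ}$, the restriction of $(\check{r}_{(b_i,s),(b_j,t)})$ to indices in $\mathfrak{B}\times J$) is positive definite as a real symmetric matrix. So it suffices to show that $C^{\mathrm{FP}}$ is positive definite if and only if $C^{\bbZ}$ is.

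The implication ``$\check{\bbW}_{\mathfrak{B}\times J}$ finite $\Rightarrow \bbW_J$ finite'' is immediate from \cref{cor:Coxgroupinjection}: since $\varphi(s) = \prod_{b\in\mathfrak{B}}(b,s)\in \check{\bbW}_{\mathfrak{B}\times J}$ for every $s\in J$, the unfolding homomorphism restricts to an injection $\varphi|_{\bbW_J}:\bbW_J\hookrightarrow\check{\bbW}_{\mathfrak{B}\times J}$, so $\bbW_J$ is a subgroup of a finite group.

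For the converse, write $C^{\mathrm{FP}}=2I-M^{\mathrm{FP}}$ and $C^{\bbZ}=2I-M^{\bbZ}$; because $-r_{s,t}\in R_{\geq 0}$ for $s\neq t$, both $M^{\mathrm{FP}}$ and $M^{\bbZ}$ are symmetric non-negative matrices with vanishing diagonal blocks. By Perron--Frobenius, positive definiteness of $2I-M$ for such $M$ is equivalent to $\rho(M)<2$, so the hard direction reduces to the equality of spectral radii $\rho(M^{\bbZ}) = \rho(M^{\mathrm{FP}})$. To prove this I would use the Perron vector $\vec{d}\coloneqq (\FPdim(b_i))_{b_i\in\mathfrak{B}}>0$. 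A direct computation using \cref{lem:Frobeniusrecip} shows that $D\coloneqq\sum_i\FPdim(b_i)b_i$ satisfies $D\cdot r = \FPdim(r)\,D$ for every $r\in R$, from which one extracts $N_r\vec{d} = \FPdim(r)\vec{d}$, where $N_r$ denotes right-multiplication by $r$ on $R$ in the basis $\mathfrak{B}$. Combining this with the transpose identity $N_r^T=N_{r^*}$ (again a consequence of Frobenius reciprocity) together with $r_{t,s}=r_{s,t}^*$ also gives $\vec{d}^T N_{r_{t,s}}=\FPdim(r_{s,t})\,\vec{d}^T$.

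These two identities yield that the lift $v\mapsto u$, $u_{(b_i,s)}\coloneqq \FPdim(b_i)v_s$, sends an $M^{\mathrm{FP}}$-eigenvector of eigenvalue $\lambda$ to an $M^{\bbZ}$-eigenvector of the same eigenvalue, and dually that the projection $u\mapsto v$, $v_s\coloneqq \sum_i\FPdim(b_i)u_{(b_i,s)}$, sends an $M^{\bbZ}$-eigenvector of eigenvalue $\mu$ to an $M^{\mathrm{FP}}$-eigenvector of the same eigenvalue (with $v\neq 0$ whenever $u\geq 0$ is non-zero, since $\vec{d}>0$). Applying the lift to the Perron eigenvector of $M^{\mathrm{FP}}$ gives $\rho(M^{\mathrm{FP}})\leq \rho(M^{\bbZ})$, and applying the projection to the Perron eigenvector of $M^{\bbZ}$ yields the reverse inequality, proving equality. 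The main obstacle is really establishing these two intertwining identities; both rest on $\vec{d}$ being a common Perron eigenvector for all right-multiplication matrices $N_r$, which is a nontrivial but standard consequence of Frobenius reciprocity combined with the uniqueness of $\FPdim$ from \cref{prop:FPdimproperties}. With these in hand, the spectral comparison is a direct application of the Perron--Frobenius theorem to the symmetric non-negative matrices $M^{\mathrm{FP}}$ and $M^{\bbZ}$.
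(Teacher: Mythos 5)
Your proof is correct, but it takes a genuinely different route from the paper's. The paper argues geometrically: it uses the inclusion of Tits cones $T^\circ\subseteq\check{T}^\circ$ (from the proof of \cref{thm:hyperplanecompembedding}) together with Vinberg's characterisation of $T^\circ$ as the set of points with finite stabiliser, and then exhibits a point $x_J$ in the face of $C$ cut out by $J$ whose stabilisers in $\bbW$ and $\check{\bbW}$ are exactly $\bbW_J$ and $\check{\bbW}_{\mathfrak{B}\times J}$, the key computation being $x_J(b\alpha_s)=\FPdim(b)\,x_J(\alpha_s)$ with $\FPdim(b)\geq 1$. You instead reduce both finiteness statements to positive definiteness of the respective Cartan matrices and prove the spectral identity $\rho(M^{\bbZ})=\rho(M^{\mathrm{FP}})$; your two intertwining identities do hold: the first is $\sum_j\check{r}_{(b_i,s),(b_j,t)}\FPdim(b_j)=\FPdim(b_i)\FPdim(r_{s,t})$, which follows from \cref{lem:Frobeniusrecip} exactly as you describe via the regular element $D=\sum_i\FPdim(b_i)b_i$, and the second is obtained by simply applying the ring homomorphism $\FPdim$ to $b_j\cdot r_{s,t}=\sum_i\check{r}_{(b_i,s),(b_j,t)}b_i$. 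Your argument is more self-contained (it bypasses the Tits-cone machinery entirely) and yields the extra quantitative fact that unfolding preserves the spectral radius of the off-diagonal part of the Cartan matrix; the paper's argument, by contrast, reuses infrastructure it has already built and localises the statement to stabilisers of points, which is what makes the parabolic version for arbitrary $J$ essentially free. Two small points you should make explicit: the criterion ``finite iff positive definite'' for the paper's generalised realisations (where an $\infty$-labelled edge may carry an entry strictly less than $-2$) needs the one-line remark that such an entry already forces a non-positive $2\times 2$ principal minor, so both sides of the equivalence fail simultaneously; and since $M^{\bbZ}$ need not be irreducible, you should cite the version of Perron--Frobenius that provides a non-negative (not necessarily positive) eigenvector for $\rho(M^{\bbZ})$, which is exactly what guarantees your projected vector $v$ is nonzero.
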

\begin{proof}
We claim that for any $x \in T$, the stabiliser $\bbW_x$ of $x$ in $\bbW$ is finite if and only if the stabiliser $\check{\bbW}_x$ of $x$ in $\check{\bbW}$ is finite.
In the proof of \cref{thm:hyperplanecompembedding} above, we have shown that $T^\circ \subseteq \check{T}^\circ$.
So if $x \in T$ has finite stabiliser $\bbW_x$ in $\bbW$, then it is a point in $T^\circ$, hence a point in $\check{T}^\circ$, which implies that the stabiliser $\check{\bbW}_x$ in $\check{\bbW}$ is also finite.
On the other hand, if $\check{\bbW}_x$ is finite, then $\bbW_x$ is also finite since the injective group homomorphism $\varphi: \bbW \to \check{\bbW}$ sends $\bbW_x$ to $\check{\bbW}_x$ by \cref{prop:interaction}.

It is now sufficient to find an $x \in C \subseteq T$ with stabilisers $\bbW_x = \bbW_J$ and $\check{\bbW}_x = \check{\bbW}_{\mathfrak{B}\times J}$.
Given $J \subseteq S$, choose a point $x_J \in C$ with stabiliser in $\bbW$ given by $\bbW_J$; explicitly,
\[
x_J \in \{ Z \in C \mid \forall s \in J, Z(\alpha_s) = 0 \text{ and } \forall s \in S \setminus J, Z(\alpha_s) > 0 \}.
\]
Viewing $x_J \in \check{C}$, it follows that the stabiliser of $x_J$ in $\check{\bbW}$ is given by $\check{\bbW}_{\mathfrak{B}\times J}$, since any $x_J$ as above satisfies
\[
x_J \in \{ \check{Z} \in \check{C} \mid \forall b \in \mathfrak{B}, \forall s \in J: \check{Z}(b\alpha_s) = 0; \text{ and } \forall b \in \mathfrak{B}, \forall s \in S \setminus J: \check{Z}(b\alpha_s) > 0 \},
\]
using the fact that $x_J(b\alpha_s) = \FPdim(b)x_J(\alpha_s)$, and that $\FPdim(b) \geq 1 > 0$.
%
\end{proof}
\begin{remark}
    As mentioned in \ref{rem:unfoldinginjective}, the unfolding homomorphism $\varphi$ is expected to be induced by a strong admissible partition of $\check{\Gamma}$, where \cref{prop:finiteiffunfoldedfinite} is also implied; see \cref{sec:partitionfoldingLCM} for more details.
\end{remark}

\section{Strong admissible partitions, foldings and LCM homomorphisms}\label{sec:partitionfoldingLCM}
\subsection{Strong admissible partitions and foldings}
We recall here the definitions of strong admissible partitions (also called Lusztig partitions) and foldings.
\begin{definition}
Let $\check{\Gamma}$ be a Coxeter graph associated to a Coxeter system $(\check{\bbW},\check{S})$.
A \emph{strong admissible partition} of $\check{\Gamma}$ is a surjection $\pi: \check{S} \to S$ onto a finite set $S$, satisfying:
\begin{enumerate}
\item $\pi(\check{s}) = \pi(\check{s}') \implies $ $\check{s}$ and $\check{s}'$ are not connected by an edge in $\check{\Gamma}$; and  
\item given any two distinct elements $s \neq t \in S$, the full Coxeter subgraph of $\check{\Gamma}$ generated by $\pi^{-1}(s) \cup \pi^{-1}(t)$ is a disjoint union of irreducible Coxeter graphs with all having the same Coxeter number (the Coxeter number is $\infty$ if the irreducible Coxeter group is infinite); we denote this Coxeter number by $m_{s,t}$.
\end{enumerate}
The Coxeter matrix $(m_{s,t})_{s,t \in S}$ defines a Coxeter system $(\bbW,S)$, the Coxeter graph of which we denote by $\Gamma$.
Motivated by \cite{Crisp99,Castella_admissible}, the simplicial map of graphs $\check{\Gamma} \to \Gamma$ induced by $\pi$ is called a \emph{strong admissible folding}.
\end{definition}

\begin{remark}
The above is a special case of an \emph{admissible folding}; see \cite{Muhlherr_CoxinCox,Castella_admissible} for the definition.
\end{remark}

The relevance of strong admissible partitions and foldings to Coxeter groups and Artin--Tits groups are as follows.
\begin{definition}
Let $\pi$ be a strong admissible partition of $\check{\Gamma}$ and let $\check{\Gamma} \to \Gamma$ be its associated folding.
The map sending $s \in S$ to $\prod_{\check{s} \in \pi^{-1}(s)} \check{s} \in \check{\bbW}$ induces a well-defined group homomorphism $\varphi_\pi: \bbW \to \check{\bbW}$ \cite{Muhlherr_CoxinCox}, which we call a \emph{strong admissible homomorphism} between the Coxeter groups.

Similarly, let $\bbB$ and $\check{\bbB}$ denote the Artin--Tits groups associated to $(\bbW,S)$ and $(\check{\bbW},\check{S})$ respectively.
The map sending $\sigma_s \in \bbB$ to $\prod_{\check{s} \in \pi^{-1}(s)} \sigma_{\check{s}} \in \check{\bbB}$ induces a well-defined group homomorphism $\widetilde{\varphi}_\pi: \bbB \to \check{\bbB}$ between the Artin--Tits groups, which we call the \emph{strong admissible homomorphism} associated to the strong admissible folding $\check{\Gamma} \to \Gamma$.
\end{definition}
Note that strong admissible homomorphisms are special cases of LCM homomorphisms studied in \cite{Crisp99,Godelle_FCfolding}.

The following results on strong admissible homomorphisms can be found in \cite{Muhlherr_CoxinCox, Dyer_rootII}; the result on Artin--Tits monoids can be found in \cite{Castella_admissible}.
See also \cite{EH_Coxembedding}.
\begin{proposition} \label{prop:foldingresults}
\begin{enumerate}
\item A strong admissible homomorphism $\varphi_\pi: \bbW \to \check{\bbW}$ is injective.
\item An strong admissible homomorphism $\widetilde{\varphi}_\pi: \bbB \to \check{\bbB}$ associated to a strong admissible folding is injective on the respective Artin--Tits monoids.
\item Suppose $(\bbW,S)$ is irreducible. Then $\bbW$ and each irreducible component of $\check{\bbW}$ all have the same Coxeter number (including $\infty$).
In particular, $\bbW$ is finite if and only if $\check{\bbW}$ is finite.
\end{enumerate}
\end{proposition}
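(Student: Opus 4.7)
The plan is to address the three parts of \cref{prop:foldingresults} separately, each via a different faithfulness or positivity argument. For part (i), I would first verify that $\varphi_\pi$ is a well-defined group homomorphism: the braid relation $(\varphi_\pi(s)\varphi_\pi(t))^{m_{s,t}}=1$ splits across the irreducible components of the subgraph on $\pi^{-1}(s)\cup\pi^{-1}(t)$, and on each such component the product of the two commuting subwords $\prod_{\check s\in\pi^{-1}(s)}\check s$ and $\prod_{\check t\in\pi^{-1}(t)}\check t$ (restricted to that component) is a Coxeter element of a finite dihedral-type subgroup whose order equals the common Coxeter number $m_{s,t}$ guaranteed by strong admissibility. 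For injectivity, following M\"uhlherr, I would construct a $\varphi_\pi(\bbW)$-invariant subspace inside the geometric $\bbR$-representation $\bbR\Lambda_{\check S}$ of $\check\bbW$ on which $\varphi_\pi(\bbW)$ acts as the geometric $\bbR$-representation of $\bbW$: setting $\beta_s:=\sum_{\check s\in\pi^{-1}(s)}c_{\check s}\alpha_{\check s}$ with positive weights $c_{\check s}$ chosen via Perron--Frobenius eigenvectors of the adjacency-type matrices on the rank-2 subgraphs $\pi^{-1}(s)\cup\pi^{-1}(t)$, one arranges that the bilinear form restricted to $\mathrm{span}_{\bbR}\{\beta_s\mid s\in S\}$ equals the Cartan matrix $(-2\cos(\pi/m_{s,t}))_{s,t\in S}$. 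Faithfulness (\cref{prop:realfaithfulrealisation}) then forces $\varphi_\pi$ to be injective.

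For part (ii), I would exploit positivity of $\widetilde\varphi_\pi$. Each image $\widetilde\varphi_\pi(\sigma_s)=\prod_{\check s\in\pi^{-1}(s)}\sigma_{\check s}$ is a product of pairwise commuting standard positive generators, so $\widetilde\varphi_\pi$ restricts to a monoid homomorphism $\bbB^+\to\check\bbB^+$. Following Castella, I would work with the greedy Garside/Deligne normal form in $\check\bbB^+$: push elements of $\bbB^+$ through $\widetilde\varphi_\pi$ and show that iterated greatest common divisors with the standard generators $\sigma_{\check s}$ retain enough combinatorial information to reconstruct the normal form of the preimage, giving monoid injectivity. Alternatively one could produce a faithful positive linear representation of $\check\bbB^+$ whose pullback via $\widetilde\varphi_\pi$ contains the classical faithful positive representation of $\bbB^+$.

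For part (iii), the key observation is that the image under $\varphi_\pi$ of any Coxeter element $c=\prod_{s\in S}s$ of $\bbW$ is a Coxeter element $\prod_{\check s\in\check S}\check s$ of $\check\bbW$ (in some order), because the generators inside each fibre $\pi^{-1}(s)$ commute pairwise. Irreducibility of $\bbW$ makes the order of $c$ equal to the Coxeter number $h$ of $\bbW$ (allowing $\infty$), so by injectivity the image has the same order; meanwhile, that order equals $\mathrm{lcm}(h_1,\ldots,h_k)$ where $h_i$ is the Coxeter number of the $i$-th irreducible component $\check\bbW_i$. The main obstacle is to upgrade the identity $h=\mathrm{lcm}(h_1,\ldots,h_k)$ to the pointwise matching $h=h_i$ for every $i$: I would attack this by showing that, because $\Gamma$ is irreducible and $\varphi_\pi(\bbW)$ is not contained in any proper product factor of $\check\bbW$, the projection of $\varphi_\pi(c)$ onto each $\check\bbW_i$ remains a Coxeter element of $\check\bbW_i$ of order $h_i$, and comparing this with the folded geometric picture of part (i), which embeds the geometric representation of $\bbW$ coherently across the components, forces $h_i=h$. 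Once this matching is established, the finiteness equivalence $\bbW$ finite $\iff$ $\check\bbW$ finite follows immediately, since $\bbW$ (resp.\ $\check\bbW$) is finite iff $h<\infty$ (resp.\ all $h_i<\infty$).
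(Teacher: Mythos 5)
The paper offers no proof of \cref{prop:foldingresults} at all: it is stated as a collection of known results, with parts (i) and (iii) attributed to M\"uhlherr and Dyer and part (ii) to Castella (see the sentence immediately preceding the proposition). So there is no internal argument to compare against; what can be judged is whether your sketch would stand alone. Parts (i) and (ii) follow the standard lines of the cited works: the invariant-subspace construction with Perron--Frobenius weights is indeed how M\"uhlherr-type embeddings are established, and reducing monoid injectivity to normal forms or LCM-compatibility is the Crisp--Godelle--Castella strategy. Two caveats, though. In (i) you must check that the weights $c_{\check s}$, which you choose separately on each rank-two subgraph $\pi^{-1}(s)\cup\pi^{-1}(t)$, can be made globally consistent --- a single positive weight per vertex of $\check S$ working for all $t$ simultaneously; this is not automatic and is where the admissibility hypothesis earns its keep (in the present paper's setting the canonical choice $c_{(b,s)}=\FPdim(b)$ does this, but your sketch is for general strong admissible partitions). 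In (ii), ``retain enough combinatorial information to reconstruct the normal form'' is a wish rather than an argument, so this part is a pointer to Castella's method, not a proof.

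Part (iii) contains a genuine gap. Your argument correctly yields $h=\mathrm{lcm}(h_1,\dots,h_k)$, where $h$ is the order of a Coxeter element of $\bbW$ and the $h_i$ are the Coxeter numbers of the irreducible components of $\check\bbW$ (granting the nontrivial facts that Coxeter elements of infinite irreducible Coxeter groups have infinite order and that $\varphi_\pi(c)$ projects to a Coxeter element of each component, which does hold since the product runs over all of $\check S$). But the passage from the lcm identity to the pointwise identity $h=h_i$ for every $i$ is precisely the substance of the claim, and your proposed resolution --- ``comparing this with the folded geometric picture \dots forces $h_i=h$'' --- is an assertion, not an argument. The spectrum of $\varphi_\pi(c)$ on the invariant subspace $\mathrm{span}_\bbR\{\beta_s\}$ of part (i) does not obviously control the spectrum of the Coxeter element $c_i$ on the complementary part of each component $\bbR\Lambda_{\check S_i}$, so no eigenvalue bookkeeping of the kind you indicate closes the gap. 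This is the point where one genuinely needs the root-system analysis of Dyer or M\"uhlherr, and your sketch does not supply it.
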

We note that $\widetilde{\varphi}_\pi: \bbB \to \check{\bbB}$ is also conjecturally injective (on the whole Artin--Tits group), where it is known for spherical type \cite{Crisp99} and FC type \cite{Godelle_FCfolding} Artin--Tits groups.

\subsection{Folding vs.\ unfolding}
Recall that in \cref{defn:unfoldedCox}, we defined an unfolded Coxeter system $(\check{\bbW},\check{S})$ from a Coxeter system $(\bbW,S)$ equipped with a geometric realisation over a fusion ring $R$.
The following result in \cite{EH_fusionquivers} justifies our naming convention:
\begin{theorem}[\protect{\cite[Theorem 5.2.7]{EH_fusionquivers}}]\label{thm:unfoldisLusztig}
Fix $(\bbW,S)$ to be a Coxeter system.
Let $R\Lambda_S$ be a geometric $R$-realisation of $(\bbW,S)$ for some fusion ring $R$ and let $(\check{\bbW}, \check{S})$ be the corresponding unfolded Coxeter system.
Suppose $R$ is categorifiable.
Then the partition $\pi: \check{S} \to S$ sending $(b,s) \mapsto s$ is a strong admissible partition.
\end{theorem}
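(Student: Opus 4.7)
The plan is to verify the two axioms that define a strong admissible partition for $\pi:\check S\to S$, $(b,s)\mapsto s$. Axiom (i) --- that $\pi((b,s))=\pi((b',s))$ forces $(b,s)$ and $(b',s)$ to be non-adjacent in $\check\Gamma$ --- is immediate from \cref{defn:unfoldedCox}: an edge between $(b_i,s)$ and $(b_j,t)$ is permitted only when $s\ne t$, so two vertices in a common fibre $\mathfrak B\times\{s\}$ are never joined by an edge.

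For axiom (ii), I would fix distinct $s,t\in S$ and analyse the full subgraph $\check\Gamma_{s,t}$ of $\check\Gamma$ on $\mathfrak B\times\{s,t\}$. Since both the edge set and the restricted integer Cartan matrix from \cref{lem:Zfaithful} depend only on $r_{s,t}$ and $r_{t,s}=r_{s,t}^{*}$, the graph $\check\Gamma_{s,t}$ coincides with the unfolded Coxeter graph associated to the rank-two parabolic $R$-realisation of the dihedral system $I_2(m_{s,t})$; this reduces the problem to the dihedral case. The restricted unfolded Cartan matrix is bipartite, with off-diagonal blocks $(-\check r_{(b_i,s),(b_j,t)})_{i,j}$ encoding multiplication by $-r_{s,t}$ on the basis $\mathfrak B$. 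By the ring-homomorphism property of $\FPdim$ in \cref{prop:FPdimproperties} and the FPdim condition of \cref{defn:geometricfusionrep}, the Perron--Frobenius eigenvalue of this non-negative integer matrix is $\FPdim(-r_{s,t})=2\cos(\pi/m_{s,t})$ when $m_{s,t}<\infty$, and is $\ge 2$ when $m_{s,t}=\infty$. Invoking the classical spectral characterisation of Coxeter graphs --- a finite simply-laced irreducible Coxeter graph with Coxeter number $h$ has adjacency PF-eigenvalue $2\cos(\pi/h)$, whereas an affine or indefinite irreducible graph has PF-eigenvalue $\ge 2$ --- it suffices to prove that every connected component $\check\Gamma'$ of $\check\Gamma_{s,t}$ realises this same PF-eigenvalue, which then forces the Coxeter number of each component to equal $m_{s,t}$.

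The main obstacle is precisely this uniformity of Perron--Frobenius eigenvalues across components. A priori, a non-negative integer matrix that splits into several blocks can have strictly smaller spectral radii on some blocks, so extra rigidity is required. This is where the categorifiability hypothesis on $R$ enters: realising $R$ as the Grothendieck ring of a fusion category $\mathcal C$ and $-r_{s,t}$ as the class of an object $X\in\mathcal C$, the connected components of $\check\Gamma_{s,t}$ correspond to indecomposable pieces of $\mathcal C$ under the action of the tensor subcategory involving $X$ and $X^{*}$. Rigidity and semisimplicity of $\mathcal C$, combined with the uniqueness of the Frobenius--Perron ring homomorphism recorded in \cref{prop:FPdimproperties}, force each such indecomposable piece to carry a positive Perron--Frobenius vector realising the eigenvalue $\FPdim(-r_{s,t})$. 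Every block of the bipartite matrix therefore attains the full spectral radius $\FPdim(-r_{s,t})$, and the spectral classification recalled above completes axiom (ii).
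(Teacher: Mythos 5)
First, a caveat about the comparison itself: the paper does not prove this statement --- it is imported verbatim from \cite[Theorem 5.2.7]{EH_fusionquivers} --- so there is no in-paper proof to measure yours against. Judged on its own terms, your outline is largely on target: axiom (i) is indeed immediate from \cref{defn:unfoldedCox}; the reduction to the rank-two subgraph $\check\Gamma_{s,t}$ on $\mathfrak{B}\times\{s,t\}$ is legitimate; the identification of the Perron--Frobenius eigenvalue of the matrix $M=(-\check{r}_{(b_i,s),(b_j,t)})_{i,j}$ with $\FPdim(-r_{s,t})$ is correct; and you have correctly isolated the real difficulty, namely that \emph{every} connected component of $\check\Gamma_{s,t}$, not merely one of them, must attain this spectral radius.

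The gap is in your resolution of that difficulty. The appeal to ``rigidity and semisimplicity of $\mathcal C$'' forcing each indecomposable piece to carry a positive Perron--Frobenius vector is an assertion rather than an argument; as written it restates the needed uniformity in categorical language without establishing it. Moreover, the categorical detour is avoidable: the vector $w=(\FPdim(b_i))_{b_i\in\mathfrak{B}}$ is strictly positive (each entry is $\geq 1$ by \cref{prop:FPdimproperties}), and Frobenius reciprocity (\cref{lem:Frobeniusrecip}) together with multiplicativity of $\FPdim$ gives $Mw=\FPdim(-r_{s,t})\,w$. Since $M$ is block-diagonal with respect to the decomposition into connected components, the restriction of $w$ to each block is a strictly positive eigenvector of that block, and a non-negative matrix admitting a strictly positive eigenvector has that eigenvalue as its spectral radius (Collatz--Wielandt); hence every component attains $\FPdim(-r_{s,t})$. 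Notably this argument nowhere uses categorifiability, which is consistent with (indeed would essentially settle) the paper's conjecture that the hypothesis is superfluous for this statement. Two smaller points you should also supply before invoking the simply-laced spectral classification when $m_{s,t}<\infty$: (a) all edges of $\check\Gamma_{s,t}$ must be labelled $3$, i.e.\ all entries of $M$ are $0$ or $1$; this follows since any structure constant $c$ of $M$ satisfies $c\leq\FPdim(-r_{s,t})<2$, again by Frobenius reciprocity; and (b) the spectral radius of the bipartite adjacency matrix of $\check\Gamma_{s,t}$ is the largest singular value of the off-diagonal block, which coincides with the spectral radius of $M$ only because $M$ is symmetric (\cref{lem:Zfaithful}).
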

In particular, as noted in \cref{eg:unfoldedsystems}, the $R_M$-realisation in \cref{eg:Verlinderealisation} corresponds to strong admissible foldings from type $A$'s, whereas $R^\infty_M$-realisation in \cref{eg:modifiedrealisation} also involves folding from $\widetilde{D}_5$ on the $\infty$-labelled edge.
Note that the latter is essentially a variant of the one considered in \cite{Paris_monoidinject}, which instead uses $\widetilde{A}_3$ in place of $\widetilde{D}_5$. 

Combining the theorem above with \cref{thm:hyperplanecompembedding}, we obtain the following result.
\begin{corollary}
Suppose $R$ is categorifiable.
Then the homomorphism of fundamental groups in \cref{thm:hyperplanecompembedding} is a strong admissible homomorphism associated to a strong admissible folding.
In other words, the map $\Omega_{\reg}/\bbW \to \check{\Omega}_{\reg}/\check{\bbW}$, induced from the embedding $\Omega_{\reg} \subseteq \check{\Omega}_{\reg}$ of the corresponding regular orbit spaces, is a topological realisation of the corresponding strong admissible homomorphism.
\end{corollary}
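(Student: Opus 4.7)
The plan is to unpack both sides of the claimed equality at the level of presentations and observe that the relevant homomorphisms agree on generators. Everything needed has already been set up in the paper, so the proof amounts to an essentially formal assembly.

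First, I would invoke \cref{thm:unfoldisLusztig}: since $R$ is categorifiable, the surjection $\pi \colon \check S = \mathfrak{B}\times S \to S$, $(b,s)\mapsto s$, is a strong admissible partition of $\check\Gamma$ whose associated Coxeter graph is precisely $\Gamma$. By the general formalism of strong admissible partitions, this produces a strong admissible folding $\check\Gamma\to\Gamma$ and an associated strong admissible homomorphism of Artin--Tits groups
\[
\widetilde{\varphi}_\pi \colon \bbB(\Gamma)\to \bbB(\check\Gamma), \qquad \sigma_s \mapsto \prod_{\check s\in \pi^{-1}(s)} \sigma_{\check s} = \prod_{b\in\mathfrak{B}} \sigma_{(b,s)},
\]
and underlying strong admissible homomorphism $\varphi_\pi \colon \bbW\to\check\bbW$, $s\mapsto \prod_{b\in\mathfrak B}(b,s)$, which by \cref{defn:unfoldinghomomorphism} coincides with the unfolding homomorphism $\varphi$.

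Next, I would translate the conclusion of \cref{thm:hyperplanecompembedding} into the same language. By Van der Lek's theorem applied to the two Vinberg systems $(\bbW,S)$ and $(\check\bbW,\check S)$, there are identifications $\pi_1(\Omega_{\reg}/\bbW)\cong \bbB(\Gamma)$ and $\pi_1(\check\Omega_{\reg}/\check\bbW)\cong \bbB(\check\Gamma)$ under which the homotopy class of the standard meridian loop crossing the wall $H_{\alpha_s}$ (resp.\ $\check H_{b\alpha_s}$) corresponds to the Artin generator $\sigma_s$ (resp.\ $\sigma_{(b,s)}$). Under these identifications, \cref{thm:hyperplanecompembedding} asserts exactly that the homomorphism $\tilde\varphi$ induced by the inclusion $\Omega_{\reg}\subseteq \check\Omega_{\reg}$ sends $\sigma_s\mapsto \prod_{b\in\mathfrak B}\sigma_{(b,s)}$.

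Finally, since Artin--Tits groups are generated by the $\sigma_s$, $s\in S$, the two homomorphisms $\tilde\varphi$ and $\widetilde{\varphi}_\pi$ agree on a generating set and hence are equal. The compatibility with $\varphi=\varphi_\pi$ on the quotients $\bbW\to\check\bbW$ is already recorded by the commutative diagram in \cref{thm:hyperplanecompembedding}. This proves that $\tilde\varphi$ is a strong admissible homomorphism associated to the strong admissible folding $\check\Gamma\to\Gamma$, and that the continuous map $\Omega_{\reg}/\bbW\to\check\Omega_{\reg}/\check\bbW$ is a topological realisation of it. There is no real obstacle: the only thing that could have been subtle is matching the combinatorial data of $\pi$ with the unfolding construction of \cref{defn:unfoldedCox}, and this is precisely the content of \cref{thm:unfoldisLusztig}, which is why the hypothesis that $R$ be categorifiable is needed.
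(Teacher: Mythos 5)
Your proposal is correct and follows essentially the same route as the paper, which simply combines \cref{thm:unfoldisLusztig} (giving the strong admissible partition $\pi\colon (b,s)\mapsto s$ when $R$ is categorifiable) with \cref{thm:hyperplanecompembedding} (which identifies $\tilde{\varphi}(\sigma_s)$ with $\prod_{b\in\mathfrak{B}}\sigma_{(b,s)}$). Your unpacking of the agreement on generators via Van der Lek's identification is just a more explicit rendering of the same one-line argument.
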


We conjecture that the assumption of $R$ being categorifiable is superfluous:
\begin{conjecture}
\cref{thm:unfoldisLusztig} holds for all fusion rings $R$.
In particular, the homomorphism of fundamental groups in \cref{thm:hyperplanecompembedding} is always a strong admissible homomorphism associated to a strong admissible folding.
\end{conjecture}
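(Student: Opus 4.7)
The plan is to prove the conjectural strengthening of \cref{thm:unfoldisLusztig}: that the partition $\pi:\check{S} \to S$, $(b,s) \mapsto s$, is strong admissible for every fusion ring $R$. The second assertion of the conjecture then follows by combining this with \cref{thm:hyperplanecompembedding}. Condition (i) of a strong admissible partition is immediate from \cref{defn:unfoldedCox}, which requires $s \neq t$ for two vertices $(b,s),(b',t)$ of $\check{\Gamma}$ to be connected; thus the entire content lies in condition (ii).

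Fix $s \neq t \in S$, write $m = m_{s,t}$, and let $N = N_{s,t}$ be the non-negative integer matrix whose entries are $-\check{r}_{(b_i,s),(b_j,t)}$; equivalently, $N$ is the matrix of right-multiplication by $-r_{s,t}$ on $R$ in the basis $\mathfrak{B}$, with $N_{t,s} = N^{T}$ by \cref{lem:Frobeniusrecip}. The full Coxeter subgraph $\check{\Gamma}_{s,t}$ of $\check{\Gamma}$ on $\mathfrak{B}\times\{s,t\}$ is bipartite with adjacency pattern $N$, edges labelled $3$ or $\infty$ according as the corresponding entry of $N$ equals $1$ or is at least $2$. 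By \cref{prop:finiteiffunfoldedfinite}, the parabolic $\check{\bbW}_{\mathfrak{B}\times\{s,t\}}$ is finite exactly when $m < \infty$; in that case no $\infty$-labelled edge can occur in $\check{\Gamma}_{s,t}$ (it would embed an infinite rank-$2$ parabolic into a finite Coxeter group), so every entry of $N$ lies in $\{0,1\}$ and each connected component $C$ of $\check{\Gamma}_{s,t}$ is a simply-laced finite Coxeter diagram of type $A$, $D$, or $E$, whose Coxeter number $h_C$ satisfies $\sigma_{\max}(N_C) = 2\cos(\pi/h_C)$ for the submatrix $N_C$ supported on $C$.

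Two soft bounds on $\{h_C\}$ come out essentially for free. The Perron--Frobenius eigenvalue of $N$, which dominates that of each of its blocks, equals $\FPdim(-r_{s,t}) = 2\cos(\pi/m)$, so $\max_C h_C = m$ and in particular $h_C \leq m$ for every $C$. Independently, injectivity of $\varphi$ (\cref{cor:Coxgroupinjection}) together with the exact order $m$ of $st$ in $\bbW$ forces $\varphi(st) = \left(\prod_b (b,s)\right)\left(\prod_b (b,t)\right)$ to have order exactly $m$ in $\check{\bbW}$; restricted to each component $C$, this is a bipartite Coxeter element of order $h_C$, whence $\mathrm{lcm}_C\, h_C = m$ and each $h_C$ divides $m$. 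The case $m = \infty$ is handled analogously via $\FPdim(-r_{s,t}) \geq 2$ and the spectral characterisation of affine and indefinite simply-laced diagrams.

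The principal obstacle is to upgrade ``$h_C \mid m$ and $h_C \leq m$'' to the equality $h_C = m$ for \emph{every} component --- equivalently, to show that the non-negative matrix $N$ has the same Perron--Frobenius eigenvalue on each of its irreducible bipartite blocks. This uniform-spectrum property is a genuine structural feature of fusion rings and is not implied by Perron--Frobenius alone. For categorifiable $R$ it is extracted in \cite[\S 5]{EH_fusionquivers} by identifying the ``two-variable quantum numbers'' $[n]_{r_{s,t}} \in R$ with Frobenius--Perron dimensions of genuine objects in the categorification, from which positivity in $R_{\geq 0}$ is automatic. The natural plan is therefore to prove this positivity lemma (\cite[Lemma 5.7]{EH_fusionquivers}) purely ring-theoretically --- for instance by induction on $n$ using the defining recursion for $[n]_{r_{s,t}}$ together with \cref{lem:Frobeniusrecip} and the closure of $R_{\geq 0}$ under addition, multiplication, and involution. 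Granted this positivity, the argument of \cite{EH_fusionquivers} goes through verbatim, yielding $h_C = m$ on every component; combined with \cref{thm:hyperplanecompembedding}, the induced map $\Omega_{\reg}/\bbW \to \check{\Omega}_{\reg}/\check{\bbW}$ is then a topological realisation of the associated strong admissible homomorphism $\tilde{\varphi}_\pi$.
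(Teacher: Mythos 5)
First, note that the paper offers no proof of this statement: it is posed as an open conjecture, so there is no argument of the authors to compare yours against. Judged on its own terms, your proposal is an outline with an explicitly unfilled gap at the decisive step. You correctly reduce everything to showing that every connected component $C$ of the full subgraph on $\pi^{-1}(s)\cup\pi^{-1}(t)$ has Coxeter number exactly $m_{s,t}$, and your two ``soft bounds'' ($\max_C h_C=m$ from the Frobenius--Perron eigenvalue, $\operatorname{lcm}_C h_C=m$ from the order of $\varphi(st)$ via \cref{cor:Coxgroupinjection}) are sound but, as you say yourself, do not exclude a component whose Coxeter number is a proper divisor of $m$ (nor, when $m=\infty$, a finite component). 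At that point you defer to proving \cite[Lemma 5.7]{EH_fusionquivers} ``purely ring-theoretically, for instance by induction'' --- but no induction is carried out, and this positivity of non-commutative two-variable quantum numbers is exactly the point the authors flag as the obstruction to removing categorifiability. As written, the proposal therefore does not establish the conjecture.

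That said, the gap you identify admits a much more direct attack that you almost have in hand. The vector $d=(\FPdim(b))_{b\in\mathfrak{B}}$ is \emph{strictly} positive and satisfies both $Nd=\FPdim(-r_{s,t})\,d$ and $N^{T}d=\FPdim(-r_{s,t})\,d$ (use \cref{lem:Frobeniusrecip} together with $\FPdim(r)=\FPdim(r^{*})$). Hence $(d,d)$ is a strictly positive eigenvector of the bipartite adjacency matrix $\left(\begin{smallmatrix}0&N\\N^{T}&0\end{smallmatrix}\right)$, and it restricts to a strictly positive eigenvector of each irreducible (connected) block. By Perron--Frobenius, an irreducible non-negative matrix admitting a strictly positive eigenvector has that eigenvalue as its spectral radius, so \emph{every} component has spectral radius $\FPdim(-r_{s,t})$: equal to $2\cos(\pi/m)$ when $m<\infty$ (forcing $h_C=m$ via the $ADE$ classification), and at least $2$ when $m=\infty$ (forcing each component to be infinite). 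This localisation of the Perron eigenvector to blocks is precisely what your global spectral bound misses, and it appears to bypass the quantum-number positivity lemma entirely. If you pursue it, be careful that it only addresses strong admissibility \emph{given} a geometric $R$-realisation --- the existence and well-definedness of such realisations for non-categorifiable $R$ is the separate \cref{conj:fusioncatsuperfluous}, which your argument does not touch.
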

Note that by \cref{prop:foldingresults}, the conjecture above would also imply the group theoretical results stated in \cref{cor:Coxgroupinjection} and \cref{prop:finiteiffunfoldedfinite}.

\printbibliography

\end{document}